\documentclass[10pt]{mystyle}


\title[Localization in Statistical Learning]{Localization of VC Classes: Beyond Local Rademacher Complexities}
\usepackage{times}
\usepackage{bbm}
\usepackage{mathtools}
\usepackage{comment}

 \newcommand{\E}{\mathbb{E}} 
 \renewcommand{\P}{\mathbb{P}}
  
 \newcommand{\Ind}{\mathbbm{1}} 
 \newcommand{\F}{\mathcal{F}}
  \newcommand{\G}{\mathcal{G}}
 \newcommand{\sign}{\text{sign}}
  \newcommand{\dis}{\text{DIS}}
    \newcommand{\mstar}{\text{star}}
\newcommand{\mycomment}[1]{}
\newcommand{\Prob}{\mathsf{P}}
\bibpunct{[}{]}{,}{n}{,}{,}




 \author{\Name{Nikita Zhivotovskiy} \Email{nikita.zhivotovskiy@phystech.edu}\\
 \addr Moscow Institute of Physics and Technology and Institute for Information Transmission Problems, Moscow, Russia
 \AND
 \Name{Steve Hanneke} \Email{steve.hanneke@gmail.com} 
 }

\begin{document}

\maketitle

\begin{abstract}
In statistical learning the excess risk of empirical risk minimization (ERM) is controlled by $\left(\frac{\text{COMP}_{n}(\F)}{n}\right)^{\alpha}$, where $n$ is a size of a learning sample, $\text{COMP}_{n}(\F)$ is a complexity term associated with a given class $\F$ and $\alpha \in [\frac{1}{2}, 1]$ interpolates between slow and fast learning rates. In this paper we introduce an alternative localization approach for binary classification that leads to a novel complexity measure: fixed points of the local empirical entropy. We show that this complexity measure gives a tight control over $\text{COMP}_{n}(\F)$ in the upper bounds under bounded noise. Our results are accompanied by a minimax lower bound that involves the same quantity. In particular, we practically answer the question of optimality of ERM under bounded noise for general VC classes. \end{abstract}

\begin{keywords}
statistical learning, PAC learning, local metric entropy, local Rademacher process, shifted empirical process, offset Rademacher process, ERM, Alexander's capacity, disagreement coefficient, Massart's noise condition 
\end{keywords}

\section{Introduction}
Since the early days of statistical learning theory understanding of the generalization abilities of empirical risk minimization has been a central question.  In 1968, Vapnik and Chervonenkis \cite{vapnik68} introduced the combinatorial property of classes of classifiers which we now call the \emph{VC dimension}, which plays a crucial role not only in statistics but in many other areas of mathematics.  By now it is strongly believed that the VC-dimension fully characterizes the properties of the empirical risk minimization algorithm. For example, when no restrictions are made on the distributions one can prove that the probability of error of the minimizer of empirical risk is close to the probability of error of the best classifier in the class, up to a term of order $\sqrt{\frac{d}{n}} + \sqrt{\frac{\log(\frac{1}{\delta})}{n}}$, with probability at least $1 - \delta$, where $d$ is the VC dimension of the class and $n$ is the sample size. One can also prove a minimax lower bound (valid for any learning procedure) matching up to absolute constants. But the fact that VC dimension alone describes the complexity term appears to be true only in the agnostic case, when no assumptions are made on the labelling mechanism.  It was noticed several times in the literature, that when considering bounded noise, VC dimension alone is not a right complexity measure of ERM \cite{Massart06,Raginsky11, Hanneke15a}. Until now an exact right complexity measure has only been identified for a few specific classes. In this paper we propose a complexity measure which
provides upper bounds on the risk of ERM, as well as lower bounds under regularity conditions,
and therefore represents the right complexity measure for ERM in these cases. 

In the last twenty years many efforts were made to understand the conditions that imply fast $\frac{1}{n}$ convergence rates, instead of slow $\frac{1}{\sqrt{n}}$ rates. By now these conditions are well understood; we refer for example to van Erven et al. \cite{VanErven15} for an extensive survey and related results. At the beginning of the 2000s, so-called \emph{localized} complexities (Bartlett et al. \cite{Bartlett05}, Koltchinskii \cite{Koltch06}) were introduced to statistical learning and became popular techniques for proving $\frac{1}{n}$ rates in different scenarios. But in addition to better rates, localization means that \emph{only a small vicinity of the best classifier} really affects the learning complexity. Almost fifty years after the introduction of VC theory this phenomenon is still not fully understood and studied. Specifically, we lack tight error bounds based on localization and expressed in terms of intuitively-simple and calculable combinatorial properties of the class.  Existing approaches based on localization (mainly, via \emph{local Rademacher complexities}) are typically difficult to calculate directly, and the simpler relaxations of these bounds in the literature use localization merely to gain improvements due to the \emph{noise conditions}, but fail to maintain the important improvements due to the local \emph{structure of the function class} (i.e., localization of the complexity term in the bound). Moreover, to the best of our knowledge, in classification literature there are no known general minimax lower bounds in terms of localized processes.

There does exist one line of results which simultaneously give fast convergence rates and perform direct localization of a class of classifiers, to arrive at simple generalization bounds. Specifically, Massart and N\'ed\'elec \cite{Massart06} proved that under Massart's bounded noise condition, generalization of order $\frac{d}{nh}\log(\frac{nh^2}{d}) + \frac{\log(\frac{1}{\delta})}{nh}$ is possible, where $h$ is a margin parameter responsible for the noise level. To derive this bound, Massart and N\'{e}d\'{e}lec use a localized analysis to obtain improved rates under these noise conditions.  However, the bound does not reflect this localization in the \emph{complexity term} itself: in this case, the factor $d\log(\frac{nh^2}{d})$. Gin\'e and Koltchinskii \cite{Gine06} refined this bound, establishing generalization of order $\frac{d}{nh}\log(\tau(\frac{d}{nh^2})) + \frac{\log(\frac{1}{\delta})}{nh}$ for empirical risk minimization, where $\tau$ is a distribution-dependent quantity they refer to as \emph{Alexander's capacity function} (from the work of Alexander in the 80s \cite{Alexander87}). Very recently, Hanneke and Yang \cite{Hanneke15} introduced a novel combinatorial parameter $\mathbf{s}$, called the \emph{star number}, which gives perfectly-tight distribution-free control on $\tau(\frac{d}{nh^2})$, and generally cannot be upper bounded in terms of the VC dimension.  Thus (as noted by Hanneke \cite{Hanneke15a}), in terms of distribution-free guarantees on the generalization of empirical risk minimization, the implication of Gin\'e and Koltchinskii's result is a bound $\frac{d}{nh}\log(\mathbf{s} \land \frac{nh^2}{d}) + \frac{\log(\frac{1}{\delta})}{nh}$.  However, this bound is sometimes suboptimal. In this paper we will give a new argument showing potential gaps of this bound.

The aim of this paper is to perform a tight distribution-free localization for VC classes under bounded noise by introducing an appropraite distribution-free complexity measure, thus resolving the existing gap between upper and lower bounds.  The complexity measure is a localized empirical entropy measure: essentially, a fixed point of the local empirical entropy. Most of the results will be proved in expectation and in deviation. Although results in expectation can usually be derived by integrating the results in deviation, we will directly prove results in expectation in the main part of the paper. Proofs of standard technical propositions and some results in deviation will be moved to the appendix. This paper is organized as follows:
\begin{itemize}
\item In section $2$ we introduce the notation, definitions and previous results.
\item In section $3$ we introduce and further develop the machinery, based on the combination of shifted empirical processes \cite{Lecue12} and offset Rademacher complexities \cite{Liang15}.  We also obtain a new upper bound on the error rate of empirical risk minimization in the realizable case, involving the star number and the growth function, which refines a recent result of Hanneke \cite{Hanneke15a} in some cases; this bound is a strict improvement over the distribution-free bound implied by the result of Gin\'{e} and Koltchinskii in the realizable case.

\item Section $4$ is devoted to an upper bound in terms of fixed point of global metric entropy. Although it gives a fast convergence rate $\frac{1}{n}$, it involves only a global information about the class. Thus, this bound is suboptimal in some interesting cases, as are the other bounds in the literature based solely on global complexities for the class.  We include the proof nevertheless, as it cleanly illustrates certain aspects of our approach; for simplicity, we only present this result in the realizable case.
 \item Section $5$ contains our main results. In this section we introduce the local empirical entropy and prove that fixed points of local empirical entropy control the complexity of ERM under bounded noise. 
\item Section $6$ is devoted to a novel lower bound in terms of fixed points of local empirical entropy under mild regularity assumptions.
\item Section $7$ contains examples of values of fixed points for some standard classes.
\item Section $8$ is devoted to discussions and some related general results. Specifically, we prove that bounds based on our complexity measure are always not worse than the bounds based on local Rademacher complexities.
\end{itemize}

\section{Notation and Previous Results}
We define the \emph{instance space} $\mathcal{X}$ and the \emph{label space} $\mathcal{Y} = \{1, -1\}$. We assume that the set $\mathcal{X} \times \mathcal{Y}$ is equipped with some $\sigma$-algebra and a probability measure $P$ on measurable subsets is defined. We also assume that we are given a set of classifiers $\F$; these are measurable functions with respect to the introduced $\sigma$-algebra, mapping $\mathcal{X}$ to $\mathcal{Y}$. We may always decompose $P = P_{X}\times P_{Y|X}$. The risk of a classifier $f$ is its probability of error, denoted $R(f) = P(f(X) \neq Y)$. It is known that among all functions the \emph{Bayes classifier} $f^{*}(x) = \sign(\eta(x))$, where $\eta(x) = \E[Y|X = x]$, minimizes the risk \cite{Devr95}. Symbol $\wedge$ will denote minimum of two real numbers, $\vee$ will denote maximum of two real numbers and $\Ind[A]$ will denote an indicator of the event $A$. For any subset $B \subseteq \F$ define the \emph{region of disagreement} as $\dis(B) = \{x \in \mathcal{X}|\ \exists f, g \in B\ \text{s. t.}\ f(x) \neq g(x)\}$. We will also consider abstract real-valued functional classes, which will usually be denoted by $\G$. We will slightly abuse the notation and by $\log(x)$ always mean truncated logarithm: $\ln(\max(x, e))$. The notation $f(n) \lesssim g(n)$ or $g(n) \gtrsim f(n)$ will mean that for some universal constant $c>0$ it holds that $f(n) \le cg(n)$ for all $n \in \mathbb{N}$. Similarly,  we introduce $f(n) \simeq g(n)$ to be equivalent to $g(n) \lesssim f(n) \lesssim g(n)$. 

A \emph{learner} observes $\left((X_{1}, Y_{1}), \ldots, (X_{n}, Y_{n})\right)$, an i.i.d. training sample from an unknown distribution $P$. Also denote $Z_i = (X_i,Y_i)$ and $\mathcal{Z} = \mathcal{X} \times \mathcal{Y}$. By $P_{n}$ we will denote expectation with respect to the empirical measure (empirical mean) induced by these samples. \emph{Empirical risk minimization} (ERM) refers to any learning algorithm with the following property: given a training sample, it outputs a classifier $\hat{f}$ that minimizes $R_{n}(f) = P_{n}\Ind[f(X) \neq Y]$ among all $f \in \F$. Depending on context we will usually refer to $\hat{f}$ as an empirical risk minimizer and use the same abbreviation.
At times we also refer to a \emph{ghost sample}, which is another $n$ i.i.d. $P$-distributed samples, 
independent of the training sample, and we denote by $P'_{n}$ the empirical mean with respect to the ghost sample.
We say a set $\{x_1, \ldots, x_k\} \in \mathcal{X}^{k}$ is shattered by $\F$ if there are $2^k$ distinct classifications of $\{x_1, \ldots, x_k\}$ realized by classifiers in $\F$. The \emph{VC dimension} of $\F$ is the largest integer $d$ such that there exists a set $\{x_1, \ldots, x_d\}$ shattered by $\F$ \cite{vapnik68}. We define the \emph{growth function} $\mathcal{S}_{\mathcal{F}}(n)$ as the maximum possible number of different classifications of a set of $n$ points realized by classifiers in $\F$ (maximized over the choice of the $n$ points).
Throughout the paper $n$ will always denote the size of the training sample, $d$ will denote the VC dimension, and $\hat{f}$ will denote the output of any ERM algorithm.  To focus on nontrivial scenarios, we will always suppose $d \geq 1$.
In what follows we adopt the assumption that the events appearing in probability claims below are measurable.
\begin{definition}[Massart and N\'ed\'elec \cite{Massart06}]
$(P, \F)$ is said to satisfy Massart's bounded noise condition if $f^{*} \in \F$ and for some $h \in [0, 1]$ it holds $|\eta(X)| \ge h$ with probability $1$. This constant $h$ is referred to as the \emph{margin parameter}.
\end{definition}
For any $\F$, the set of all corresponding distributions satisfying Massart's bounded noise condition will be denoted by $\mathcal{P}(h, \F)$. The case $h = 1$ corresponds to the so-called \emph{realizable case}, where $Y = f^{*}(X)$ almost surely, and $h = 0$ corresponds to a well-specified (i.e., $f^* \in \F$) noisy case. The following result is classic \cite{Dudley84, Talagrand94,Boucheron05}. Let $\F$ be a class with VC-dimension $d$. For any empirical risk minimizer $\hat{f}$ over $n$ samples, for any $P \in \mathcal{P}(0,\F)$, with probability at least $1 - \delta$,
\[
R(\hat{f}) - R(f^{*}) \lesssim \sqrt{\frac{d}{n}} + \sqrt{\frac{\log(\frac{1}{\delta})}{n}}.
\]
Moreover, the following lower bound exists for an output $\tilde{f}$ of \emph{any} algorithm based on $n$ samples: there exists $P \in \mathcal{P}(0,\F)$ such that, with probability greater than $1 - \delta$,
\[
R(\tilde{f}) - R(f^{*}) \gtrsim \left(\sqrt{\frac{d}{n}} + \sqrt{\frac{\log(\frac{1}{\delta})}{n}}\right) \wedge 1.
\]
Thus we know that the VC-dimension is the right complexity measure for empirical risk minimization, and indeed for optimal learning, when no restrictions are made on the  probability distribution. Interestingly, this is not generally the case when $h > 0$. In this paper, we find this yet unknown essentially correct complexity measure, when $h$ is bounded away from $0$ and $1$.
But first, we review a refinement to the above bound for the case $h > 0$, due to Gin\'e and Koltchinskii \cite{Gine06}.  Specifically, consider the following definition.
\begin{definition}
\label{def:alexander}
For $\varepsilon_{0} > 0$ fix a set $\F_{\varepsilon_{0}} = \bigl\{f \in \F: P_{X}\bigl(f(X) \neq f^{*}(X)\bigr)\leq \varepsilon_0\bigr\}$. 
For $\varepsilon \in (0,1]$ define 
\[
\tau(\varepsilon) = 
\sup\limits_{\varepsilon_{0} \ge \varepsilon}
\frac{P_{X}\{x \in \mathcal{X} \colon \exists f\in \F_{\varepsilon_{0}} \text{ s.t. } f(x) \neq f^{*}(x)\}}
{\varepsilon_{0}} \lor 1.
\]
\end{definition}
This quantity (essentially\footnote{The original definition did not include the supremum over $\varepsilon_{0}$, instead taking $\varepsilon_{0}=\varepsilon$ directly.  However, the results were proven under a very restrictive monotonicity assumption. Taking the supremum allows one to dispense with such assumptions.}) was introduced to the empirical processes literature by Alexander \cite{Alexander87}, and is referred to as \emph{Alexander's capacity} by Gin\'e and Koltchinskii \cite{Gine06}.  The same quantity appeared independently in the literature on active learning, where it is referred to as the \emph{disagreement coefficient} \cite{Hanneke07,Hanneke14}.  $\tau(\varepsilon)$ is a distribution-dependent measure of the diversity of ways in which classifiers in a relatively small vicinity of $f^{*}$ can disagree with $f^{*}$. Gin\'e and Koltchinskii \cite{Gine06} gave the following upper bound. Let $\F$ be a class of VC dimension $d$, and $\hat{f}$ the classifier produced by an ERM based on $n$ training samples. For any probability measure $P \in \mathcal{P}(h, \F)$, with probability at least $1-\delta$,
\begin{equation}
\label{koltchbound}
R(\hat{f}) - R(f^{*}) \lesssim \frac{d}{nh}\log\left(\tau\left(\frac{d}{nh^2}\right)\right) + \frac{\log(\frac{1}{\delta})}{nh}.
\end{equation}
This bound is the best simple, easily calculable upper bound known so far for ERM in the case of binary classification under Massart's bounded noise condition. The proof of this bound is based on the analysis of the localized Rademacher processes. So we may also consider this result as the best relaxation of the local Rademacher analysis.

Recently, Hanneke and Yang \cite{Hanneke15} introduced a distribution-free complexity measure, called the \emph{star number}, which perfectly captures the worst case value for Alexander's capacity.  It is defined as follows.
\begin{definition}
The star number $\mathbf{s}$ is the largest integer such that there exist distinct $x_{1}, \ldots, x_{\mathbf{s}}\! \in\! \mathcal{X}$ and $f_{0}, f_{1}, \ldots, f_{\mathbf{s}} \in \F$ such that, for all $i\! \in\! \{1, \ldots, \mathbf{s}\}$, $\dis(\{f_{0}, f_{i}\}) \cap \{x_{1}, \ldots, x_{\mathbf{s}}\} = \{x_i\}$.
\end{definition}
Similar to Alexander's capacity, the star number describes how diverse the small-size disagreements with a fixed classifier $f_{0}$ can be. In terms of the one-inclusion graph studied by Haussler, Littlestone, and Warmuth \cite{Haussler94}, the star number may be described as the maximum possible degree in the
data-induced one-inclusion graph. It is easy to see that, for any class of VC dimension $d$, it always holds that $d \le \mathbf{s}$, but the difference may be as large as inifinte. We refer to \cite{Hanneke15} for examples and further discussions related to the star number. One of the most interesting results about this value is its connection with the worst case of Alexander's capacity. The paper of Hanneke and Yang contains the following equality
\begin{equation}
\sup\limits_{f^{*} \in \F} \sup\limits_{P_{X}}\tau(\varepsilon) = \mathbf{s} \wedge \frac{1}{\varepsilon}.
\end{equation}
As noted by Hanneke \cite{Hanneke15a}, an immediate corollary of this and \eqref{koltchbound} is that, for any $P \in \mathcal{P}(h,\F)$, with probability at least $1-\delta$,
\begin{equation}
\label{Ginecorollary}
R(\hat{f}) - R(f^{*}) \lesssim \frac{d}{nh}\log\left(\frac{nh^2}{d} \wedge \mathbf{s}\right) + \frac{\log(\frac{1}{\delta})}{nh}.
\end{equation}
In particular, in the realizable case (when $h = 1$), with probability at least $1-\delta$, 
\begin{equation*}
R(\hat{f}) \lesssim \frac{d}{n}\log\left(\frac{n}{d} \wedge \mathbf{s}\right) + \frac{\log(\frac{1}{\delta})}{n}.
\end{equation*}
Since $\mathbf{s}$ controls Alexander's capacity with equality, there is no room for any kind of improvement using the bound of Gin\'e and Koltchinskii if we consider distribution-free upper bounds. However, the above bound for the realizable case has recently been refined by Hanneke \cite{Hanneke15a}, establishing that for any $P \in \mathcal{P}(1,\F)$, with probability at least $1-\delta$, 
\begin{equation}
\label{s-over-d-bound}
R(\hat{f}) \lesssim \frac{d}{n}\log\left(\frac{n}{d} \wedge \frac{\mathbf{s}}{d}\right) + \frac{\log(\frac{1}{\delta})}{n}.
\end{equation}
Even this slight improvement indicates the suboptimality of the bound \eqref{Ginecorollary}. In this paper we will further refine this bound and discuss in details the following fact: the pair $d, \mathbf{s}$ alone is not a right complexity measure for the VC classes when $h$ is bounded away from zero.

\section{Preliminaries from Empirical Processes}

Given a function class $\G$ mapping $\mathcal{Z}$ to $\mathbb{R}$, one may consider the supremum of the empirical process:
\[
\sup\limits_{g \in \G}\left(P - P_{n}\right)g.
\]
This quantity plays an important role in statistical learning theory. Since the pioneering paper of Vapnik and Chervonenkis \cite{vapnik68}, the analysis of learning algorithms is usually performed by the tight uniform control over the process $\left(P - P_{n}\right)g$ for a special class of functions. The behaviour of the supremum of this empirical process is tightly connected with the supremum of the so-called \emph{Rademacher process}:
\[
\frac{1}{n}\E_{\varepsilon}\sup\limits_{g \in \G}\left(\sum\limits_{i = 1}^{n}\varepsilon_{i}g_{i}\right),
\]
where $g_{i}$ denotes $g(Z_{i})$, $\varepsilon_{i}$ are independent Rademacher variables taking values $\pm1$ with equal probabilities, and $\E_{\varepsilon}$ denoted the expectation over the $\varepsilon_i$ random variables (conditioning on the $Z_i$ variables). This approach, however, usually leads to suboptimal upper and lower bounds that are not capturing both improved learning rates due to the noise conditions and the localization of the complexity term.

We will instead consider different quantities, so-called \emph{shifted empirical} processes, introduced by Lecu\'e and Mitchell \cite{Lecue12}. Given $c > 0$, we consider 
\[
\sup\limits_{g \in \G}\left(P - (1 + c)P_{n}\right)g.
\]
The second important quantity is an expected supremum of the \emph{offset Rademacher process}, introduced recently by Liang, Rakhlin, and Sridharan \cite{Liang15}:
\[
\frac{1}{n}\E_{\varepsilon}\sup\limits_{g \in \G}\left(\sum\limits_{i = 1}^{n}\varepsilon_{i}g_{i}- c'g^2_{i}\right).
\]
The last quantity was introduced for the analysis of a specific aggregation procedure under the square loss and so far has not been related to a shifted process  \footnote{We should note that shifted processes and related techniques appeared independently earlier in the paper of Wegkamp \cite{Wegkamp03}. He uses the term \emph{desymmetrized} empirical processes for the shifted processes.}. In this paper, we will investigate some new properties of these processes and show how they may be applied in the classification framework. 
The following short lemma appears in a more general form in \cite{Liang15} (Lemma $5$).
\begin{lemma}
\label{expectmax}
Let $V \subset \{-1, 0, 1\}^{n}$ be a finite set of vectors of cardinality $N$. Then for any $c > 0$, 
\[
\frac{1}{n}\E_{\varepsilon}\max\limits_{v \in V}\left(\sum\limits_{i = 1}^{n}\varepsilon_{i}v_{i}- c|v_{i}|\right) \le \frac{1}{2c}\frac{\log(N)}{n}.
\]
\end{lemma}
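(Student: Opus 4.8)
The plan is to bound the maximum by a union-bound / Laplace-transform argument, using the fact that for each fixed vector $v \in V$ the random variable $\sum_i \varepsilon_i v_i$ is a sum of independent bounded symmetric terms, while the subtracted term $c\sum_i |v_i|$ exactly cancels the variance proxy. First I would fix $\lambda > 0$ and write, by Jensen's inequality applied to $x \mapsto e^{\lambda x}$,
\[
\exp\left(\lambda \, \E_{\varepsilon}\max_{v \in V}\Bigl(\sum_{i=1}^n \varepsilon_i v_i - c|v_i|\Bigr)\right)
\le \E_{\varepsilon}\exp\left(\lambda \max_{v \in V}\Bigl(\sum_{i=1}^n \varepsilon_i v_i - c|v_i|\Bigr)\right)
\le \sum_{v \in V} \E_{\varepsilon}\exp\left(\lambda\sum_{i=1}^n\bigl(\varepsilon_i v_i - c|v_i|\bigr)\right).
\]
Then I would factor the expectation over the independent coordinates: for each $i$, $\E_{\varepsilon_i}\exp(\lambda \varepsilon_i v_i) = \cosh(\lambda v_i) \le \exp(\lambda^2 v_i^2 / 2)$. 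Since $v_i \in \{-1,0,1\}$ we have $v_i^2 = |v_i|$, so the $i$-th factor is at most $\exp(\lambda^2 |v_i|/2 - \lambda c |v_i|) = \exp\bigl((\lambda^2/2 - \lambda c)|v_i|\bigr)$.

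The key step is now the choice $\lambda = c$, which makes the exponent $(\lambda^2/2 - \lambda c)|v_i| = -(c^2/2)|v_i| \le 0$ for every $i$, so the product over $i$ is at most $1$ for each $v$. Hence the right-hand side is at most $N$, and taking logarithms and dividing by $\lambda n = cn$ gives
\[
\frac{1}{n}\E_{\varepsilon}\max_{v \in V}\Bigl(\sum_{i=1}^n \varepsilon_i v_i - c|v_i|\Bigr) \le \frac{\log N}{cn}.
\]
This is off from the stated bound by a factor of $2$; to recover the constant $\tfrac{1}{2c}$ one should instead keep $\lambda$ free, obtaining the bound $\frac{\log N}{\lambda n} + \frac{(\lambda/2 - c)_+ \cdot (\text{something})}{\dots}$, and optimize — more cleanly, one uses the sharper sub-Gaussian constant available here: since each $\varepsilon_i v_i$ is symmetric and supported in $[-|v_i|,|v_i|]$ with $|v_i|\in\{0,1\}$, and since we are subtracting $c|v_i|$ rather than $c v_i^2$ with a free normalization, one picks $\lambda = 2c$, giving exponent $(\lambda^2/2 - \lambda c)|v_i| = (2c^2 - 2c^2)|v_i| = 0$, product $\le 1$, and final bound $\frac{\log N}{2cn}$.

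The only mild obstacle is bookkeeping the constant: one must be careful that $\cosh(t) \le e^{t^2/2}$ (rather than a weaker constant) is what is used, and that $v_i^2 = |v_i|$ is exploited so the quadratic-in-$\lambda$ term scales with $|v_i|$, matching the subtracted term; then $\lambda = 2c$ is forced. Everything else — Jensen, the union bound over the finite set $V$, and independence across coordinates — is routine. I expect no real difficulty; the lemma is essentially a one-line Chernoff bound once the identity $v_i^2=|v_i|$ on $\{-1,0,1\}$ is noticed, and indeed the statement is quoted from \cite{Liang15} in a more general form.
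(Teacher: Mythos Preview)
Your argument is correct, and in particular the choice $\lambda = 2c$ together with $v_i^2 = |v_i|$ on $\{-1,0,1\}$ is exactly what makes the quadratic and linear terms cancel, yielding the constant $\tfrac{1}{2c}$. The paper does not supply its own proof of this lemma; it simply cites Lemma~5 of \cite{Liang15}, whose proof is precisely the Laplace-transform / union-bound computation you carry out. So your proposal matches the intended argument.

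One small presentational remark: the detour through $\lambda = c$ is unnecessary and slightly confusing; once you have the per-coordinate bound $\exp\bigl((\lambda^2/2 - \lambda c)|v_i|\bigr)$, it is clear that $\lambda = 2c$ is the largest choice keeping the exponent nonpositive, and hence the one that minimizes $\frac{\log N}{\lambda n}$ subject to the constraint. Also note the paper's convention that $\log$ is the truncated logarithm $\ln(\max(\cdot,e))$; since your computation gives $\frac{\ln N}{2cn} \le \frac{\log N}{2cn}$, the stated bound follows.
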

Compare this result with an upper bound for Rademacher averages \cite{Boucheron05} where the best rate is of order $\sqrt{\frac{\log(N)}{n}}$. The next simple lemma is a new symmetrization lemma for the shifted process in expectation.
\begin{lemma}[Shifted symmetrization in expectation]
\label{symmetrization}
Let $\G$ be a functional class and $c \ge 0$ an absolute constant. Then
\[
\E\sup\limits_{g \in \G}((P - (1 + c)P_n){g}) \le \frac{c + 2}{n}\E\E_{\varepsilon}\sup\limits_{g \in \G}\left(\sum\limits_{i = 1}^{n}\varepsilon_{i}g(Z_{i})- \frac{c}{c + 2}g(Z_{i})\right).
\]
\end{lemma}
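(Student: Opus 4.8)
The plan is to run the classical ghost-sample symmetrization argument, keeping track of the asymmetry created by the factor $(1+c)$ and absorbing it into a deterministic linear shift. First I would bring in the ghost sample $Z_1',\dots,Z_n'$ with empirical mean $P'_n$, so that $Pg = \E_{Z'}[P'_n g]$ for every fixed $g$. Holding the training sample fixed, $(1+c)P_n g$ is a constant under $\E_{Z'}$, hence $Pg-(1+c)P_n g = \E_{Z'}\bigl[(P'_n-(1+c)P_n)g\bigr]$; taking $\sup_g$, using $\sup_g\E_{Z'}[\cdot]\le\E_{Z'}\sup_g[\cdot]$, and then taking $\E$ over the training sample yields
\[
\E\sup_{g\in\G}\bigl((P-(1+c)P_n)g\bigr)\le\E\E_{Z'}\sup_{g\in\G}\frac1n\sum_{i=1}^n\bigl(g(Z_i')-(1+c)g(Z_i)\bigr).
\]

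Second, I would symmetrize over the $n$ exchangeable pairs $(Z_i,Z_i')$: for any fixed sign vector $\varepsilon\in\{-1,1\}^n$, swapping $Z_i$ with $Z_i'$ exactly when $\varepsilon_i=-1$ preserves the joint law (each pair is i.i.d.\ from the symmetric product measure $P\times P$), so the bound above is unchanged after also averaging over a uniform Rademacher vector $\varepsilon$. The key algebraic step is the closed-form expression for the swapped $i$-th summand: for $\varepsilon_i\in\{-1,1\}$ the quantity
\[
\tfrac{c+2}{2}\,\varepsilon_i\bigl(g(Z_i')-g(Z_i)\bigr)-\tfrac{c}{2}\bigl(g(Z_i')+g(Z_i)\bigr)
\]
equals $g(Z_i')-(1+c)g(Z_i)$ when $\varepsilon_i=1$ and $g(Z_i)-(1+c)g(Z_i')$ when $\varepsilon_i=-1$; this is a two-point interpolation that pins down the coefficients $\tfrac{c+2}{2}$ and $-\tfrac{c}{2}$. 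Substituting this identity and taking $\E_\varepsilon$ first, the right-hand side becomes $\E\E_{Z'}\E_\varepsilon$ of the supremum over $g$ of $\frac1n\sum_i\bigl(\tfrac{c+2}{2}\varepsilon_i(g(Z_i')-g(Z_i))-\tfrac{c}{2}(g(Z_i')+g(Z_i))\bigr)$.

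Third, I would split each summand into its $Z_i'$-part and its $Z_i$-part and apply subadditivity of the supremum, $\sup_g(A_g+B_g)\le\sup_g A_g+\sup_g B_g$. Both resulting terms have the same expectation: the $Z_i'$-term equals $\E\E_\varepsilon\sup_g\frac1n\sum_i\bigl(\tfrac{c+2}{2}\varepsilon_i g(Z_i)-\tfrac{c}{2}g(Z_i)\bigr)$ after renaming $Z_i'\to Z_i$, and the $Z_i$-term (which carries $-\varepsilon_i$) equals the same quantity after replacing $-\varepsilon_i$ by $\varepsilon_i$, which is legitimate since $\varepsilon_i$ is symmetric. Their sum is $\E\E_\varepsilon\sup_g\frac1n\sum_i\bigl((c+2)\varepsilon_i g(Z_i)-c\,g(Z_i)\bigr)$, i.e.\ exactly $\frac{c+2}{n}\E\E_\varepsilon\sup_{g\in\G}\sum_i\bigl(\varepsilon_i g(Z_i)-\tfrac{c}{c+2}g(Z_i)\bigr)$, the claimed bound.

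I expect the only genuinely nonroutine ingredient to be the second step: guessing the correct decomposition of the asymmetric swap into a Rademacher-linear term plus a deterministic centering shift (the $-\tfrac{c}{2}(g(Z_i')+g(Z_i))$ piece), and then tracking signs carefully through the subadditivity split so nothing is lost. The rest is bookkeeping parallel to textbook symmetrization, and the measurability caveat already adopted in the paper covers the $\sup$-over-$g$ manipulations; as a sanity check, setting $c=0$ recovers the usual factor-$2$ symmetrization inequality.
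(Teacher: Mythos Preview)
Your proof is correct and follows essentially the same route as the paper: introduce the ghost sample via Jensen, rewrite $P'_n g - (1+c)P_n g = (1+c/2)(P'_n - P_n)g - (c/2)(P'_n + P_n)g$ (which is exactly your two-point interpolation), then split the supremum and use the symmetry of the Rademacher signs and the exchangeability of $(Z_i,Z'_i)$ to collapse both pieces into the same term. The paper's write-up is more compressed, but the decomposition and the bookkeeping are identical.
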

\begin{proof}
Proof technique is inspired by the proof of Theorem $3$ in \cite{Liang15}. Using standard symmetrization trick and Jensen's inequality we have
\begin{align*}
&\E\sup\limits_{g \in \G}((P - (1 + c)P_n){g})
\\
& \le \E\sup\limits_{g \in \G}\left(P'_{n}g - (1 + c)P_{n}g \right) 
\\
& = \E\sup\limits_{g \in \G}((1 + c/2)(P'_{n}g - P_{n}g) - cP'_{n}g/2 - cP_{n}g/2 )
\\
& \le 2\E\E_{\varepsilon}\sup\limits_{g \in \G}\left(\frac{1+c/2}{n}\sum\limits_{i = 1}^{n}\varepsilon_{i}g(Z_i) - cP_{n}g/2\right)  
\\
& = 2(1 + c/2)\E\E_{\varepsilon}\sup\limits_{g \in \G}\left(\frac{1}{n}\sum\limits_{i = 1}^{n}\varepsilon_{i}g(Z_i) - \frac{c/2}{1 + c/2}P_{n}g\right).
\end{align*}
\end{proof}
Interestingly, by setting $c = 0$ we immediately obtain the standard symmetrization inequality. The next lemma, which provides a novel symmetrization tool for the shifted processes in deviation requires the following definition. This result is motivated by existing classic symmetrization results \cite{Boucheron05,vapnik68}, but the proof technique is adapted for our shifted case. We say that a functional class $\G$ is a $(B, \beta)$-\emph{Bernstein class} if for any $g \in \G$ we have $P g^{2} \le B \left(P g \right)^{\beta}$. The parameter $\beta$ is called the \emph{Bernstein parameter} and $B$ the \emph{Bernstein constant}.
\begin{lemma}[Shifted symmetrization in deviation]
\label{symmetrizationdev}
Let $\G$ be a $(B, 1)$-Bernstein class, such that for all $g \in \G$ we have $Pg \ge 0$.
Fix constants $c_{1} > c_{2} > 0$. If $nt \ge \frac{B(1 + c_{2})^{2}}{c_{2}}$, then
\begin{equation*}
\Prob\left(\sup\limits_{g \in \G}(P - (1 + c_1)P_{n})g \ge t\right) \le 2\Prob\left(\sup\limits_{g \in \G}((1 + c_{2})P'_{n} - (1 + c_{1})P_{n})g \ge t/2\right).
\end{equation*}
\end{lemma}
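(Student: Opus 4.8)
The plan is to run a ghost-sample symmetrization in the spirit of Vapnik and Chervonenkis, but carrying the two asymmetric shift coefficients through the argument and replacing the usual range/second-moment control by the Bernstein hypothesis. Write $A=\{\sup_{g\in\G}(P-(1+c_1)P_n)g\ge t\}$ for the event bounded on the left, and on $A$ fix a measurable selection $g^\star=g^\star(Z_1,\ldots,Z_n)\in\G$ with $(P-(1+c_1)P_n)g^\star\ge t$ (a near-maximizer if the supremum is not attained, which is a routine nuisance under the standing measurability assumption). It then suffices to show that, conditionally on the training sample, the event $\{((1+c_2)P'_n-(1+c_1)P_n)g^\star\ge t/2\}$ has probability at least $1/2$ on $A$: for then $\Prob\bigl(\sup_{g\in\G}((1+c_2)P'_n-(1+c_1)P_n)g\ge t/2\bigr)\ge\tfrac12\Prob(A)$, which rearranges to the claim.

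For the conditional estimate, note that on $A$ we have $(1+c_1)P_n g^\star\le Pg^\star-t$, hence $((1+c_2)P'_n-(1+c_1)P_n)g^\star\ge(1+c_2)P'_n g^\star-Pg^\star+t$, so the target event contains $\{(1+c_2)P'_n g^\star\ge Pg^\star-t/2\}$, whose complement is exactly $\{Pg^\star-P'_n g^\star>s\}$ with $s=\frac{c_2Pg^\star+t/2}{1+c_2}>0$ (positivity uses $Pg^\star\ge 0$). Conditionally on the training sample, $P'_n g^\star$ is an average of $n$ i.i.d.\ copies of $g^\star(Z)$, with mean $Pg^\star$ and variance $\frac1n\bigl(Pg^{\star 2}-(Pg^\star)^2\bigr)\le\frac1n Pg^{\star 2}\le\frac{B}{n}Pg^\star$ by the $(B,1)$-Bernstein property; Chebyshev's inequality then bounds the probability of the complement by $\frac{BPg^\star/n}{s^2}=\frac{BPg^\star(1+c_2)^2}{n(c_2Pg^\star+t/2)^2}$. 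Applying the AM--GM bound $(c_2Pg^\star+t/2)^2\ge 2c_2Pg^\star t$ collapses this to $\frac{B(1+c_2)^2}{2c_2nt}$, which is at most $1/2$ precisely when $nt\ge\frac{B(1+c_2)^2}{c_2}$, i.e.\ the stated hypothesis. This yields the conditional lower bound of $1/2$ and finishes the proof.

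The argument is otherwise routine; the two points that need genuine care are design choices rather than estimates. First, the allocation of the shift matters: one must keep all of $c_1$ on the training average and put exactly $c_2$ (no more) on the ghost average, since it is the residual mass $\frac{c_2}{1+c_2}Pg^\star$ inside $s$ that absorbs the downward fluctuation of $P'_n g^\star$, and the AM--GM split of $s$ into its ``$Pg^\star$ part'' and its ``$t$ part'' is exactly what forces the threshold $nt\ge B(1+c_2)^2/c_2$. Second, the Bernstein condition is indispensable: controlling the variance of $g^\star$ only through the range of $g$ would not reproduce this threshold, whereas $Pg^{\star 2}\le B\,Pg^\star$ is precisely the input Chebyshev needs. (The hypothesis $c_1>c_2$ is not used for this direction; it is what makes the conclusion useful later, since the $(1+c_2)$-shifted ghost average must still dominate the more heavily shifted $(1+c_1)$-training average when the right-hand side is subsequently related to an offset Rademacher process.)
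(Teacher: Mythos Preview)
Your proof is correct and follows essentially the same route as the paper's: fix the (near-)maximizer on the training sample, reduce to showing that the ghost-sample event has conditional probability at least $1/2$, and bound the complementary one-sided deviation of $P'_n g^\star$ via Chebyshev combined with the $(B,1)$-Bernstein inequality and the AM--GM bound $(c_2 Pg^\star + t/2)^2 \ge 2c_2 t\, Pg^\star$. Your closing observation that $c_1>c_2$ is not actually needed for this lemma (only for its downstream use) is accurate and worth keeping.
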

\begin{proof}
Given a random sample let $\tilde{g}$ be the function achieving the supremum.
\begin{align*}
&\Ind[(P - (1 + c_{1})P_{n})\tilde{g} > t]\Ind[(P - (1 + c_{2})P'_{n})\tilde{g} < t/2]
\\
&\le \Ind[((1 + c_{2})P'_n - (1 + c_{1})P_n)\tilde{g} > t/2].
\end{align*}
Taking expectation with respect to the ghost sample we have
\begin{align*}
&\Ind[(P - (1 + c_{1})P_{n})\tilde{g} > t]P'[(P - (1 + c_{2})P'_{n})\tilde{g} < t/2] \le 
\\
&P'[((1 + c_{2})P'_n - (1 + c_{1})P_n)\tilde{g} > t/2].
\end{align*}
We further have
\[
P'\Bigl[(P - (1 + c_{2})P'_{n})\tilde{g} \ge t/2\Bigr] =  P'\Bigl[(P - P'_{n})\tilde{g} \ge \frac{t/2 + c_{2}P\tilde{g}}{1 + c_{2}}\Bigr].
\]
Using Chebyshev inequality together with $4ab \le (a + b)^2$ we have
\[
P'\Bigl[(P - P'_{n})\tilde{g} \ge \frac{t/2 + c_{2}P\tilde{g}}{1 + c_{2}}\Bigr]\! \le\! \frac{P\tilde{g}^2(1 + c_{2})^2}{n(t/2 + c_{2}P\tilde{g})^2}\! \le\! \frac{BP\tilde{g}(1 + c_{2})^2}{2ntc_{2}P\tilde{g}}\! =\! \frac{B(1 + c_{2})^2}{2ntc_{2}}.
\]
Finally, we have that if $\frac{ntc_2}{B(1 + c_2)^2} \ge 1$, then $P'[(P - (1 + c_{2})P'_{n})\tilde{g} < t/2]\! \ge\! \frac{1}{2}$. Taking an expectation with respect to the initial sample finishes the proof.
\end{proof}
\begin{corollary}
\label{sym}
Under conditions of the previous lemma it holds
\begin{align*}
&\Prob\left(\sup\limits_{g \in \G}(P - (1 + c_1)P_{n})g \ge t\right)
\\
&\le 4\Prob\left(\sup\limits_{g \in \G}\left(\frac{1 + c'/2}{n}\sum\limits_{i = 1}^{n}\varepsilon_{i}g(Z_i) - c'P_{n}g/2\right) \ge t'/2\right),
\end{align*}
where $c' =  \frac{c_1 - c_2}{1 + c_{2}}$ and $t' = \frac{t}{2(1 + c_2)}$
\end{corollary}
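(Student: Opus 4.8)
The plan is to chain Lemma~\ref{symmetrizationdev} with one further Rademacher symmetrization applied to the desymmetrized process $\bigl((1+c_2)P'_n - (1+c_1)P_n\bigr)g$; the corollary is essentially the in-deviation counterpart of Lemma~\ref{symmetrization}, the only new wrinkle being that here the two surviving linear terms must be carried through a union bound. All hypotheses needed by Lemma~\ref{symmetrizationdev} ($(B,1)$-Bernstein, $Pg\ge 0$, and $nt \ge B(1+c_2)^2/c_2$) are exactly the standing assumptions of the corollary, so they are available for free.

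First, invoke Lemma~\ref{symmetrizationdev} to replace $\Prob\bigl(\sup_g(P-(1+c_1)P_n)g\ge t\bigr)$ by $2\,\Prob\bigl(\sup_g((1+c_2)P'_n-(1+c_1)P_n)g\ge t/2\bigr)$. Writing $a = 1+c_2$ and $b = 1+c_1$ (so $b-a = c_1-c_2 > 0$), use the identity
\[
aP'_n - bP_n = \frac{a+b}{2}(P'_n - P_n) - \frac{b-a}{2}(P'_n + P_n).
\]
For each index $i$, swapping $Z_i \leftrightarrow Z'_i$ flips the sign of $g(Z'_i)-g(Z_i)$ and leaves $g(Z'_i)+g(Z_i)$ unchanged; since $\bigl((Z_i,Z'_i)\bigr)_{i\le n}$ is a vector of i.i.d.\ exchangeable pairs, inserting i.i.d.\ Rademacher signs $\varepsilon_i$ in front of the differences does not change the law. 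Hence $\sup_{g\in\G}(aP'_n - bP_n)g$ has the same distribution as
\[
\sup_{g\in\G}\left(\frac{a+b}{2n}\sum_{i=1}^n\varepsilon_i\bigl(g(Z'_i) - g(Z_i)\bigr) - \frac{b-a}{2n}\sum_{i=1}^n\bigl(g(Z'_i) + g(Z_i)\bigr)\right),
\]
the measurable-supremum point being covered by the blanket measurability assumption (or handled via near-maximizers).

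Next, group the ghost-sample terms separately from the training-sample terms, bound the supremum of the sum by the sum of the two suprema, and apply a union bound to turn the threshold $t/2$ into $t/4$ for each piece. Because $Z'_i \overset{d}{=} Z_i$, the ghost-sample supremum $\sup_g\bigl(\frac{a+b}{2n}\sum\varepsilon_i g(Z'_i) - \frac{b-a}{2}P'_n g\bigr)$ has the law of
\[
R := \sup_{g\in\G}\left(\frac{a+b}{2n}\sum_{i=1}^n\varepsilon_i g(Z_i) - \frac{b-a}{2}P_n g\right),
\]
while the training-sample supremum $\sup_g\bigl(-\frac{a+b}{2n}\sum\varepsilon_i g(Z_i) - \frac{b-a}{2}P_n g\bigr)$ has the same law after the change of variable $\varepsilon_i \mapsto -\varepsilon_i$. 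This gives $\Prob\bigl(\sup_g(aP'_n - bP_n)g \ge t/2\bigr) \le 2\,\Prob(R \ge t/4)$, and combined with the first step it yields the constant $4$.

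Finally, renormalize: factor $a = 1+c_2$ out of the bracket defining $R$. Using $\frac{a+b}{2a} = 1 + \frac{c_1-c_2}{2(1+c_2)} = 1 + c'/2$ and $\frac{b-a}{2a} = \frac{c_1-c_2}{2(1+c_2)} = c'/2$, the event $\{R\ge t/4\}$ is precisely $\bigl\{\sup_g\bigl(\frac{1+c'/2}{n}\sum_i\varepsilon_i g(Z_i) - c'P_n g/2\bigr) \ge \frac{t}{4(1+c_2)}\bigr\}$, and $\frac{t}{4(1+c_2)} = t'/2$, which is the claimed inequality. The one step that really needs care is the symmetrization itself: one must verify that the symmetric term $\frac{b-a}{2}(P'_n + P_n)g$ passes through the insertion of the $\varepsilon_i$ unchanged and, once the antisymmetric difference is split, contributes $\frac{b-a}{2}P'_n g$ and $\frac{b-a}{2}P_n g$ to the two respective suprema; everything else is constant-chasing.
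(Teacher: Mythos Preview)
Your proof is correct and follows essentially the same route as the paper: apply Lemma~\ref{symmetrizationdev}, split the shifted difference into its symmetric and antisymmetric parts, insert Rademacher signs on the antisymmetric part, bound the supremum by the sum of two suprema with identical laws, and union-bound. The only cosmetic difference is ordering: the paper first divides through by $(1+c_2)$ to rewrite the event as $\sup_g(P'_n-(1+c')P_n)g\ge t'$ and then symmetrizes, whereas you symmetrize with $(a,b)=(1+c_2,1+c_1)$ and renormalize at the end; the arithmetic and the resulting constants are identical.
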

\begin{proof}
Under the notation we rewrite the result of Lemma \ref{symmetrizationdev} in the following form 
\[
\Prob\left(\sup\limits_{g \in \G}(P - (1 + c_1)P_{n})g \ge t\right) \le 2\Prob\left(\sup\limits_{g \in \G}(P'_{n} - (1 + c')P_{n})g \ge t'\right).
\]
Introducing Rademacher random variables we observe that $\sup\limits_{g \in \G}(P'_{n} - (1 + c')P_{n})g$ has the same distribution as 
\[
\sup\limits_{g \in \G}\left(\frac{1 + c'/2}{n}\sum\limits_{i = 1}^{n}\varepsilon_{i}(g_i - g'_i) - c'P_{n}g/2 - c'P'_{n}g/2\right).
\]
Now we have
\begin{align*}
&\sup\limits_{g \in \G}\left(\frac{1 + c'/2}{n}\sum\limits_{i = 1}^{n}\varepsilon_{i}(g_i - g'_i) - c'P_{n}g/2 - c'P'_{n}g/2\right) 
\\
&\le \sup\limits_{g \in \G}\left(\frac{1 + c'/2}{n}\sum\limits_{i = 1}^{n}\varepsilon_{i}g_i - c'P_{n}g/2\right) + \sup\limits_{g \in \G}\left(-\frac{1 + c'/2}{n}\sum\limits_{i = 1}^{n}\varepsilon_{i}g'_i  - c'P'_{n}g/2\right).
\end{align*}
Observe that both summands have the same distribution. The claim easily follows.
\end{proof}

Let $\mathbf{s}$ be the star number of a class of binary classifiers $\F$. Hanneke \cite{Hanneke15a} recently proved that in this case
\begin{equation}
\label{starlabel}
\E P_{X}(\dis(\mathcal{V}_n)) \le \frac{\mathbf{s}}{n + 1}, 
\end{equation}
where $\mathcal{V}_n = \{f \in \F|P_{n}[f(X) \neq f^{*}(X)] = 0\}$ is the \emph{version space}.  That work also established a similar result holding with high probability: with probability at least $1 - \delta$,
\begin{equation}
\label{starlabelwhp}
P_{X}(\dis(\mathcal{V}_n)) \le \frac{21\mathbf{s}}{n} + \frac{16\log(\frac{3}{\delta})}{n}. 
\end{equation}
This result means that if the star number is bounded, then in the realizable case the expected measure of disagreement of the version space has order $\frac{\mathbf{s}}{n}$, where $n$ is the size of the learning sample. A reader familiar with the work of Haussler, Littlestone, and Warmuth \cite{Haussler94} may remember that the performance of some learning algorithms can be controlled by the maximum possible out-degree in a corresponding orientation of the data-induced \emph{one-inclusion graph}, and that there exists such an orientation with maximum out-degree at most the VC dimension.  The relation of this to the present context is that (as noted by \cite{Hanneke15}) 
the star number can equivalently be defined as the largest possible value of the 
(undirected) degree of a data-induced one-inclusion graph.  Thus, instead of the 
\emph{out-degree} of an oriented data-induced one-inclusion graph, the measure of the region of 
disagreement is controlled by the largest possible value of the \emph{(undirected) degree} 
of the data-induced one-inclusion graph.

Since both the ERM and the optimal classifier are contained in $\mathcal{V}_n$ in the realizable case, one consequence of the above results is that, when $\mathbf{s} \approx d$, ERM achieves the optimal order $d/n$ in its error rate. This happens, for example, in the case of threshold classifiers. Even more interesting, \cite{Hanneke15a} used the bound \eqref{starlabelwhp} in a more subtle way to show that ERM in the realizable case obtains expected error rate of order $\frac{d}{n}\log\frac{n \land \mathbf{s}}{d}$, and with probability at least $1-\delta$ has error rate bounded as in \eqref{s-over-d-bound}: i.e., of order $\frac{d}{n}\log \frac{n \land \mathbf{s}}{d} + \frac{1}{n}\log\frac{1}{\delta}$.
Via a more sophisticated variant of this argument, we obtain the following theorem, which is one of the novel contributions of this work.  It offers interesting general refinements over \eqref{s-over-d-bound} which we discuss below.  Its proof is included in the appendix.
\begin{theorem}
\label{startheorem}
Let $\mathbf{s}$ be the star number of a class of binary classifiers $\F$. In the realizable case, any ERM $\hat{f}$ has
\[
\E R(\hat{f}) \lesssim \frac{\log\left(\mathcal{S}_{\mathcal{F}}\left(\mathbf{s} \wedge n\right)\right)}{n}.
\]
Moreover, with probability at least $1 - \delta$,
\[
R(\hat{f}) \lesssim \frac{\log\left(\mathcal{S}_{\mathcal{F}}\left(\mathbf{s} \wedge n\right)\right)}{n} + \frac{\log(\frac{1}{\delta})}{n}.
\]
\end{theorem}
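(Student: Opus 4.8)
The plan is to sharpen the two‑stage sub‑sampling argument built on the disagreement‑measure bounds \eqref{starlabel} and \eqref{starlabelwhp}, the point being to carry the growth function $\mathcal{S}_{\mathcal{F}}$ through the argument \emph{exactly} rather than relaxing it via Sauer--Shelah. Split the sample into the first $m=\lfloor n/2\rfloor$ points and the remaining $n'=n-m\ge n/2$ points. Let $\mathcal{V}_m$ be the version space after the first half, $A=\dis(\mathcal{V}_m)$, and $q=P_X(A)$. In the realizable case $f^{*}\in\mathcal{V}_m$, so every classifier in $\mathcal{V}_m$ --- in particular every ERM $\hat{f}$, since $\hat{f}\in\mathcal{V}_n\subseteq\mathcal{V}_m$ --- agrees with $f^{*}$ off $A$; hence $R(\hat{f})=q\cdot P_X(\hat{f}\neq f^{*}\mid A)$. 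By \eqref{starlabel} (resp.\ \eqref{starlabelwhp}) the first factor is controlled: $\E q\le\frac{\mathbf{s}}{m+1}$, and with probability at least $1-\delta$ we have $q\lesssim\frac{\mathbf{s}+\log(\frac{1}{\delta})}{n}$.

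Conditioning on the first half, I would analyze the second half restricted to $A$. Let $\ell$ be the number of second‑half points lying in $A$; conditionally on the first half and on which indices these are, their feature values are i.i.d.\ from $P_X(\cdot\mid A)$ and their labels are $f^{*}$, so we face a realizable learning problem inside $A$ with the class $\mathcal{F}$ (whose restriction to $A$ has growth function at most $\mathcal{S}_{\mathcal{F}}$) and $\ell$ examples. Since $\mathcal{V}_n$ lies in the version space of these $\ell$ examples, the classical realizable Vapnik--Chervonenkis bound \cite{vapnik68,Boucheron05} gives, with conditional probability at least $1-\delta$ and uniformly over the realized value of $\ell$ (a routine union bound over $\ell\in\{1,\dots,n\}$, the case $\ell=0$ being absorbed into the always‑valid bound $R(\hat{f})\le q$),
\[
P_X(\hat{f}\neq f^{*}\mid A)\ \lesssim\ \frac{\log\mathcal{S}_{\mathcal{F}}(\ell)+\log(\frac{1}{\delta})}{\ell}.
\]

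Multiplying the two stages yields $R(\hat{f})\lesssim q\cdot\frac{\log\mathcal{S}_{\mathcal{F}}(\ell)+\log(\frac{1}{\delta})}{\ell}$. A multiplicative Chernoff bound shows $\ell$ lies within a constant factor of $n'q$ once $n'q\gtrsim\log(\frac{1}{\delta})$ (and if $n'q\lesssim\log(\frac{1}{\delta})$ then $R(\hat{f})\le q\lesssim\frac{\log(1/\delta)}{n}$ already), so $q/\ell\lesssim 1/n$ and therefore $R(\hat{f})\lesssim\frac{\log\mathcal{S}_{\mathcal{F}}(\ell)+\log(\frac{1}{\delta})}{n}$. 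Since $\ell\le n$ always and, on the good event, $\ell\lesssim n'q\lesssim\mathbf{s}+\log(\frac{1}{\delta})$, we get $\ell\lesssim(\mathbf{s}\wedge n)+\log(\frac{1}{\delta})$; the elementary inequalities $\mathcal{S}_{\mathcal{F}}(a+b)\le\mathcal{S}_{\mathcal{F}}(2a)\mathcal{S}_{\mathcal{F}}(2b)$ and $\mathcal{S}_{\mathcal{F}}(2k)\le\mathcal{S}_{\mathcal{F}}(k)^{2}$ (both obtained by partitioning a point set; the latter also shows that a constant factor inside $\mathcal{S}_{\mathcal{F}}$ produces only a constant factor outside $\log\mathcal{S}_{\mathcal{F}}$), together with $d\log(\frac{x}{d})\le x$, then give $\log\mathcal{S}_{\mathcal{F}}(\ell)\lesssim\log\mathcal{S}_{\mathcal{F}}(\mathbf{s}\wedge n)+\log(\frac{1}{\delta})$, which is the claimed high‑probability bound. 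The in‑expectation statement follows by integrating this tail bound, or by running the same decomposition directly in expectation using $\E q\le\frac{\mathbf{s}}{m+1}$ and the in‑expectation form of the realizable VC bound.

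The main obstacle is the bookkeeping around the random inner sample size $\ell$: one must make the realizable bound hold uniformly over its realized value, control $\ell$ from below (to convert $q/\ell$ into $1/n$) and from above (to keep $\mathcal{S}_{\mathcal{F}}(\ell)$ localized at $\mathbf{s}\wedge n$), dispatch the degenerate $q$‑small / $\ell=0$ regime, and --- the genuinely delicate point --- absorb the $\log(\frac{1}{\delta})$ that appears \emph{inside} $\mathcal{S}_{\mathcal{F}}(\cdot)$, which is exactly where submultiplicativity of the growth function and $d\log(\frac{x}{d})\le x$ are needed to land on the clean form $\log\mathcal{S}_{\mathcal{F}}(\mathbf{s}\wedge n)$ rather than $\log\mathcal{S}_{\mathcal{F}}\!\big(C(\mathbf{s}\wedge n)+C\log(\frac{1}{\delta})\big)$.
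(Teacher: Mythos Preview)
Your proposal is correct and follows essentially the same two-stage argument as the paper: split the sample, control $P_X(\dis(\mathcal{V}_m))$ via \eqref{starlabel}/\eqref{starlabelwhp}, apply a realizable growth-function bound conditionally on the second half restricted to the disagreement region, control the random inner sample size by Chernoff, and then use submultiplicativity $(\mathcal{S}_{\mathcal{F}}(kr))^{1/r}\le\mathcal{S}_{\mathcal{F}}(k)$ to localize the argument of $\mathcal{S}_{\mathcal{F}}$ at $\mathbf{s}\wedge n$. The only cosmetic difference is that the paper derives the inner bound $\E R(\hat{f})\le\frac{4\log\mathcal{S}_{\mathcal{F}}(n)}{n}$ via its own shifted/offset machinery (Lemmas~\ref{symmetrization} and~\ref{expectmax}) rather than citing the classical realizable VC bound, and works directly in expectation rather than integrating a tail bound.
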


We may prove (due to Vapnik and Chervonenkis's bound on the growth function \cite{vapnik68}) that this inequality is an alternative way of recovering the upper bound 
\eqref{s-over-d-bound} discussed above, and its implied bound 
$\E R(\hat{f}) \lesssim \frac{d\log\left(\frac{\mathbf{s} \wedge n}{d}\right)}{n}$
for ERM,
also established by \cite{Hanneke15a}.
\begin{example}
\label{twoclassesexample}
Theorem \ref{startheorem} yields simple examples showing the gaps in the 
distribution-free bound \eqref{s-over-d-bound} in the realizable case.  
Specifically, suppose $\mathcal{X} = \{x_{1}, \ldots, x_{\mathbf{s}}\}$, 
define class $\F_{1}$ as the classifiers on this $\mathcal{X}$ with at most $d$ 
points classified $1$, and class $\F_{2}$ as the classifiers having at most $d-1$ points 
classified $1$ among $\{x_1,\ldots,x_{d-1}\}$ and at most one point classified $1$ among 
$\{x_{d},\ldots,x_{\mathbf{s}}\}$.  For both $\F_1$ and $\F_2$, the VC dimension is $d$ 
and the star number is $\mathbf{s}$.  However, for $\F_1$ Theorem~\ref{startheorem} 
gives a bound of order $\frac{d\log\left(\frac{\mathbf{s} \wedge n}{d}\right)}{n}$, 
but for $\F_{2}$ it gives a smaller bound of order 
$\frac{d + \log\left(\mathbf{s} \wedge n\right)}{n}$.  
In both cases, these are known to be tight characterizations of ERM in the realizable case 
\cite{Haussler94,Hanneke15a}.  

It should be noted, however, that one can also construct 
examples where Theorem~\ref{startheorem} is itself not tight.
For instance, for $\mathbf{s} > 2(d-1)$, consider  
$\mathcal{X} = [0,d-1) \cup \{d,\ldots,\mathbf{s}-d+1\}$
and
$\F_{3}$ as the functions that classify as $1$ points in a set 
$\bigcup_{i=1}^{d-1} [t_{i},i)$, for some parameters $t_{i} \in [i-1,i)$,
and also classify as $1$ at most one point among $\{d,\ldots,\mathbf{s}-d+1\}$,
and classify all other points in $\mathcal{X}$ as $-1$. 
The VC dimension of $\F_{3}$ is $d$ and the star number is $\mathbf{s}$.
Theorem~\ref{startheorem} yields a bound of order 
$\frac{d \log\left(\frac{\mathbf{s} \wedge n}{d}\right)}{n}$,
whereas one can easily verify that for this $\F_{3}$ ERM (in the realizable case) actually 
achieves an expected risk of order $\frac{d + \log\left(\mathbf{s} \wedge n\right)}{n}$.
\end{example} 

\section{Bounds in Terms of a Global Packing}
The main aim of this section is to give a simple bound in terms of a fixed point of global packings. We will further significantly improve this result in the next section, and therefore for simplicity here we will consider only the realizable case. We note that a similar result may be derived from classic results on ratio type empirical processes (see Section $19.6$ of \cite{Anthony99}). We include the details of our proof here anyway, as it also serves to illustrate certain aspects of our approach in simplified form. 

Given a set of $n$ points we define for any two $f, g \in \F$ where $\rho_H(f,g) = |\{ i \in \{1,\ldots,n\} : f(x_i) \neq g(x_i) \}|$. 
We further introduce
\[
\mathcal{M}^{*}_{1}(\F, \gamma, n) = \max\limits_{x_{1}, \ldots, x_{n} \in \mathcal{X}}\mathcal{M}_{1}(\F(\{x_{1}, \ldots, x_{n}\}), \gamma),
\]
where $\mathcal{M}_{1}(\mathcal{H},\varepsilon)$ denotes the size of a maximal $\varepsilon$-packing of $\mathcal{H}$ under $\rho_H$ distance (for the given $x_1,\ldots,x_n$ points) and $\F(\{x_{1}, \ldots, x_{n}\})$ is a set of projections of $\F$ on $\{x_{1}, \ldots, x_{n}\}$.

In many statistical frameworks optimal rates are usually obtained when one carefully balances the radius and the logarithm of a packing number with respect to the same radius (for example, Yang and Barron \cite{Yang99}). It will be shown that in our bounds it is natural to choose $\gamma$ such that $c\gamma \approx \log(\mathcal{M}^{*}_{1}(\F, \gamma, n))$ for some $c \in [0, 1]$. So we define 
\[
\gamma^{*}_{c}(n, \F) = \max\{\gamma \in \mathbb{N}: c\gamma \le \log\left(\mathcal{M}^{*}_{1}(\F, \gamma, n)\right)\}.
\]
The value $\gamma^{*}_{c}(n, \F)$ will be referred to as a \emph{fixed point of empirical entropy}. When $\F$ is clear from the context, we simply write $\gamma^{*}_{c}(n)$ instead of $\gamma^{*}_{c}(n, \F)$.  Note that $\gamma^{*}_{c}(n,\F)$ is a well-defined strictly positive-valued quantity, since we are using the truncated logarithm.

\begin{proposition}
\label{coveringbound}
Fix any function class $\F$; denote its VC dimension $d$.  If $P \in \mathcal{P}(1, \F)$ (realizable case), then for any ERM $\hat{f}$,
\[
\E R(\hat{f}) \lesssim \frac{\gamma^{*}_{\frac{1}{2}}(n)}{n}.
\]
Moreover with probability at least $1 - \delta$,
\[
R(\hat{f})  \lesssim \frac{\gamma^{*}_{\frac{1}{2}}(n)}{n} + \frac{\log{\frac{1}{\delta}}}{n},
\]
and
\begin{equation}
\label{fixedpointupperbound}
\gamma^{*}_{\frac{1}{2}}(n) \lesssim d\log(n/d).
\end{equation}
\end{proposition}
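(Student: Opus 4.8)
The plan is to treat the three assertions separately, dispatching the entropy estimate \eqref{fixedpointupperbound} first. By the Sauer--Shelah lemma, for any $x_{1},\dots,x_{n}$ the projection $\F(\{x_{1},\dots,x_{n}\})$ has at most $\mathcal{S}_{\F}(n)\le(en/d)^{d}$ elements, so $\log\bigl(\mathcal{M}^{*}_{1}(\F,\gamma,n)\bigr)\lesssim d\log(n/d)$ for \emph{every} $\gamma$; since by definition $\tfrac12\gamma^{*}_{1/2}(n)\le\log\bigl(\mathcal{M}^{*}_{1}(\F,\gamma^{*}_{1/2}(n),n)\bigr)$, this forces $\gamma^{*}_{1/2}(n)\lesssim d\log(n/d)$. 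For the two risk bounds I would pass to the excess loss class $\G=\{(x,y)\mapsto\Ind[f(x)\ne y]-\Ind[f^{*}(x)\ne y]:f\in\F\}$. In the realizable case $Y=f^{*}(X)$ almost surely, so each $g_{f}\in\G$ coincides a.s.\ with $\Ind[f(X)\ne f^{*}(X)]\in\{0,1\}$, whence $Pg_{f}=R(f)\ge0$ and $Pg_{f}^{2}=Pg_{f}$; thus $\G$ is a $(1,1)$-Bernstein class with $Pg\ge0$ for every $g$. Moreover ERM has zero empirical error, i.e.\ $P_{n}g_{\hat{f}}=0$, so for any $c\ge0$ we get $R(\hat{f})=Pg_{\hat{f}}=(P-(1+c)P_{n})g_{\hat{f}}\le\sup_{g\in\G}(P-(1+c)P_{n})g$, and everything reduces to controlling a shifted empirical process over $\G$.

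Write $v_{f,i}=\Ind[f(X_{i})\ne f^{*}(X_{i})]\in\{0,1\}$, so $\sum_{i}\varepsilon_{i}g_{f}(Z_{i})=\sum_{i}\varepsilon_{i}v_{f,i}$ and $nP_{n}g_{f}=\sum_{i}|v_{f,i}|$. For the bound in expectation, take $c=2$ in Lemma~\ref{symmetrization} to reduce to the offset quantity $\E\E_{\varepsilon}\sup_{f}\bigl(\sum_{i}\varepsilon_{i}v_{f,i}-\tfrac12\sum_{i}|v_{f,i}|\bigr)$. The main step is a \emph{single-scale} covering: for a parameter $\gamma$, let $\mathcal{N}$ be a maximal $\gamma$-packing of $\F(\{X_{1},\dots,X_{n}\})$ in $\rho_{H}$ (hence also a $\gamma$-cover), and for $f\in\F$ pick $g=\pi(f)\in\mathcal{N}$ with $\rho_{H}(f,g)\le\gamma$. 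Since $v_{f,i}\ne v_{g,i}$ forces $f(X_{i})\ne g(X_{i})$, the vectors $v_{f},v_{g}$ differ in at most $\gamma$ coordinates, so $\sum_{i}\varepsilon_{i}v_{f,i}\le\sum_{i}\varepsilon_{i}v_{g,i}+\gamma$ and $\sum_{i}|v_{f,i}|\ge\sum_{i}|v_{g,i}|-\gamma$; hence $\sup_{f}\bigl(\sum_{i}\varepsilon_{i}v_{f,i}-\tfrac12\sum_{i}|v_{f,i}|\bigr)\le\max_{g\in\mathcal{N}}\bigl(\sum_{i}\varepsilon_{i}v_{g,i}-\tfrac12\sum_{i}|v_{g,i}|\bigr)+\tfrac32\gamma$ deterministically. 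Lemma~\ref{expectmax} with $c=\tfrac12$ applied to $\{v_{g}:g\in\mathcal{N}\}\subset\{0,1\}^{n}$ bounds the expected maximum by $\log|\mathcal{N}|\le\log\mathcal{M}^{*}_{1}(\F,\gamma,n)$, so $\E R(\hat{f})\lesssim\bigl(\log\mathcal{M}^{*}_{1}(\F,\gamma,n)+\gamma\bigr)/n$ for every $\gamma$. Choosing $\gamma=\gamma^{*}_{1/2}(n)+1$ — so that $\log\mathcal{M}^{*}_{1}(\F,\gamma,n)<\tfrac12\gamma$ by maximality in the definition of $\gamma^{*}_{1/2}$ — and using $\gamma^{*}_{1/2}(n)\ge1$ gives $\E R(\hat{f})\lesssim\gamma^{*}_{1/2}(n)/n$.

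The high-probability statement follows the same route, with Lemma~\ref{symmetrization} replaced by Corollary~\ref{sym} (legitimate since $\G$ is $(1,1)$-Bernstein with $Pg\ge0$; take e.g.\ $c_{1}=3$, $c_{2}=1$, so $c'=1$ and the side condition $nt\ge4$ is automatic once $\log\mathcal{M}^{*}_{1}\ge1$), and with Lemma~\ref{expectmax} replaced by its Chernoff analogue: since $\E_{\varepsilon}\exp\bigl(2c(\sum_{i}\varepsilon_{i}v_{i}-c\sum_{i}|v_{i}|)\bigr)\le1$ for every $v\in\{0,1\}^{n}$, a union bound over the net gives $\max_{g\in\mathcal{N}}\bigl(\sum_{i}\varepsilon_{i}v_{g,i}-c\sum_{i}|v_{g,i}|\bigr)\lesssim\log\mathcal{M}^{*}_{1}(\F,\gamma,n)+\log(1/\delta)$ with probability at least $1-\delta$ over $\varepsilon$, uniformly over the sample (the packing bound being deterministic, this integrates to a joint statement over sample and signs). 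Combining with the same single-scale covering reduction, unwinding Corollary~\ref{sym}, taking $\gamma=\gamma^{*}_{1/2}(n)+1$, rescaling $\delta$, and using once more that $R(\hat{f})=(P-(1+c_{1})P_{n})g_{\hat{f}}\le\sup_{g\in\G}(P-(1+c_{1})P_{n})g$, yields $R(\hat{f})\lesssim\gamma^{*}_{1/2}(n)/n+\log(1/\delta)/n$ with probability at least $1-\delta$.

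The genuinely delicate point — and the reason the bound localizes to the fixed point of empirical entropy rather than to a Dudley-type entropy integral — is that the chaining is carried out at a \emph{single} scale and the residual $\sum_{i}\varepsilon_{i}(v_{f,i}-v_{g,i})$ is controlled by the crude deterministic bound $\gamma$ rather than by a sub-Gaussian estimate; the offset term $-\tfrac12\sum_{i}|v_{f,i}|$ is exactly what makes this crude estimate affordable, and balancing $\gamma$ against $\log\mathcal{M}^{*}_{1}(\F,\gamma,n)$ is precisely what defines $\gamma^{*}_{1/2}(n)$. The only other care needed is the direction of the inequality extracted from the definition of $\gamma^{*}_{1/2}$ (handled by evaluating the packing at $\gamma^{*}_{1/2}(n)+1$) and the elementary fact that a maximal $\gamma$-packing is a $\gamma$-cover.
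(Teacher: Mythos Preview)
Your proposal is correct and follows essentially the same approach as the paper: reduce to a shifted empirical process, apply the shifted symmetrization (Lemma~\ref{symmetrization} for expectation, Corollary~\ref{sym} for deviation), control the resulting offset Rademacher process by a single-scale $\gamma$-cover plus Lemma~\ref{expectmax} (or its Chernoff analogue), and choose $\gamma=\gamma^{*}_{1/2}(n)+1$. The only cosmetic differences are that the paper works with $\G_{f^{*}}$ rather than the excess loss class (these coincide a.s.\ in the realizable case, as you note), that the paper's Lemma~\ref{expectmaxnew} uses a three-term split with a reduced offset $c/4$ on the net while your two-term split keeps the full offset and pushes the slack entirely into the additive $\tfrac32\gamma$ (both valid, yours is arguably cleaner), and that the paper cites Haussler's packing bound for \eqref{fixedpointupperbound} whereas your Sauer--Shelah argument is already sufficient.
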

To prove this proposition we need a technical lemma, which may be considered as a modification of Lemma $6$ in \cite{Liang15}.
\begin{lemma}
\label{expectmaxnew}
Let $\G$ be a set of functions taking binary values, and let $c \in [0, 1]$ be a constant. Let $\varepsilon_{1},\ldots,\varepsilon_{n}$ be independent Rademacher random variables. Then
\[
\frac{1}{n}\E_{\varepsilon}\max\limits_{g \in \G}\left(\sum\limits_{i = 1}^{n}\varepsilon_{i}g(X_{i})- cg(X_{i})\right) \le \frac{7\gamma^{*}_{c}(n)}{n}.
\]
\end{lemma}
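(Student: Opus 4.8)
The plan is to reduce this offset maximum to the setting of Lemma~\ref{expectmax} via a single-scale discretization of the class at the fixed-point radius of the empirical entropy. Since both $\gamma^{*}_{c}(n)$ and the quantity to be bounded depend on $\G$ only through the projection $G \subseteq \{0,1\}^{n}$ of $\G$ onto $X_{1},\ldots,X_{n}$, I fix these points and bound $\E_{\varepsilon}\max_{v \in G}\bigl(\sum_{i}\varepsilon_{i}v_{i} - c v_{i}\bigr)$. Set $\gamma := \gamma^{*}_{c}(n)+1$ and fix a maximal $\gamma$-packing $V$ of $G$ in the Hamming distance $\rho_{H}$; by maximality every $v \in G$ admits some $\pi(v) \in V$ with $\rho_{H}(v,\pi(v)) \le \gamma$, and $|V| \le \mathcal{M}^{*}_{1}(\F,\gamma,n)$ (abusing the notation $\mathcal{M}^{*}_{1}$ for $\G$). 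We may assume $c \in (0,1]$ and $\gamma \le n$: the case $c=0$ is vacuous, and if $\gamma^{*}_{c}(n) \ge n$ the claim is trivial since the left-hand side never exceeds $1$.

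\textbf{Key steps.} First, a deterministic discretization. For each $v \in G$ write $\sum_{i}(\varepsilon_{i} - c)v_{i} = \sum_{i}(\varepsilon_{i} - c)\pi(v)_{i} + \sum_{i}(\varepsilon_{i} - c)(v_{i} - \pi(v)_{i})$; since $|\varepsilon_{i} - c| \le 1+c \le 2$ and $v,\pi(v) \in \{0,1\}^{n}$, the last sum is at most $(1+c)\rho_{H}(v,\pi(v)) \le 2\gamma$. Taking maxima (and using $\pi(v)_{i} = |\pi(v)_{i}|$) gives
\[
\E_{\varepsilon}\max_{v \in G}\Bigl(\sum_{i}\varepsilon_{i}v_{i} - c v_{i}\Bigr) \le \E_{\varepsilon}\max_{w \in V}\Bigl(\sum_{i}\varepsilon_{i}w_{i} - c|w_{i}|\Bigr) + 2\gamma .
\]
Second, I would invoke Lemma~\ref{expectmax} with $V \subset \{-1,0,1\}^{n}$ and $N = |V| \le \mathcal{M}^{*}_{1}(\F,\gamma,n)$, bounding the first term by $\tfrac{1}{2c}\log\bigl(\mathcal{M}^{*}_{1}(\F,\gamma,n)\bigr)$. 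Third, I would use the definition of the fixed point: since $\gamma = \gamma^{*}_{c}(n)+1$ violates the defining inequality $c\gamma \le \log(\mathcal{M}^{*}_{1}(\F,\gamma,n))$, we get $\log(\mathcal{M}^{*}_{1}(\F,\gamma,n)) < c\gamma$, so the first term is below $\gamma/2$. Combining, $\E_{\varepsilon}\max_{v \in G}(\sum_{i}\varepsilon_{i}v_{i} - c v_{i}) < \tfrac{5}{2}\gamma = \tfrac{5}{2}(\gamma^{*}_{c}(n)+1) \le 5\gamma^{*}_{c}(n)$ since $\gamma^{*}_{c}(n) \ge 1$; dividing by $n$ gives the bound, with a constant $5 \le 7$ (the slack absorbs any looseness in the packing/covering conventions).

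\textbf{Main obstacle.} The delicate point is the choice of discretization scale. The fixed point $\gamma^{*}_{c}(n)$ is defined through a \emph{lower} bound on the log-packing number, $c\gamma \le \log(\mathcal{M}^{*}_{1}(\F,\gamma,n))$, whereas the argument needs an \emph{upper} bound on the log-packing number at the scale where we discretize. Passing from $\gamma^{*}_{c}(n)$ to $\gamma^{*}_{c}(n)+1$ is precisely what converts the lower bound into the required strict upper bound $\log(\mathcal{M}^{*}_{1}(\F,\gamma,n)) < c\gamma$, and it costs only a bounded multiplicative factor because $\gamma^{*}_{c}(n) \ge 1$; this is why the final constant is an absolute constant rather than $1$. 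Apart from this, the two remaining ingredients are routine: the coordinatewise discretization estimate and the already-established Lemma~\ref{expectmax}.
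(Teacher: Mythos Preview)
Your proof is correct and follows essentially the same strategy as the paper: discretize at scale $\gamma = \gamma^{*}_{c}(n)+1$, apply Lemma~\ref{expectmax} to the cover, and then invoke the fixed-point definition to convert the log-packing number into $c\gamma$. The only difference is that you use the direct two-term split $(\varepsilon_i-c)v_i = (\varepsilon_i-c)\pi(v)_i + (\varepsilon_i-c)(v_i-\pi(v)_i)$, while the paper follows the three-term Liang--Rakhlin--Sridharan decomposition (separating the Rademacher and offset parts), which forces them to apply Lemma~\ref{expectmax} with offset $c/4$ instead of $c$ and yields the constant~$7$ rather than your~$5$.
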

\begin{proof}
Given $X_1,\ldots,X_n$, let $V = \{ (g(X_1),\ldots,g(X_n)) : g \in \G \}$ denote the set of binary vectors corresponding to the values of functions in $\G$.
As above, for a fixed $\gamma$ and fixed minimal $\gamma$-covering subset $\mathcal{N}_{\gamma} \subseteq V$, for each $v \in V$, $p(v)$ will denote the closest vector to $v$ in $\mathcal{N}_{\gamma}$. First we follow the decomposition proposed by Liang, Rakhlin, and Sridharan \cite{Liang15}: 
\begin{align*}
&\E_{\varepsilon}\max\limits_{v \in V}\left(\sum\limits_{i = 1}^{n}\varepsilon_{i}v_{i}- cv_{i}\right)
\\
&\le
\E_{\varepsilon}\max\limits_{v \in V}\left(\sum\limits_{i = 1}^{n}\varepsilon_{i}\left(v_{i}- p(v)_{i}\right)\right) + 
\max\limits_{v \in V}\left(\sum\limits_{i = 1}^{n}\frac{c}{4}p(v)_{i}- cv_{i}\right)
\\
&+
\E_{\varepsilon}\max\limits_{v \in V}\left(\sum\limits_{i = 1}^{n}\varepsilon_{i}p(v)_{i}- \frac{c}{4}p(v)_{i}\right).
\end{align*}
Since $p(v)$ is within Hamming distance $\gamma$ of $v$, we know $\sum_{i=1}^{n} p(v)_{i} \leq \gamma + \sum_{i=1}^{n} v_{i}$, and therefore the second summand in the above expression is at most 
\[\max\limits_{v \in V} \left( \frac{c}{4}\gamma - \frac{3 c}{4}  \sum_{i=1}^{n} v_{i} \right) \leq \frac{c}{4} \gamma.\]  The third summand is upper bounded by $\frac{2}{c}\log(|\mathcal{N}_{\gamma}|)$ by Lemma \ref{expectmax} and the first term is upper bounded by $\gamma$ by the $\gamma$-cover property of the $p(v)$ vectors. Then we use the standard relation that the size a of minimal covering is less than or equal to the size of a maximal packing \cite{Devr01} to conclude that
\[
\frac{1}{n}\E_{\varepsilon}\max\limits_{v \in V}\left(\sum\limits_{i = 1}^{n}\varepsilon_{i}v_{i}- cv_{i}\right) \le \frac{(1+c/4)\gamma}{n} + \frac{2}{c}\frac{\log(\mathcal{M}_{1}(V, \gamma))}{n}.
\]
By choosing $\gamma = \gamma^{*}_{c}(n) + 1$ we have
\[
\frac{(1+c/4)\gamma}{n} + \frac{2}{c}\frac{\log(\mathcal{M}_{1}(V, \gamma))}{n} \le \frac{(1+c/4)(\gamma^{*}_{c}(n) + 1)}{n} + \frac{2(\gamma^{*}_{c}(n) + 1)}{n} \le \frac{7\gamma^{*}_{c}(n)}{n}.
\] 
\end{proof}
\begin{proof}[Proposition \ref{coveringbound}]
First we introduce a \emph{loss class} $\mathcal{G}_{f^{*}} = \{x \to \Ind[f(x)\neq f^{*}(x)]\ :\ f \in \F\}$. Let $\hat{f}$ be any ERM and $\hat{g}$ be a corresponding function in the loss class $\mathcal{G}_{f^{*}}$. We obviously have $\E R(\hat{f}) = P\hat{g}$ and $P_{n}\hat{g} = 0 $. Then for any $c > 0$
\[
\E R(\hat{f}) = \E(R(\hat{f}) - (1 + c)R_{n}(\hat{f})) \le \E\sup\limits_{g \in \mathcal{G}_{f^{*}}}(Pg - (1+c)P_{n}g).
\]
By Lemma \ref{symmetrization} we have
\[
\E\sup\limits_{g \in \G_{f^{*}}}(Pg - (1+c)P_{n}g) \le \frac{c + 2}{n}\E\E_{\varepsilon}\sup\limits_{g \in \G_{f^{*}}}\left(\sum\limits_{i = 1}^{n}\varepsilon_{i}g(X_{i})- \frac{c}{c + 2}g(X_{i})\right)
\]
Applying the Lemma \ref{expectmaxnew} and fixing $c = 2$ we finish the proof of the bound on the expectation.  The high probability version of this bound is deferred to the appendix.
\end{proof}
\begin{example}
\label{threshold}
Consider the class of threshold classifiers, that is $\F = \{x \to 2\Ind[x \le t] - 1: t \in \mathbb{R}\}$. Using the definition of the star number it easy to see that it is equal to $2$ in this case and Theorem \ref{startheorem} gives an optimal $\frac{1}{n}$ upper bound for ERM. At the same time the worst case packing numbers $\mathcal{M}^{*}_{1}(\F, \gamma, n)$ are of order $\frac{n}{\gamma}$. A simple analysis of the fixed point gives us $\gamma^{*}_{\frac{1}{2}}(n) \simeq \log(n)$ and thus Proposition \ref{coveringbound} will give us suboptimal $\frac{\log{n}}{n}$ distribution free upper bound. Although we captured that the rate is faster than $\frac{1}{\sqrt{n}}$, our analysis of the complexity term is suboptimal.  The next section discusses a correction for this, which also yields optimal rates under moderate bounded noise in general.
\end{example}

\section{Local Metric Entropy}
This section presents our main result.  Toward this end, we introduce a new complexity measure: the \emph{worst-case local empirical packing numbers}. Given a set of $n$ points we fix some $f \in \F$ and construct a Hamming ball of the radius $\gamma$. So, $\mathcal{B}_{H}(f, \gamma, \{x_{1}, \ldots, x_{n}\}) = \{g \in \F| \rho_H(f, g) \le \gamma\}$ and define
\begin{equation}
\label{localentropy}
\mathcal{M}^{\text{loc}}_{1}(\F, \gamma, n, h) = \max\limits_{x_{1}, \ldots, x_{n}}\max\limits_{f \in \F}\max\limits_{\varepsilon \ge \gamma}\mathcal{M}_{1}(\mathcal{B}_{H}(f, \varepsilon/h, \{x_{1}, \ldots, x_{n}\}), \varepsilon/2),
\end{equation}
where once again $\mathcal{M}_{1}(\mathcal{H},\varepsilon)$ denotes the size of a maximal $\varepsilon$-packing of $\mathcal{H}$ under $\rho_H$ distance (for the given $x_1,\ldots,x_n$ points).
Fix any $h, h' \in (0, 1]$ and define
\[
\gamma^{\text{loc}}_{h, h'}(n, \F) =  \max\{\gamma \in \mathbb{N}: h\gamma \le \log(\mathcal{M}^{\text{loc}}_{1}(\F, \gamma, n, h'))\}.
\]
When $\F$ is clear from the context, we simply write $\gamma^{\text{loc}}_{h, h'}(n)$ instead of $\gamma^{\text{loc}}_{h, h'}(n, \F)$. The quantity $\gamma^{\text{loc}}_{h,h'}(n)$ defines the \emph{fixed point of a local empirical entropy}. We note that, because $1 \leq d < \infty$ in this work, when $h,h' > 0$ the set on the right in this definition is finite and nonempty,
so that $\gamma^{\text{loc}}_{h,h'}(n)$ is a well-defined strictly-positive integer.  
Indeed, for any $h,h' \in (0,1]$, the value $\gamma = \lfloor \frac{1}{h} \rfloor$ satisfies $h\gamma \leq 1$, so that (because $\log(\cdot)$ is the truncated logarithm) this $\gamma$ is contained in the set; in particular, this implies $h \gamma^{\text{loc}}_{h,h'}(n,\F) \geq h \lfloor \frac{1}{h} \rfloor \geq \frac{1}{2}$ always.

The next theorem is the main upper bound of this paper.
\begin{theorem}
\label{mainupperbound}
Fix any function class $\F$; denote its VC dimension $d$ and star number $\mathbf{s}$.  Fix any $h \in \left(\sqrt{\frac{d}{n}}, 1\right]$. 
If $P \in \mathcal{P}(h, \F)$, then for any ERM $\hat{f}$,
\begin{equation}
\label{expectbound}
\E(R(\hat{f}) - R(f^{*})) \lesssim \frac{\gamma^{\text{\emph{loc}}}_{h, h}(n)}{n}. 
\end{equation}
Also, with probability at least $1-\delta$,
\begin{equation}
\label{devbound}
R(\hat{f}) - R(f^{*}) \lesssim \frac{\gamma^{\text{\emph{loc}}}_{h, h}(n)}{n} + \frac{\log(\frac{1}{\delta})}{nh}.
\end{equation}
Moreover
\begin{equation}
\label{controloffixedpoint}
 \frac{d + \log\left(nh^2 \wedge \mathbf{s}\right)}{h} \lesssim \gamma^{\text{\emph{loc}}}_{h, h}(n) \lesssim \frac{d\log\left(\frac{nh^2}{d} \wedge \mathbf{s}\right)}{h} + \frac{d\log\left(\frac{1}{h}\right)}{h}. 
\end{equation}
\end{theorem}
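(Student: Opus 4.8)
The plan is to derive the two risk bounds \eqref{expectbound}--\eqref{devbound} from the shifted-process machinery of Section~3, applied to a loss-difference class, and to establish \eqref{controloffixedpoint} by a direct combinatorial analysis of Hamming balls in VC classes.

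\textbf{Setup.} Work with the loss-difference class $\mathcal{G}_{f^{*}}=\{z\mapsto\Ind[f(x)\neq y]-\Ind[f^{*}(x)\neq y]:f\in\F\}$. Writing $g_{f}(Z)=s(Z)D_{f}(X)$ with $s(Z)=1-2\Ind[f^{*}(X)\neq Y]\in\{-1,1\}$ and $D_{f}(X)=\Ind[f(X)\neq f^{*}(X)]$, one has $\E[s(Z)\mid X]=|\eta(X)|$, so under Massart's condition $Pg_{f}=\E[|\eta(X)|D_{f}(X)]\geq h\,P_{X}(f\neq f^{*})=h\,Pg_{f}^{2}$ and $Pg_{f}\geq0$; thus $\mathcal{G}_{f^{*}}$ is a $(1/h,1)$-Bernstein class with nonnegative means, exactly as needed for Lemma~\ref{symmetrizationdev} and Corollary~\ref{sym}. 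The key elementary observation, used to connect everything to $\mathcal{M}^{\text{loc}}_{1}$, is that because labels are binary the Hamming distance between $(g_{f}(Z_{i}))_{i\le n}$ and $(g_{g}(Z_{i}))_{i\le n}$ equals $\rho_{H}(f,g)$, so the worst-case local empirical packing numbers are precisely the relevant packing numbers for $\mathcal{G}_{f^{*}}$ restricted to empirical Hamming balls around $f^{*}$.

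\textbf{Expectation bound.} Since ERM gives $P_{n}\hat g\leq0$, for an absolute constant $c>0$ we have $\E(R(\hat f)-R(f^{*}))=\E P\hat g\leq\E\sup_{g\in\mathcal{G}_{f^{*}}}(Pg-(1+c)P_{n}g)$; Lemma~\ref{symmetrization} passes to the offset Rademacher process $\tfrac{c+2}{n}\E\E_{\varepsilon}\sup_{f}(\sum_{i}\varepsilon_{i}g_{f}(Z_{i})-\tfrac{c}{c+2}g_{f}(Z_{i}))$. The heart of the proof is a \emph{localized} refinement of Lemma~\ref{expectmaxnew} showing this is $\lesssim\gamma^{\text{loc}}_{h,h}(n)/n$: peel the index $f$ into dyadic shells by the empirical disagreement $\rho_{H}(f,f^{*})$; on the shell of empirical radius $\simeq\varepsilon/h$ use a minimal $\varepsilon/2$-cover, so the cover-approximation term is $\leq\varepsilon/2$, the log-cover-size is $\lesssim h\gamma^{\text{loc}}_{h,h}(n)$ by the definition of $\mathcal{M}^{\text{loc}}_{1}$ and of the fixed point and is controlled via Lemma~\ref{expectmax}, and the offset $-\tfrac{c}{c+2}g_{f}$ (whose population version is the positive quantity $Pg_{f}$) absorbs the shells that are empirically far from $f^{*}$, the Bernstein relation $Pg_{f}\geq h P_{X}(f\neq f^{*})$ being what calibrates the shell radius $\varepsilon/h$ against the offset scale $\varepsilon$. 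Summing the geometric series over shells and choosing $c$ appropriately yields \eqref{expectbound}. \emph{This localized offset estimate is the main obstacle}: the loss-difference functions are $\{-1,0,1\}$-valued, so the offset is a \emph{signed} quantity rather than the $\|\cdot\|_{1}$-penalty of Lemma~\ref{expectmaxnew}, and the Massart bias of the signs $s_{i}$ cannot be used after the supremum over $f$ is taken; the shell peeling, with a careful choice of the covering scale inside each shell so that the ball-radius/scale coupling built into $\mathcal{M}^{\text{loc}}_{1}$ is respected, is what resolves this.

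\textbf{Deviation bound.} Apply Corollary~\ref{sym} (its range hypothesis holds since $h\gamma^{\text{loc}}_{h,h}(n)\ge\tfrac12$ forces $t\gtrsim\tfrac1{nh}$) to reduce the tail of $\sup_{g}(Pg-(1+c_{1})P_{n}g)$ to the tail of a shifted Rademacher process; bound its expectation as in the previous step and its fluctuations by a bounded-difference (or Talagrand) inequality. Because the variance proxy is $\lesssim\tfrac1h\sup_{g}Pg$, the deviation term is of Bernstein form $\sqrt{(\gamma^{\text{loc}}_{h,h}(n)/n)\log(1/\delta)/(nh)}+\log(1/\delta)/(nh)$, and $2ab\le a^{2}+b^{2}$ reabsorbs the square-root term into $\gamma^{\text{loc}}_{h,h}(n)/n+\log(1/\delta)/(nh)$, giving \eqref{devbound}.

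\textbf{Control of the fixed point.} This is purely combinatorial. For the upper bound, estimate $\log\mathcal{M}_{1}(\mathcal{B}_{H}(f,\varepsilon/h,x),\varepsilon/2)$ for $\varepsilon\geq\gamma$ by the minimum of (i) a localized Haussler packing bound for VC classes, of order $d\log(1/h)$ (the ratio of ball radius to packing scale being $2/h$), (ii) the global Sauer--Shelah/Haussler bound of order $d\log(n/\varepsilon)$, and (iii) a star-number bound of order $d\log(\mathbf{s}/d)$, coming from the fact that the region of disagreement of a small empirical Hamming ball around $f$ has size governed by $\mathbf{s}$ (cf.\ \eqref{starlabel}); substituting into $h\gamma\leq\log\mathcal{M}^{\text{loc}}_{1}$ and solving the resulting self-consistent inequality yields the stated upper bound. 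For the lower bound, exhibit for $\varepsilon$ of the right size a large family of classifiers lying in one empirical Hamming ball and pairwise $(\varepsilon/2)$-separated: a shattered set of size $d$, embedded over blocks of $\simeq1/h$ sample points so that distinct extensions differ on $\simeq d/h$ coordinates, contributes $\gtrsim d/h$, while an analogously embedded star configuration contributes $\gtrsim\log(\mathbf{s}\wedge nh^{2})/h$, the truncation arising because $\varepsilon\geq\gamma$ together with the constraint $\varepsilon/h\leq n$ on the ball radius forces $\gamma\leq nh$; the maximum gives the left inequality of \eqref{controloffixedpoint}. Throughout, the hypothesis $h>\sqrt{d/n}$ is exactly what guarantees these constructions (and the peeling above) fit inside $n$ sample points.
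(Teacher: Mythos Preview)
Your outline for \eqref{expectbound} has a real gap at precisely the point you flag as ``the main obstacle.'' After symmetrization the offset is $-\tfrac{c}{c+2}\sum_i g_f(Z_i)$, a \emph{signed} empirical quantity: on noisy samples ($s(Z_i)=-1$) it rewards large disagreement rather than penalizing it. You note that its ``population version is the positive quantity $Pg_f$'' and that the Bernstein relation $Pg_f\ge h\,P_X(f\neq f^*)$ calibrates the shell radius, but the peeling you propose is over the \emph{empirical} disagreement $\rho_H(f,f^*)$, and the offset you must absorb is the \emph{empirical} $\sum_i g_f(Z_i)$, not its mean. The supremum over $f$ is taken with $Y$ already fixed, so the Massart bias of the signs $s_i$ cannot be invoked pointwise; nothing in your sketch rules out an $f$ on a large shell whose disagreement set with $f^*$ happens to consist mostly of noisy labels, making the offset large and positive. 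The paper closes this gap with a dedicated step (Lemma~\ref{contraction}): one writes $-g_f(Z_i)=-h_i'\,D_f(X_i)+\xi_i^{(h')}D_f(X_i)$ with $\xi_i^{(h')}$ centered and subgaussian given $X$, so the signed offset splits into a deterministic unsigned penalty $\le -\tfrac{h}{2}\sum_i|f'(X_i)|$ (exactly the $\ell_1$-type offset your peeling needs) plus a second centered process over $\G_{f^*}$ that is handled by a second application of the same localization lemma. Without this contraction step, shell peeling alone does not control the process.

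For \eqref{controloffixedpoint}, your upper bound via Haussler/Sauer plus a star-number disagreement estimate is close in spirit to the paper's (which routes through Alexander's capacity and the identity $\sup_{P_X,f^*}\tau(\varepsilon)=\mathbf{s}\wedge\varepsilon^{-1}$). Your lower-bound strategy, however, is entirely different and considerably more delicate than necessary: you propose explicit Hamming-ball packings via embedded shattered sets and star configurations, whereas the paper argues \emph{indirectly}. Having already proved that $\gamma^{\text{loc}}_{h,h}(n)/n$ upper-bounds the worst-case expected excess risk of every ERM, any known minimax or ERM-specific lower bound on that risk (from \cite{Massart06,Hanneke15a}) is automatically a lower bound on $\gamma^{\text{loc}}_{h,h}(n)$. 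This is essentially a one-line argument once \eqref{expectbound} is in hand, and it bypasses the scale-matching your direct construction would require (getting the ball radius $\varepsilon/h$, the packing scale $\varepsilon/2$, and block sizes $\simeq 1/h$ to cooperate inside $n$ points is not as immediate as your sketch suggests).
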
 

Our complexity term \eqref{controloffixedpoint} is not worse than the distribution-free upper bound \eqref{Ginecorollary} implied by the bound \eqref{koltchbound} of Gin\'e and Koltchinskii when $h$ is bounded from $0$ by a constant. In the last section we will discuss potential suboptimality when $h$ is small, due to the term $\frac{d\log\left(\frac{1}{h}\right)}{h}$ in \eqref{controloffixedpoint}. Another interesting property is that the bounds \eqref{expectbound} and \eqref{devbound} involve neither the VC dimension nor the star number explicitly. At the same time one can control the complexity term with both of them from below and above. 

For any given $f \in \F$, denote $g_{f}(x,y) = \Ind[f(x)\neq y] - \Ind[f^{*}(x)\neq y]$.
Consider the \emph{excess loss class} 
$\mathcal{G}_{\mathcal{Y}} = \{ g_{f} | f \in \F\}$, the class $\mathcal{G}_{f^{*}} = \{x \to \Ind[f(x)\neq f^{*}(x)]\ |\ f \in \F\}$
and the class $\F^{*} = \frac{1}{2}(\F - f^*)$. The last class consists of functions of the form $\frac{1}{2}(f - f^*)$ for $f \in \F$. The following properties are well known.
\begin{enumerate}
\item For any $g_{f} \in \mathcal{G}_{\mathcal{Y}}$ it holds $g_{f}^2(x, y) = \Ind[f(x) \neq f^{*}(x)] = \frac{1}{2}|f(x) - f^{*}(x)| = \frac{1}{4}(f(x) - f^{*}(x))^2$.
\item For any $g_{f} \in \mathcal{G}_{\mathcal{Y}}$ it holds $g_{f}(x, y) = \frac{y(f^{*}(x) - f(x))}{2}$.
\item For any  $P \in \mathcal{P}(h, \F)$ the class $\mathcal{G}_{\mathcal{Y}}$ is a $(\frac{1}{h}, 1)$-Bernstein class \cite{Boucheron05} and $R(f^{*}) \le \frac{1}{2}(1 - h)$ \cite{Devr95}.
\end{enumerate}

\begin{lemma}[Contraction]
\label{contraction}
Let $\G_{\mathcal{Y}}$ be an excess loss class associated with a given class $\F$, and fix any $h \in [0, 1]$. For any $c \in [0, 1]$ and any distribution $P \in \mathcal{P}(h, \F)$ we have conditionally on $X_1,\ldots,X_n$
\begin{align*}
&\E_{Y|X}\E_{\varepsilon}\sup\limits_{g \in \G_{\mathcal{Y}}}\left(\sum\limits_{i = 1}^{n}\varepsilon_{i}g(X_{i}, Y_{i})- cg(X_{i}, Y_{i})\right) 
\\
&\le \E_{\varepsilon}\sup\limits_{f'\in \F^*}\left(\sum\limits_{i = 1}^{n}\varepsilon_{i}f'(X_i) - \frac{1}{2}hc|f'(X_i)|\right)
\\
&\quad+\frac{3c}{2}\E_{\xi}\sup\limits_{g'\in \G_{f^*}}\left(\sum\limits_{i = 1}^{n}\xi_{i}g'(X_i) - \frac{1}{3}hg'(X_i)\right)
\end{align*}
where $\xi_{1}, \ldots, \xi_{n}$ are random variables conditionally independent given $X_1,\ldots,X_n$, with $\E[ \xi_i | X_1,\ldots,X_n ] = 0$ and $\E[\exp(\lambda\xi_{i})|X_1,\ldots,X_n] \le \exp(\frac{\lambda^{2}}{2})$ for all $\lambda$. Moreover, for all $x > 0$
\begin{align*}
&\Prob_{Y|X, \varepsilon}\left(\sup\limits_{g \in \G_{\mathcal{Y}}}\left(\sum\limits_{i = 1}^{n}\varepsilon_{i}g(X_{i}, Y_{i})- cg(X_{i}, Y_{i})\right) 
 \ge x\right) 
\\
&\le \Prob_{\varepsilon}\left(\sup\limits_{f'\in \F^*}\left(\sum\limits_{i = 1}^{n}\varepsilon_{i}f'(X_i) - \frac{1}{2}hc|f'(X_i)|\right) \ge \frac{x}{2}\right)
\\
&\quad+ \Prob_{\xi}\left(\sup\limits_{g'\in \G_{f^*}}\left(\sum\limits_{i = 1}^{n}\xi_{i}g'(X_i) - \frac{1}{3}hg'(X_i)\right) \ge \frac{x}{3c}\right)
\end{align*}
\end{lemma}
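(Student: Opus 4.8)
The plan is to argue entirely conditionally on $X_1,\ldots,X_n$ and to reduce the offset loss process to the two target processes via two sample-wise identities plus the noise condition. For $f\in\F$ write $f'_f=\tfrac12(f-f^*)\in\F^*$ and $g'_f=\Ind[f\neq f^*]\in\G_{f^*}$. Using properties (1)--(2), for each coordinate $i$ one has $g_f(X_i,Y_i)=-Y_if'_f(X_i)$, $|f'_f(X_i)|=g'_f(X_i)=g_f^2(X_i,Y_i)$, and the key sign identity $f'_f(X_i)=-f^*(X_i)\,g'_f(X_i)$ (on the disagreement points $f(X_i)=-f^*(X_i)$, so $\tfrac12(f-f^*)(X_i)=-f^*(X_i)$, a sign that does not depend on $f$). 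Combining this with $\eta(X_i)=f^*(X_i)|\eta(X_i)|$ and $|\eta(X_i)|\ge h$, and using $c,|f'_f(X_i)|\ge 0$, gives the deterministic per-coordinate bound $-c\,g_f(X_i,Y_i)=c\,\eta(X_i)f'_f(X_i)+c\,(Y_i-\eta(X_i))f'_f(X_i)\le -c\,h\,g'_f(X_i)+c\,(Y_i-\eta(X_i))f'_f(X_i)$.

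Next I would split the offset penalty $-ch\,g'_f(X_i)$ into two halves and distribute, obtaining $\sum_i\varepsilon_ig_f(X_i,Y_i)-c\,g_f(X_i,Y_i)\le\big[\sum_i\varepsilon_ig_f(X_i,Y_i)-\tfrac{ch}{2}g'_f(X_i)\big]+\big[\sum_i c(Y_i-\eta(X_i))f'_f(X_i)-\tfrac{ch}{2}g'_f(X_i)\big]$, then take $\sup_f$ and use $\sup(A_f+B_f)\le\sup A_f+\sup B_f$. In the first bracket, $\varepsilon_ig_f(X_i,Y_i)=(-\varepsilon_iY_i)f'_f(X_i)$, and since $(-\varepsilon_iY_i)_i$ is an i.i.d.\ Rademacher vector conditionally on $X$ (a sign flip of a Rademacher vector is Rademacher, irrespective of the flips), taking $\E_{Y|X}\E_\varepsilon$ and using $|f'_f(X_i)|=g'_f(X_i)$ together with the fact that $f'_f$ ranges over $\F^*$ turns this bracket into exactly $\E_\varepsilon\sup_{f'\in\F^*}\big(\sum_i\varepsilon_if'(X_i)-\tfrac12hc|f'(X_i)|\big)$. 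In the second bracket, the sign identity $f'_f(X_i)=-f^*(X_i)g'_f(X_i)$ rewrites it as $c\sum_i\xi_ig'_f(X_i)-\tfrac{ch}{2}g'_f(X_i)$ with $\xi_i:=-f^*(X_i)(Y_i-\eta(X_i))$; these are conditionally independent given $X$, conditionally mean zero, and (Hoeffding's lemma, $Y_i-\eta(X_i)$ lying in an interval of length $2$) satisfy $\E[e^{\lambda\xi_i}\mid X]\le e^{\lambda^2/2}$. Taking $\sup_f=\sup_{g'\in\G_{f^*}}$ and $\E_{Y|X}$ yields $c\,\E_\xi\sup_{g'\in\G_{f^*}}\big(\sum_i\xi_ig'(X_i)-\tfrac h2 g'(X_i)\big)$; since $0\in\G_{f^*}$ (because $f^*\in\F$) this supremum is nonnegative, so weakening the penalty from $\tfrac h2$ to $\tfrac h3$ only increases it, and together with $c\le\tfrac{3c}{2}$ this bounds the second bracket by $\tfrac{3c}{2}\E_\xi\sup_{g'\in\G_{f^*}}\big(\sum_i\xi_ig'(X_i)-\tfrac13h g'(X_i)\big)$, giving the expectation claim. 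For the deviation claim I would run the same chain, replacing the two expectations by the event inclusion $\{A_f+B_f\ge x\}\subseteq\{\sup A_f\ge x/2\}\cup\{\sup B_f\ge x/2\}$, the distributional identity $(-\varepsilon_iY_i)_i\stackrel{d}{=}(\varepsilon_i)_i$, and $\{c\sup_{g'}(\textstyle\sum\xi_ig'-\tfrac h2g')\ge x/2\}\subseteq\{\sup_{g'}(\textstyle\sum\xi_ig'-\tfrac h3g')\ge x/(3c)\}$.

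The computations are routine; the one step deserving care is the very first one, where the offset term $-c\,g_f$ is linearized into the deterministic drift $c\,\eta(X_i)f'_f(X_i)$ plus the martingale-difference remainder $c\,(Y_i-\eta(X_i))f'_f(X_i)$, and one must notice that the drift is automatically a \emph{favorable} penalty of magnitude at least $ch\,g'_f(X_i)$ because, under Massart's condition, $\eta$ and $f-f^*$ point in opposite directions on the disagreement region. This is exactly the ``contraction'' that simultaneously passes from $\G_{\mathcal Y}$ to $\F^*$ and $\G_{f^*}$ and replaces the offset $-c\,g_f$ by the cleaner offsets $-\tfrac12hc|f'|$ and $-\tfrac13hg'$; the sign identity $f'_f(X_i)=-f^*(X_i)g'_f(X_i)$ is what makes the ensuing symmetrization steps exact rather than lossy.
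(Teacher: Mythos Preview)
Your proof is correct and follows essentially the same route as the paper's: both decompose the offset $-cg_f$ into a deterministic drift (bounded via the margin condition) plus a centered noise term, then split the supremum into the two target processes and handle the first via the Rademacher symmetry $(-\varepsilon_iY_i)\stackrel{d}{=}(\varepsilon_i)$. Your choice $\xi_i=-f^*(X_i)(Y_i-\eta(X_i))$ is in fact identical to the paper's $\xi^{(h')}_i$; the only difference is that you apply Hoeffding's lemma with the \emph{conditional} range of length $2$ to get $\E[e^{\lambda\xi_i}\mid X]\le e^{\lambda^2/2}$ directly, whereas the paper uses the looser unconditional range $[-1,2]$ and then rescales by $2/3$---your bookkeeping is slightly cleaner, but the argument is the same.
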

\begin{proof}
First we notice that any $g \in \mathcal{G}_{\mathcal{Y}}$ may be defined by some $f \in \F$.
\begin{align*}
&\E_{Y|X}\E_{\varepsilon}\sup\limits_{g \in \G_{\mathcal{Y}}}\left(\sum\limits_{i = 1}^{n}\varepsilon_{i}g(X_{i}, Y_{i})- cg(X_{i}, Y_{i})\right)
\\
&= \E_{Y|X}\E_{\varepsilon}\sup\limits_{f \in \mathcal{F}}\left(\sum\limits_{i = 1}^{n}\frac{1}{2}\varepsilon_{i}Y_{i}(f(X_i) - f^{*}(X_i)) - cg_{f}(X_{i}, Y_{i})\right)
\\
&= \E_{Y|X}\E_{\varepsilon}\sup\limits_{f \in \mathcal{F}}\left(\sum\limits_{i = 1}^{n}\frac{1}{2}\varepsilon_{i}(f(X_i) - f^{*}(X_i)) - cg_{f}(X_{i}, Y_{i})\right)
\\
&= \frac{1}{2}\E_{Y|X}\E_{\varepsilon}\sup\limits_{f \in \mathcal{F}}\left(\sum\limits_{i = 1}^{n}\varepsilon_{i}(f(X_i) - f^{*}(X_i)) - 2cg_{f}(X_{i}, Y_{i})\right).
\end{align*}
Now consider the term $-\sum\limits_{i = 1}^{n}g(X_{i}, Y_{i})$.  Denoting $h'_i = 1-2P(f^{*}(X_i) \neq Y_i | X_i)$ (an $X_i$-dependent random variable), we know that $1 \ge h'_i \ge h$ almost surely.  Furthermore, the event that $f^{*}(X_i) \neq Y_i$ has conditional probability (given $X_i$) equal $\frac{1}{2}(1-h'_i)$, and on this event we have $\frac{1}{2}|f(X_i) - f^*(X_i)| = -g(X_{i}, Y_{i})$.  Similarly, the event that $f^{*}(X_i) = Y_i$ occurs with conditional probability (given $X_i$) equal $\frac{1}{2}(1+h'_i)$, and on this event we have $\frac{1}{2}|f(X_i) - f^*(X_i)| = g(X_{i}, Y_{i})$.  Thus, defining $\xi^{(h')}_{i} = h'_i + \Ind[f^{*}(X_i) \neq Y_i] - \Ind[f^{*}(X_i) = Y_i]$, these $\xi^{(h')}_1,\ldots,\xi^{(h')}_n$ random variables are conditionally independent given $X_1,\ldots,X_n$, with $\E[ \xi^{(h')}_{i} | X_1,\ldots,X_n ] = 0$.  
In particular, if $h'_i = 0$ for all $i$, these are Rademacher random variables, while if $h'_{i} = 1$ these random variables are equal to $0$ with probability $1$.  
Now note that, by the above reasoning about these events, 
\begin{align*}
-\sum\limits_{i = 1}^{n}g(X_{i}, Y_{i}) &= -\sum\limits_{i=1}^{n} \frac{h'_i}{2}|f(X_i) - f^*(X_i)|  + \sum\limits_{i=1}^{n} \frac{\xi^{(h')}_{i}}{2}|f(X_i) - f^*(X_i)| 
\\
&\le -(\min\limits_{i}h'_{i})\sum\limits_{i = 1}^{n}\frac{1}{2}|f(X_i) - f^*(X_i)| + \sum\limits_{i = 1}^{n}\frac{\xi^{(h')}_{i}}{2}|f(X_i) - f^*(X_i)|.
\end{align*}
Using the fact that $h \le h'_i$ almost surely, we have
\begin{align*}
&\frac{1}{2}\E_{Y|X}\E_{\varepsilon}\sup\limits_{f \in \F}\left(\sum\limits_{i = 1}^{n}\varepsilon_{i}(f(X_i) - f^*(X_i)) - 2cg_{f}(X_{i}, Y_{i})\right)
\\
&\le\E_{\xi}\E_{\varepsilon}\sup\limits_{f'\in \F^*}\left(\sum\limits_{i = 1}^{n}\varepsilon_{i}f'(X_i) + c\xi^{(h')}_{i}|f'(X_i)| - hc|f'(X_i)|\right)
\\
&\le\E_{\varepsilon}\sup\limits_{f'\in \F^*}\left(\sum\limits_{i = 1}^{n}\varepsilon_{i}f'(X_i) - \frac{1}{2}hc|f'(X_i)|\right) 
\\
&\quad+c\E_{\xi}\sup\limits_{f'\in \F^*}\left(\sum\limits_{i = 1}^{n}\xi^{(h')}_{i}|f'(X_i)| - \frac{1}{2}h|f'(X_i)|\right).
\end{align*}
Finally, we have $-1 \le \xi^{(h_i)}_i \le 2$, Hoeffding's lemma (\cite{Devr95} Lemma 8.1) implies $\E[\exp(\lambda\xi^{(h_i)}_i)|X_1,\ldots,X_n] \le \exp(9\lambda^{2}/8)$. The first claim of the Lemma easily follows, taking $\xi_i = \frac{2}{3}\xi^{(h_i)}_i $.

For the proof of the second claim we repeat almost the same steps. Observe that
$\sup\limits_{g \in \G_{\mathcal{Y}}}\left(\sum\limits_{i = 1}^{n}\varepsilon_{i}g(X_{i}, Y_{i})- cg(X_{i}, Y_{i})\right)$
has the same distribution (given $X_{1}, \ldots, X_n$) as 
$\frac{1}{2}\sup\limits_{f \in \mathcal{F}}\left(\sum\limits_{i = 1}^{n}\varepsilon_{i}(f(X_i) - f^{*}(X_i)) - 2cg_{f}(X_{i}, Y_{i})\right).$
Finally, using the definition of $\xi_{i}$ we have almost surely (once again given $X_{1}, \ldots, X_n$)
\begin{align*}
&\frac{1}{2}\sup\limits_{f \in \mathcal{F}}\left(\sum\limits_{i = 1}^{n}\varepsilon_{i}(f(X_i) - f^{*}(X_i)) - 2cg_{f}(X_{i}, Y_{i})\right)
\\
&\le\sup\limits_{f'\in \F^*}\left(\sum\limits_{i = 1}^{n}\varepsilon_{i}f'(X_i) - \frac{1}{2}hc|f'(X_i)|\right) 
\\
&\quad+c\E_{\xi}\sup\limits_{f'\in \F^*}\left(\sum\limits_{i = 1}^{n}\xi^{(h')}_{i}|f'(X_i)| - \frac{1}{2}h|f'(X_i)|\right).
\end{align*}
The second claim of the Lemma follows.
\end{proof}
Recall that $\mathcal{G}_{f^{*}} = \{x \to \Ind[f(x)\neq f^{*}(x)]\ |\ f \in \F\}$
and $\F^{*} = \frac{1}{2}(\F - f^*)$.
\begin{lemma}[Localization]
\label{localization}
Given the class of classifiers $\F$ let $\G = \F^*$ or $\G = \G_{f^*}$, and let $c \in [0, \frac{1}{4}]$ be a constant. Let $\xi_1,\ldots,\xi_n$ be any random variables conditionally independent given $X_1,\ldots,X_n$, with $|\xi_i| \lesssim 1$, and with $\E[ \xi_i | X_1,\ldots,X_n ] = 0$ and $\E[ \exp(\lambda \xi_i) | X_1,\ldots,X_n ] \leq \exp(\frac{\lambda^2}{2})$ for all $\lambda$. Then if $\G$ contains the zero function 
\[
\frac{1}{n}\E_{\xi}\sup\limits_{g \in \G}\left(\sum\limits_{i = 1}^{n}\xi_{i}g(X_{i})- 4c|g(X_{i})|\right) \lesssim \frac{\gamma^{\text{\emph{loc}}}_{c, c}(n, \F)}{n}.
\]
\end{lemma}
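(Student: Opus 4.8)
The plan is to work conditionally on $X_1,\dots,X_n$, pass to the finite set of sample‑projections $V=\{(g(X_1),\dots,g(X_n)):g\in\G\}\subseteq\{-1,0,1\}^n$, and prove the deterministic bound $\E_\xi\max_{v\in V}\sum_{i=1}^n(\xi_i v_i-4c|v_i|)\lesssim\gamma^{\text{loc}}_{c,c}(n,\F)$; dividing by $n$ gives the lemma (we may assume $c>0$, the case $c=0$ being vacuous). The key structural observation is that for both $\G=\F^*$ and $\G=\G_{f^*}$ the map sending $f\in\F$ to its projection in $V$ is a $\rho_H$‑isometry onto $V$, since two such vectors differ exactly in the coordinates where $f,f'$ disagree. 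Hence $0\in V$, the Hamming weight $\|v\|_0:=|\{i:v_i\ne0\}|$ equals $\sum_i|v_i|=\rho_H(f,f^*)$, and every Hamming ball inside $V$ is an isometric copy of a Hamming ball in $\F$; in particular, taking $\varepsilon=c\rho$ in \eqref{localentropy}, for any $v_0\in V$ and $\rho>0$ one has $\mathcal M_1(\mathcal B_H(v_0,\rho)\cap V,\,c\rho/2)\le\mathcal M^{\text{loc}}_1(\F,\lfloor c\rho\rfloor,n,c)$. Finally, by the definition of the fixed point, $\log\mathcal M^{\text{loc}}_1(\F,\gamma,n,c)<c\gamma$ for every integer $\gamma>\gamma^{\text{loc}}_{c,c}(n,\F)$.

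Write $\gamma^*=\gamma^{\text{loc}}_{c,c}(n,\F)$, $R=\lfloor(\gamma^*+1)/c\rfloor$, and split $V$ into the \emph{core} $\{v:\|v\|_0\le R\}$ and the geometric \emph{shells} $S_j=\{v:(1+\delta)^{j-1}R<\|v\|_0\le\rho_j\}$, $\rho_j:=(1+\delta)^jR$, $j\ge1$, for a small fixed constant $\delta$ (e.g. $\delta=\tfrac12$). For the core I would take a maximal $\tfrac12(\gamma^*+1)$‑packing $\mathcal N$ of $\mathcal B_H(0,(\gamma^*+1)/c)\cap V$: by the previous paragraph ($\varepsilon=\gamma^*+1$) it is also a cover and $\log|\mathcal N|<c(\gamma^*+1)$ since $\gamma^*+1>\gamma^*$. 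Writing $v=\pi(v)+(v-\pi(v))$ with $\pi(v)\in\mathcal N$, $\|v-\pi(v)\|_0\le\tfrac12(\gamma^*+1)$, the residual $\sum_i\xi_i(v-\pi(v))_i\le(\sup_i|\xi_i|)\,\|v-\pi(v)\|_1\lesssim\gamma^*$ deterministically (here $\|v-\pi(v)\|_1=\|v-\pi(v)\|_0$ because $v-\pi(v)\in\{-1,0,1\}^n$), and $\|v\|_0\ge\|\pi(v)\|_0-\tfrac12(\gamma^*+1)$ transfers the offset onto $\pi(v)$ at cost $O(\gamma^*)$ (using $c\le\tfrac14$). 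So the core contributes at most $\E_\xi\max_{w\in\mathcal N}\sum_i(\xi_iw_i-4c|w_i|)+O(\gamma^*)$, and the first term is $\le\frac{\log|\mathcal N|}{4c}<\tfrac14(\gamma^*+1)$ by the exponential‑moment argument behind Lemma~\ref{expectmax} (with $\E[e^{\lambda\xi_i}]\le e^{\lambda^2/2}$ and $\lambda=4c$ it goes through verbatim). Thus the core contributes $\lesssim\gamma^*$ in expectation, and it is $\ge0$ because $0\in V$.

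For shell $S_j$ I would take a maximal $(c\rho_j/2)$‑packing $\mathcal N_j\subseteq\mathcal B_H(0,\rho_j)\cap V$ (again a cover); since $\lfloor c\rho_j\rfloor\ge\lfloor cR\rfloor\ge\gamma^*+1>\gamma^*$, the fixed‑point inequality gives $\log|\mathcal N_j|\le\log\mathcal M^{\text{loc}}_1(\F,\lfloor c\rho_j\rfloor,n,c)<c\lfloor c\rho_j\rfloor\le c^2\rho_j$. For $v\in S_j$, using $\|v\|_0>\rho_j/(1+\delta)$ and $\|v-\pi(v)\|_1=\|v-\pi(v)\|_0\le c\rho_j/2$,
\[
\sum_i(\xi_iv_i-4c|v_i|)\ \le\ \sum_i\xi_i\pi(v)_i+\tfrac{C}{2}c\rho_j-\tfrac{4c}{1+\delta}\rho_j\ =\ \sum_i\xi_i\pi(v)_i-\beta c\rho_j,\qquad C:=\sup_i|\xi_i|,\ \ \beta:=\tfrac{4}{1+\delta}-\tfrac{C}{2},
\]
and for a suitable constant $\delta$ (with $\delta=\tfrac12$ and $|\xi_i|\le\tfrac43$, available in our applications, $\beta\ge2$) one has $\beta>\sqrt2$. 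Since $\sum_i\xi_i w_i$ is sub‑Gaussian with variance proxy $\|w\|_0\le\rho_j$, a union bound over $\mathcal N_j$ and the Chernoff tail give, for $t\ge0$, $\Prob(\max_{v\in S_j}\sum_i(\xi_iv_i-4c|v_i|)>t)\le|\mathcal N_j|\,e^{-(\beta c\rho_j+t)^2/(2\rho_j)}\le e^{\log|\mathcal N_j|-\beta^2c^2\rho_j/2}e^{-\beta ct}<e^{-c^2\rho_j}e^{-\beta ct}$, using $\beta^2/2>1$. Because $\rho_j\ge R\ge\gamma^*/c$ and $c\gamma^*\ge\tfrac12$, one has $c^2\rho_j\ge\tfrac12(1+\delta)^j$, so $\sum_{j\ge1}e^{-c^2\rho_j}\le\sum_{j\ge1}e^{-(1+\delta)^j/2}$ is an absolute constant; hence $\Prob(\max_{j\ge1}\max_{v\in S_j}\sum_i(\xi_iv_i-4c|v_i|)>t)\lesssim e^{-\beta ct}$, and integrating, the expected positive part is $\lesssim\tfrac1c\lesssim\gamma^*$. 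Combining with the core via $\max(a,b)\le a+b_+$ (legitimate since the core term $a\ge0$) gives $\E_\xi\max_{v\in V}\sum_i(\xi_iv_i-4c|v_i|)\lesssim\gamma^*$.

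The hard part is the constant bookkeeping sketched above. The relation $\sqrt{\rho\log\mathcal M^{\text{loc}}_1(\F,\cdot,n,c)}\lesssim c\rho$ that the offset $4c\|v\|_0$ must defeat is scale‑invariant and essentially tight, which forces three choices: (i) peel with ratio $1+\delta$ close to $1$, so that on $S_j$ the weight $\|v\|_0$ is within a factor $1+\delta$ of $\rho_j$ and almost the full offset $\tfrac{4c\rho_j}{1+\delta}$ is available — dyadic peeling only yields $2c\rho_j$ and fails; (ii) use the one‑step offset estimate of Lemma~\ref{expectmax}, whose $1/c$ dependence is sharp, not a $\sqrt{\log N}$ chaining bound; (iii) use $c\le\tfrac14$ and $|\xi_i|\lesssim1$ exactly to arrange $\beta>\sqrt2$, since it is precisely the gap $\beta^2/2>1$ — together with $\log|\mathcal N_j|<c^2\rho_j$, which is where the definition of $\gamma^{\text{loc}}_{c,c}$ enters, and is invoked for every shell — that makes each shell contribute negatively with a geometrically summable tail. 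The isometry, the reduction to $V$, and the moment/union‑bound estimates are otherwise routine.
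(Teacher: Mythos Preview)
Your proposal is essentially correct and takes a genuinely different route from the paper's own proof. The paper first passes to a \emph{single} $\gamma$-cover $\mathcal N_\gamma$ of all of $V$ (with $\gamma=K\gamma^{\text{loc}}_{c,c}(n)$ for a very large $K=2^9$), and only then peels the cover \emph{dyadically} in Hamming weight. The shell cardinalities are controlled not by re-covering at each scale, but by a submultiplicativity/doubling inequality $|\mathcal N_\gamma[0,2^{k+1}\gamma/c]|\le(\mathcal M^{\text{loc}}_1(\F,2\gamma,n,c))^{2^{k+1}}$ (an iterated-covering argument in the spirit of Lemma~2.2 of \cite{Mendelson15}), which reduces everything to the local entropy at the \emph{single} scale $2\gamma$; the large multiplier $K$ is what makes the constants close with dyadic shells. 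You instead peel $V$ first with ratio $1+\delta$ and cover each shell at its own scale $c\rho_j/2$, invoking the fixed-point inequality $\log\mathcal M^{\text{loc}}_1(\F,\lfloor c\rho_j\rfloor,n,c)<c\lfloor c\rho_j\rfloor$ separately at every $j$. This is more elementary (no doubling lemma, no large $K$), at the price of the delicate $(1+\delta)$-peeling you correctly identify as necessary.

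One small slip: for $\G=\F^*$ the vectors take values in $\{-1,0,1\}$, so $v-\pi(v)\in\{-2,-1,0,1,2\}^n$ and only $\|v-\pi(v)\|_1\le 2\rho_H(v,\pi(v))$ holds; your equality $\|v-\pi(v)\|_1=\|v-\pi(v)\|_0$ is valid only for $\G=\G_{f^*}$. This doubles the residual in the $\F^*$ case and tightens your $\beta$; the fix is simply to shrink $\delta$ (as you already anticipate), so the argument survives unchanged.
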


The proof of this lemma is deferred to the appendix.
We are now ready for the proof of Theorem~\ref{mainupperbound}.

\begin{proof}[Theorem \ref{mainupperbound}]
Let $\hat{f}$ be an ERM and $\hat{g}$ be a corresponding function in the excess loss class $\G_{\mathcal{Y}}$. We obviously have $\E(R(\hat{f}) - R(f^{*})) = \E P\hat{g}$ and $P_{n}\hat{g} \le 0$. Then for any $c > 0$,
\[
\E(R(\hat{f}) - R(f^{*})) \le \E(P\hat{g} - (1 + c)P_{n}\hat{g}) \le \E\sup\limits_{g \in \G_{\mathcal{Y}}}(Pg - (1+c)P_{n}g).
\]
Now using the symmetrization lemma (Lemma \ref{symmetrization}) we have
\[
\E\sup\limits_{g \in \G_{\mathcal{Y}}}(Pg - (1+c)P_{n}g) \le \frac{c + 2}{n}\E\E_{\varepsilon}\sup\limits_{g \in \G_{\mathcal{Y}}}\left(\sum\limits_{i = 1}^{n}\varepsilon_{i}g(X_{i}, Y_{i})- \frac{c}{c + 2}g(X_{i}, Y_{i})\right).
\]
Applying the contraction lemma (Lemma \ref{contraction})
\begin{align*}
&\frac{c + 2}{n}\E\E_{\varepsilon}\sup\limits_{g \in \G_{\mathcal{Y}}}\left(\sum\limits_{i = 1}^{n}\varepsilon_{i}g(X_{i}, Y_{i})- \frac{c}{c + 2}g(X_{i}, Y_{i})\right) 
\\
&\le
\frac{(c + 2)}{n}\E\E_{\varepsilon}\sup\limits_{f'\in \F^*}\left(\sum\limits_{i = 1}^{n}\varepsilon_{i}f'(X_i) - \frac{hc}{2(c + 2)}|f'(X_i)|\right)
\\
&\quad+
\frac{3c}{2n}\E\E_{\xi}\sup\limits_{f'\in \F^*}\left(\sum\limits_{i = 1}^{n}\xi_{i}|f'(X_i)| - \frac{1}{3}h|f'(X_i)|\right).
\end{align*}
We are ready to apply the localization lemma (Lemma \ref{localization}). 
%
The conditions on the $\xi_i$ and $\varepsilon_i$ variables required for Lemma~\ref{localization} are supplied by Lemma~\ref{contraction}, and all functions in $\F^*$ take only $\{-1, 0 , 1\}$ values. Thus, for a fixed $c$,
\[
\frac{(c + 2)}{n}\E\E_{\varepsilon}\sup\limits_{f'\in \F^*}\left(\sum\limits_{i = 1}^{n}\varepsilon_{i}f'(X_i) - \frac{hc}{2(c + 2)}|f'(X_i)|\right)  \lesssim \frac{\gamma^{\text{loc}}_{h, h}(n)}{n}.
\]
The same bound holds for $\frac{3c}{2n}\E\E_{\xi}\sup\limits_{f'\in \F^*}\left(\sum\limits_{i = 1}^{n}\xi_{i}|f'(X_i)| - \frac{1}{3}h|f'(X_i)|\right)$.
The proof of the deviation bound is analogous, and is presented in the appendix.  The claimed bounds on $\gamma^{\text{loc}}_{h,h}(n)$ are established in Proposition~\ref{fixedcontrol} below.
\end{proof}
The following proposition finishes the proof of Theorem \ref{mainupperbound}.
\begin{proposition}
\label{fixedcontrol}
Let $d$ be the VC-dimension and $\mathbf{s}$ be the star number of $\F$. For any $h \in (0, 1]$, it holds 
\[
 \frac{d + \log\left(nh^2 \wedge \mathbf{s}\right)}{h} \wedge \sqrt{dn} \lesssim \gamma^{\text{\emph{loc}}}_{h, h}(n) \lesssim \frac{d\log\left(\frac{nh^2}{d} \wedge \mathbf{s}\right)}{h} + \frac{d\log(\frac{1}{h})}{h}. 
\]
\end{proposition}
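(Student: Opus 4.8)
The plan is to establish the two inequalities separately, each one reducing to an estimate on the local empirical packing number $\mathcal{M}^{\text{loc}}_{1}(\F,\gamma,n,h)$; the basic structural fact we exploit is that inside the ball appearing in its definition the ratio of the radius $\varepsilon/h$ to the packing scale $\varepsilon/2$ equals $\frac{2}{h}$, independently of $\varepsilon$. For the upper bound I would bound $\log\mathcal{M}_{1}(\mathcal{B}_{H}(f,\varepsilon/h,\{x_{i}\}),\varepsilon/2)$ in two ways and take the smaller. First, since restricting a VC-dimension-$d$ class to a subset of a projection cannot increase the VC dimension, Haussler's packing bound applied to the full projected class gives $\log\mathcal{M}_{1}\lesssim d\log(n/\varepsilon)$. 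Second, to bring in the star number, apply the identity $\sup_{f^{*},P_{X}}\tau(\varepsilon)=\mathbf{s}\wedge\frac{1}{\varepsilon}$ of Hanneke and Yang with $P_{X}$ equal to the empirical measure on the multiset $\{x_{i}\}$ and $f^{*}=f$: this shows that the data-induced region of disagreement $D$ of $\mathcal{B}_{H}(f,\varepsilon/h)$ meets at most $\mathbf{s}\varepsilon/h$ of the sample points (counted with multiplicity). Every ball element agrees with $f$ outside $D$, so restricting the empirical measure to $D$ and renormalizing turns the $\varepsilon/2$-packing into a $\frac{\varepsilon}{2|D|}$-packing of a VC-$d$ class under a probability measure, and Haussler's bound then gives $\log\mathcal{M}_{1}\lesssim d\log(|D|/\varepsilon)\lesssim d\log(\mathbf{s}/h)$. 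Combining the two, $\log\mathcal{M}^{\text{loc}}_{1}(\F,\gamma,n,h)\lesssim d\log\!\big((n/\gamma)\wedge(\mathbf{s}/h)\big)$; substituting this into the defining inequality $h\gamma\le\log\mathcal{M}^{\text{loc}}_{1}$ and resolving the resulting self-referential bound for $\gamma$ — using $n/\gamma\lesssim nh/d$ once $\gamma$ has the target order and the factorization $\frac{nh}{d}\wedge\frac{\mathbf{s}}{h}=\frac{1}{h}\big(\frac{nh^{2}}{d}\wedge\mathbf{s}\big)$ to split off the $\log\frac{1}{h}$ — yields $\gamma^{\text{loc}}_{h,h}(n)\lesssim\frac{d\log(\frac{nh^{2}}{d}\wedge\mathbf{s})}{h}+\frac{d\log(1/h)}{h}$.

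For the lower bound I would, for $\gamma$ of the claimed order, exhibit concrete $\{x_{i}\}$, $f$ and $\varepsilon\ge\gamma$ with $\mathcal{M}_{1}(\mathcal{B}_{H}(f,\varepsilon/h),\varepsilon/2)\ge e^{h\gamma}$, and then add the two constructions, using $a+b\le 2(a\vee b)$. For the $\frac{d}{h}$ term, take a $d$-point shattered set with each point repeated $m\simeq\min(1/h,n/d)$ times among the sample points, so that Hamming distance on the sample is $m$ times Hamming distance on $[d]$; a Gilbert--Varshamov argument produces $2^{\Theta(d)}$ subsets of $[d]$ that are pairwise $\Theta(\varepsilon/m)$-separated and have cardinality at most $\varepsilon/(mh)$, hence lie in $\mathcal{B}_{H}(f,\varepsilon/h)$ once $\varepsilon\simeq dm$, which forces $h\gamma\simeq d\,hm\le d\le\log\mathcal{M}_{1}$. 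When $h<\sqrt{d/n}$ the ball with $\varepsilon\simeq\gamma$ already exhausts the whole projected class, and running the same count at scale $\varepsilon\simeq n$ gives $\gamma^{\text{loc}}_{h,h}(n)\gtrsim\sqrt{dn}$; this is the source of the $\wedge\sqrt{dn}$ and it is vacuous in the range of Theorem~\ref{mainupperbound}. For the $\frac{\log(nh^{2}\wedge\mathbf{s})}{h}$ term, use the star witnesses $f_{0},f_{1},\dots,f_{k}$ from the definition of $\mathbf{s}$: placing $k$ star points in the sample, each repeated $m$ times, gives $\rho_{H}(f_{0},f_{i})=m$ and $\rho_{H}(f_{i},f_{j})=2m$, so $\{f_{1},\dots,f_{k}\}$ is an $\frac{\varepsilon}{2}$-packing of $\mathcal{B}_{H}(f_{0},\varepsilon/h)$ as soon as $\frac{\varepsilon}{4}\le m\le\frac{\varepsilon}{h}$ and $km\le n$; taking $m\simeq n/k$ and optimizing over $k\le\mathbf{s}$ gives $\gamma^{\text{loc}}_{h,h}(n)\gtrsim\sup_{k\le\mathbf{s}}\min\!\big(\tfrac{\log k}{h},\tfrac{n}{k}\big)\gtrsim\frac{\log(nh\wedge\mathbf{s})}{h}\ge\frac{\log(nh^{2}\wedge\mathbf{s})}{h}$, the crossover $k\log k\simeq nh$ accounting for the truncation.

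I expect the bookkeeping in the lower bound to be the main obstacle: the repetition count $m$, the number of star points $k$ and the scale $\varepsilon$ must simultaneously satisfy the ball-membership, packing-separation and ``fits in $n$ sample points'' constraints while still reaching the target order, which forces a small case split, and the regime $h<\sqrt{d/n}$ (producing $\sqrt{dn}$) together with the boundary case $d\simeq n$ must be handled separately. On the upper-bound side the only genuine subtleties are resolving the self-referential inequality with clean constants and checking that the localized-disagreement estimate inherited from $\tau(\varepsilon)\le\mathbf{s}$ is uniform over the worst-case choice of sample points.
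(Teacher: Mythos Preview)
Your upper-bound argument is essentially the paper's: both arrive at the intermediate estimate $\log\mathcal{M}^{\text{loc}}_{1}(\F,\gamma,n,h)\lesssim d\log\bigl((n/\gamma)\wedge(\mathbf{s}/h)\bigr)$ and then resolve the fixed-point inequality the same way, including the factorization $\frac{nh}{d}\wedge\frac{\mathbf{s}}{h}=\frac{1}{h}\bigl(\frac{nh^{2}}{d}\wedge\mathbf{s}\bigr)$. The only difference is the tool used to reach that estimate: the paper adapts a probabilistic sampling argument from \cite{Hanneke15} (draw $m\simeq\frac{1}{r}\log|\mathcal{M}_r|$ points, argue that with positive probability the packing is distinguished on their intersection with the disagreement region, and apply the Sauer--Shelah growth bound), whereas you bound $|D|$ via $\tau\le\mathbf{s}$ and then apply Haussler's $L^{1}$ packing bound directly on the restriction to $D$. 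Your route is a bit shorter and avoids the auxiliary probabilistic construction; both rely on the same Hanneke--Yang inequality $\tau(\cdot)\le\mathbf{s}$.

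The lower bound, however, is handled very differently. The paper gives a completely \emph{indirect} argument: since \eqref{expectbound} has already been proved, $\gamma^{\text{loc}}_{h,h}(n)/n$ is (up to constants) a distribution-free upper bound on $\sup_{P\in\mathcal{P}(h,\F)}\E(R(\hat f)-R(f^{*}))$ for every ERM, so any known minimax or ERM-specific lower bound on that excess risk is automatically a lower bound on $\gamma^{\text{loc}}_{h,h}(n)$. The paper then simply cites the Massart--N\'ed\'elec bound $\frac{d+(1-h)\log(nh^{2}\wedge\mathbf{s})}{nh}$, the $\sqrt{d/n}$ bound for $h<\sqrt{d/n}$, and Hanneke's ERM lower bound $\frac{\log(n\wedge\mathbf{s})}{n}$ in the realizable case, and combines them. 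This is a three-line proof once \eqref{expectbound} is in hand. Your direct combinatorial constructions (Gilbert--Varshamov on a repeated shattered set for the $d/h$ term; repeated star witnesses for the logarithmic term) are correct in outline and have the virtue of being self-contained, but they require exactly the case analysis and parameter bookkeeping you anticipate, and in the end they reprove from scratch what the cited lower bounds already encode. The paper's trick of reading the lower bound off the upper bound via existing risk bounds is worth knowing; your approach, on the other hand, would be the one to use if those external lower bounds were not available.
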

\begin{proof}
The first part of the proof closely follows the proof of Theorem $17$ in \cite{Hanneke15}, with slight modifications, to arrive at an upper bound on $\mathcal{M}^{\text{loc}}_{1}(\F, \gamma, n, h)$. The suprema in the definition of local empirical entropy are achieved at some set $\{x_{1}, \ldots, x_{n}\}$, some function $f \in \F$, and some $\varepsilon \in [\gamma,n]$.
Letting $r = \varepsilon/n$, denote by $\mathcal{M}_r$ the maximal $(rn/2)$-packing (under $\rho_H$) of $\mathcal{B}_{H}(f,rn/h,\{x_1,\ldots,x_n\})$, so that $|\mathcal{M}_r| = \mathcal{M}^{\text{loc}}_{1}(\F,\gamma,n,h)$.  Also introduce a uniform probability measure $P_X$ on $\{x_1,\ldots,x_n\}$ and fix $m = \left\lceil\frac{4}{r}\log(|\mathcal{M}_r|)\right\rceil$. Let $X_{1}, \ldots, X_{m}$ be $m$ independent $P_X$-distributed random variables, and let $A$ denote the event that, for all $g,g' \in \mathcal{M}_{r}$ with $g \neq g'$, there exists an $i \in \{1, \ldots, n\}$ such that $g(X_{i}) \neq g'(X_{i})$. For a given pair of distinct functions $g,g' \in \mathcal{M}_r$, they disagree on some $X_i$ with probability
\[
1 - (1 - P_X(g(X) \neq g'(X)))^m  > 1 - \exp(-rm/2) \ge 1 - \frac{1}{|\mathcal{M}_r|^2}.
\]
Using a union bound and summing over all possible unordered pairs $g, g' \in \mathcal{M}_r$ will give us that $\mathbb{P}(A) > \frac{1}{2}$. On the event $A$, functions in $\mathcal{M}_{r}$ realize distinct classifications of $X_{1}, \ldots, X_{m}$. For any 
\[X_i \notin \dis(\mathcal{B}_{H}(f, rn/h, \{x_{1}, \ldots, x_{n}\}),
\] all classifiers in $\mathcal{M}_{r}$ agree. Thus, $|\mathcal{M}_r|$ is bounded by the number of different classifications $\{X_{1}, \ldots, X_{m}\} \cap \dis(\mathcal{B}_{H}(f, rn/h))$ realized by classifiers in $\F$. By the multiplicative Chernoff bound 
(see \cite{motwani:95}, Section 4.1), 
on an event $B$ with $\mathbb{P}(B) \ge \frac{1}{2}$ we have $
|\{X_{1}, \ldots, X_{m}\} \cap \dis(\mathcal{B}_{H}(f, rn/h))|  \le 
1 + 2eP_X(\dis(\mathcal{B}_{H}(f, rn/h))m.
$
Using the definition of 
$\tau(\cdot)$ (Definition \ref{def:alexander}) we have
\[
1 + 2eP_X(\dis(\mathcal{B}_{H}(f, rn/h)))m
\le 1 + 2e \tau\left(\frac{r}{h}\right) \frac{r}{h} m
\le 11e\tau\left(\frac{r}{h}\right)\frac{\log(|\mathcal{M}_{r}|)}{h}.
\]
With probability at least $\frac{1}{2}$,
\[
|\{X_{1}, \ldots, X_{m}\} \cap \dis(\mathcal{B}_{H}(f, rn/h))| \le 11e\tau\left(\frac{r}{h}\right)\frac{\log(|\mathcal{M}_{r}|)}{h}.
\]
Using the union bound, we have that with probability greater than zero there exists a sequence of at most $11e\tau\left(\frac{r}{h}\right)\frac{\log(|\mathcal{M}_r|)}{h}$ elements, such that all functions in $\mathcal{M}_r$ classify this sequence distinctly. By the Vapnik and Chervonenkis lemma, we therefore have that
\[
|\mathcal{M}_{r}| \le \left(\frac{11e^2\tau\left(\frac{r}{h}\right)\frac{\log(|\mathcal{M}_{r}|)}{h}}{d}\right)^{d}.
\]
Using Corollary $4.1$ from \cite{Vid03} we have
\[
\log(|\mathcal{M}_{r}|) \le 2d\log\left(11e^2\tau\left(\frac{r}{h}\right)\frac{1}{h}\right).
\]
Using $\tau\left(\frac{r}{h}\right) \le  \mathbf{s} \wedge \frac{h}{r} \leq \mathbf{s} \wedge \frac{nh}{\gamma}$ (Theorem $10$ in \cite{Hanneke15}) we finally have
\[
\log(\mathcal{M}^{\text{loc}}_{1}(\F, \gamma, n, h)) \le 2d\log\left(11e^2\left(\frac{n}{\gamma} \wedge \frac{\mathbf{s}}{h}\right)\right).
\]
Now we upper bound $\gamma^{\text{loc}}_{h, h}(n)$, knowing that
\[
h\gamma^{\text{loc}}_{h, h}(n) \le 2d\log\left(11e^2\left(\frac{n}{\gamma^{\text{loc}}_{h, h}(n)} \wedge \frac{\mathbf{s}}{h}\right)\right).
\]
We obviously have $\gamma^{\text{loc}}_{h, h}(n) \le \frac{2d\log\left(11e^2\frac{\mathbf{s}}{h}\right)}{h}$. For $\gamma = \frac{2d\log\left(11e^2\frac{nh}{d}\right)}{h}$ we have $h\gamma = 2d\log\left(11e^2\frac{nh}{d}\right)$, but $2d\log\left(11e^2\frac{n}{\gamma}\right) \le 2d\log\left(11e^2\frac{nh}{d}\right)$ if $h > \frac{d}{11en}$. Finally,  we have 
\[
\gamma^{\text{loc}}_{h, h}(n) \le \frac{2d\log\left(11e^2\left(\frac{nh}{d} \wedge \frac{\mathbf{s}}{h}\right)\right)}{h}.
\]
Now we prove the lower bound. 
From \eqref{expectbound} established above, we know that $\frac{\gamma^{\text{loc}}_{h, h}(n)}{n}$ is, up to an absolute constant, a distribution-free upper bound for $\E(R(\hat{f}) - R(f^{*}))$, holding for all ERM learners $\hat{f}$. Then any lower bound on $\sup\limits_{P \in \mathcal{P}(h,\F)} \E(R(\hat{f}) - R(f^{*}))$ holding for any ERM learner is also a lower bound for $\frac{\gamma^{\text{loc}}_{h, h}(n)}{n}$. In particular, it is known \cite{Massart06,Hanneke15a} that for any learning procedure $\tilde{f}$, if $h \geq \sqrt{\frac{d}{n}}$, then $\sup\limits_{P \in \mathcal{P}(h,\F)} \E(R(\tilde{f}) - R(f^{*})) \gtrsim \frac{d + (1-h)\log(n h^2 \wedge \mathbf{s})}{nh}$, while if $h < \sqrt{\frac{d}{n}}$ then $\sup\limits_{P \in \mathcal{P}(h,\F)} \E(R(\tilde{f}) - R(f^{*})) \gtrsim \sqrt{\frac{d}{n}}$.  Furthermore, in the particular case of ERM, \cite{Hanneke15a} proves that any upper bound on $\sup\limits_{P \in \mathcal{P}(1,\F)} \E(R(\hat{f}) - R(f^*))$ holding for all ERM learners $\hat{f}$ must have size, up to an absolute constant, at least $\frac{\log(n \wedge \mathbf{s})}{n}$.  
Together, these lower bounds imply $\gamma^{\text{loc}}_{h, h}(n) \gtrsim \frac{d + \log(n h^2 \wedge \mathbf{s})}{h} \wedge \sqrt{d n}$.
\end{proof}
\section{Minimax Lower Bound}
In this section we prove that under Massart's bounded noise condition, fixed points of the local empirical entropy appear in minimax lower bounds. Results are in expectation and generally use classic lower bound techniques from the literature \cite{Massart06,Raginsky11,Yang99}, previously used only for specific classes. We will need the following definition, which will be motivated below.
\begin{definition}
\label{pseudoconvex}
Fix a class of classifiers $\F$. Assume that there exists a positive constant $c \ge 1$ such that for any $N$ in the definition of $\mathcal{M}^{\text{\emph{loc}}}_{1}(\F, \gamma^{\text{\emph{loc}}}_{h, 1}(N), N, 1)$ the supremum with respect to the radius is achieved at some $\varepsilon_{h}(N) \le c \gamma^{\text{\emph{loc}}}_{h, 1}(N)$. This class will be referred to as $c$-\emph{pseudoconvex}.
\end{definition}
\begin{theorem}
\label{newlowerbound}
Let $\tilde{f}$ be the output of any learning algorithm. Fix any $c_{\F}$-pseudoconvex class $\F$ and any $h$ satisfying $\sqrt{\frac{d}{n}} \le h \le 1$. Then there exists a $P \in \mathcal{P}(h, \F)$ such that
\begin{equation}
\label{lowerbound}
\E(R(\tilde{f}) - R(f^{*})) \gtrsim \frac{d}{nh} + \frac{1}{c_{\F}}\frac{(1 - h)\gamma^{\text{\emph{loc}}}_{h, 1}\left(\lceil\frac{nc_{\F}h}{(1 - h)}\rceil \right)}{n}.
\end{equation}
\end{theorem}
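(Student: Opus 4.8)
The plan is to bound the two summands separately, each over its own distribution in $\mathcal{P}(h,\F)$, and then combine them via the fact that $\sup_{P\in\mathcal{P}(h,\F)}\E(R(\tilde f)-R(f^*))$ dominates each summand and hence their average. The term $\frac{d}{nh}$ is classical: take a $d$-point set shattered by $\F$, assign mass of order $\frac{1}{nh^2}$ to each of these points and the remaining mass to an ``anchor'' point on which all of $\F$ agrees, and let the conditional label at each shattered point be a $\pm h$-biased coin. Ranging over the $2^d$ sign patterns (each realized by some member of $\F$, so $f^*\in\F$ with $|\eta|\equiv h$), an Assouad argument gives $\E(R(\tilde f)-R(f^*))\gtrsim \frac{d}{nh}$ for every learner; this is the Massart--N\'ed\'elec lower bound, which I would cite rather than reprove. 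The substance is the second term.

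For the main term, set $N=\lceil\frac{nc_{\F}h}{1-h}\rceil$. By definition of the local empirical entropy together with $c_{\F}$-pseudoconvexity, the maximum defining $\mathcal{M}^{\text{loc}}_{1}(\F,\gamma^{\text{loc}}_{h,1}(N),N,1)$ is achieved at some distinct points $x_1,\ldots,x_N$, a centre $f_0\in\F$, and a radius $\varepsilon_\star:=\varepsilon_h(N)$ with $\gamma^{\text{loc}}_{h,1}(N)\le \varepsilon_\star\le c_{\F}\,\gamma^{\text{loc}}_{h,1}(N)$, and there is an $\varepsilon_\star/2$-separated family $g_1,\ldots,g_M\in\F$ (identified with their restrictions to $\{x_1,\ldots,x_N\}$) inside $\mathcal{B}_H(f_0,\varepsilon_\star,\{x_1,\ldots,x_N\})$, with $\log M\ge h\,\gamma^{\text{loc}}_{h,1}(N)$. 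Define $\{P_\theta\}_{\theta=1}^{M}$ by taking the marginal uniform on $\{x_1,\ldots,x_N\}$ and $P_\theta(Y=g_\theta(x_i)\mid X=x_i)=\frac{1+h}{2}$; then $|\eta_\theta|\equiv h$, the Bayes classifier under $P_\theta$ is $g_\theta\in\F$, so $P_\theta\in\mathcal{P}(h,\F)$, and for any output $\tilde f$, under $P_\theta$ we have $R(\tilde f)-R(f^*)=h\,P_X(\tilde f\neq g_\theta)=\frac{h}{N}\rho_H(\tilde f,g_\theta)$, while any two $g_\theta$ differ on more than $\varepsilon_\star/2$ of the $N$ points.

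It then suffices to show that no algorithm can reliably recover $\theta$, so that on average over a uniform $\theta$ its output is at $\rho_H$-distance $\gtrsim\varepsilon_\star$ from $g_\theta$; this gives $\E(R(\tilde f)-R(f^*))\gtrsim \frac{h\varepsilon_\star}{N}\ge \frac{h\,\gamma^{\text{loc}}_{h,1}(N)}{N}$, and since $h\ge\sqrt{d/n}$ forces $\frac{nc_{\F}h}{1-h}\ge 1$ and hence $N\le\frac{2nc_{\F}h}{1-h}$, the right-hand side is $\gtrsim \frac{1}{c_{\F}}\cdot\frac{(1-h)\gamma^{\text{loc}}_{h,1}(N)}{n}$. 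For the information bound I would use that the $P_\theta$ share a marginal, so for $\theta\neq\theta'$,
\[
\mathrm{KL}(P_\theta\,\|\,P_{\theta'})=\frac{\rho_H(g_\theta,g_{\theta'})}{N}\,\mathrm{kl}\!\left(\tfrac{1+h}{2}\,\Big\|\,\tfrac{1-h}{2}\right)=\frac{\rho_H(g_\theta,g_{\theta'})}{N}\,h\log\tfrac{1+h}{1-h}\le\frac{2\varepsilon_\star}{N}\,h\log\tfrac{1+h}{1-h},
\]
whence the mutual information $I$ between $\theta$ and the $n$-sample obeys $I\le n\cdot\tfrac{2\varepsilon_\star}{N}h\log\tfrac{1+h}{1-h}\le 2(1-h)\log\tfrac{1+h}{1-h}\cdot\gamma^{\text{loc}}_{h,1}(N)$ after inserting $N$ and $\varepsilon_\star\le c_{\F}\gamma^{\text{loc}}_{h,1}(N)$; dividing by $\log M\ge h\gamma^{\text{loc}}_{h,1}(N)$ yields $I/\log M\lesssim \tfrac{(1-h)\log\frac{1+h}{1-h}}{h}$.

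The main obstacle is exactly this last ratio: it is bounded by an absolute constant, but not obviously by a constant strictly below $1$, which is what a black-box Fano inequality needs to convert ``$\log M$ large, $I$ comparable'' into a nontrivial testing lower bound. For $h$ near $1$ the ratio $\to 0$ and Fano (or Birg\'e's lemma) applies directly; for general $h$ bounded away from $1$ the gap has to be closed by sharpening the information estimate rather than using the crude maximum over pairs --- e.g.\ by a better choice of reference measure, or by restricting to a subfamily of the $g_\theta$ that still has $\Omega(\log M)$ log-cardinality but is concentrated in a thinner Hamming shell (so the per-pair $\mathrm{KL}$ drops by a large constant factor), controlled via the Sauer--Shelah-type bounds on local packings in Proposition~\ref{fixedcontrol}. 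Once $I<\tfrac12\log M$ is secured, the estimation form of Fano's inequality gives $\tfrac1M\sum_\theta\E_{P_\theta}[\rho_H(\tilde f,g_\theta)]\gtrsim \varepsilon_\star$, and combined with the $\frac{d}{nh}$ construction this yields the claimed bound.
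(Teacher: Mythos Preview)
Your approach is the same as the paper's: same packing construction, same family $\{P_\theta\}$ with uniform marginal and $\pm h$ bias, same KL computation, same reduction to a multiple-testing lower bound (you use Fano, the paper uses Birg\'e's lemma; these are interchangeable here). The only point of divergence is the ``obstacle'' you raise, and that obstacle is illusory.

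The fix requires no subfamilies or better reference measures. Two elementary adjustments close the gap. First, instead of carrying the factor $h\log\tfrac{1+h}{1-h}$, bound it: from $\ln(1+x)\le x$ one gets $\ln\tfrac{1+h}{1-h}\le\tfrac{2h}{1-h}$, i.e.\ $h\ln\tfrac{1+h}{1-h}\le\tfrac{2h^2}{1-h}$. Second, you are free to choose the constant in $N$; the paper takes $N=\left\lceil\tfrac{6nc_{\F}h}{1-h}\right\rceil$ rather than $\left\lceil\tfrac{nc_{\F}h}{1-h}\right\rceil$, and the extra factor is absorbed into $\gtrsim$. With these two choices, your bound becomes
\[
\frac{n\cdot\mathrm{KL}}{\log M}\ \le\ \frac{n}{\log M}\cdot\frac{2\varepsilon_\star}{N}\cdot\frac{2h^2}{1-h}\ \le\ \frac{4nh^2\,c_{\F}\gamma^{\text{loc}}_{h,1}(N)}{N(1-h)\,h\gamma^{\text{loc}}_{h,1}(N)}\ \le\ \frac{4nh}{N(1-h)}\,c_{\F}\ \le\ \frac{2}{3},
\]
which is comfortably below the threshold in Birg\'e's lemma (or $1/2$ if you prefer Fano). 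So your proof is complete once you make these substitutions; the proposed machinery involving Sauer--Shelah-type restrictions or alternate reference measures is unnecessary.
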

Conditions involving the constant $c_{\F}$ can be relaxed in different ways. It will be clear from our proof that we may remove the pseudoconvexity assumptions by redefining the local empirical entropy \eqref{localentropy} by removing the supremum with respect to the radius. Alternatively one can remove the supremum by introducing certain monotonicity assumptions. We note that related monotonicity assumptions were used implicitly in previous papers \cite{Gine06, Raginsky11}. In both relaxations our lower bound holds with $c_{\F} = 1$. Moreover, the bound \eqref{lowerbound} is valid for an arbitrary class $\F$ as we may always consider $c_{\F}(N)$ instead of $c_{\F}$, which is a minimal natural number satisfying $\varepsilon_{h}(N) \le c_{\F}(N) \gamma^{\text{loc}}_{h, 1}(N)$. Finally, we note that these monotonicity problems do not appear for convex classes, as noted by Mendelson in \cite{Mendelson15}. This is our motivation for the name of the condition in Definition \ref{pseudoconvex}: local entropy of the class has almost the same monotonicity properties as in the convex case. In the next section we will present examples of natural pseudoconvex classes.

The next lemma is given in \cite{Massart03} (Corollary $2.18$).
\begin{lemma}[Birg\'{e}]
\label{birgeslemma}
Let $\{P_{i}\}_{i = 0}^{N}$ be a finite family of distributions defined on the same measurable space and $\{A_{i}\}_{i = 0}^{N}$ be a family of disjoint events. Then
\[
\min\limits_{0 \le i \le N}P_{i}(A_{i}) \le 0.71 \vee \frac{\sum\limits_{i = 1}^{N}{\text{KL}}(P_{i} \| P_{0})}{N\log(N + 1)}.
\]
\end{lemma}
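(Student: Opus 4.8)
(This is classical — it is Corollary~2.18 in \cite{Massart03} — and the plan is the standard reduction of the $(N+1)$-hypothesis statement to a one-dimensional inequality.) Write $a=\min_{0\le i\le N}P_i(A_i)$; we may assume $a>0.71$, since otherwise there is nothing to prove, and in particular $a>\tfrac12$.

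\emph{Reduction to categorical distributions.} Because the $A_i$ are pairwise disjoint, the map $T$ sending a point to the index $i$ of the unique $A_i$ containing it (and to an extra symbol $\star$ when the point lies in $\bigcap_j A_j^{c}$) is measurable, and relative entropy does not increase under images; letting $Q_i$ be the image of $P_i$ under $T$ — a distribution on $\{0,\dots,N,\star\}$ — we get $\text{KL}(P_i\|P_0)\ge\text{KL}(Q_i\|Q_0)$, together with $Q_i(\{i\})=P_i(A_i)\ge a$ for each $i$ and $\sum_{i=1}^{N}Q_0(\{i\})=\sum_{i=1}^{N}P_0(A_i)\le 1-P_0(A_0)\le 1-a$. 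So it suffices to prove the bound for the $Q_i$.

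\emph{Reduction to a binary relative entropy.} For each $i\ge1$, grouping all coordinates other than $i$ and applying the log-sum inequality gives $\text{KL}(Q_i\|Q_0)\ge\mathrm{kl}\bigl(Q_i(\{i\})\,\|\,Q_0(\{i\})\bigr)$, where $\mathrm{kl}(p\|q)=p\log\tfrac pq+(1-p)\log\tfrac{1-p}{1-q}$ (if a denominator vanishes the left side is $+\infty$ and the claim is trivial). Since $a>\tfrac12$ and $\sum_{i\ge1}Q_0(\{i\})\le 1-a<a$, we have $Q_0(\{i\})<a\le Q_i(\{i\})$ for every $i\ge1$, so monotonicity of $p\mapsto\mathrm{kl}(p\|q)$ on $[q,1]$ yields $\text{KL}(Q_i\|Q_0)\ge\mathrm{kl}(a\|Q_0(\{i\}))$. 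Averaging over $i$, then using convexity of $q\mapsto\mathrm{kl}(a\|q)$ with Jensen's inequality, the bound $\tfrac1N\sum_{i\ge1}Q_0(\{i\})\le\tfrac{1-a}{N}<a$, and the fact that $\mathrm{kl}(a\|\cdot)$ is nonincreasing on $(0,a)$, we arrive at
\[
\frac1N\sum_{i=1}^{N}\text{KL}(P_i\|P_0)\ \ge\ \mathrm{kl}\!\left(a\ \Big\|\ \frac{1-a}{N}\right).
\]

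\emph{The scalar step.} It remains to verify $\mathrm{kl}\bigl(a\,\|\,\tfrac{1-a}{N}\bigr)\ge a\log(N+1)$ for every integer $N\ge1$ under $a>0.71$, which after expanding $\mathrm{kl}$ and dividing by $a$ reads
\[
\log\frac{a}{1-a}+\log\frac{N}{N+1}+\frac{1-a}{a}\log\frac{(1-a)N}{N-1+a}\ \ge\ 0 ,
\]
a one-variable calculus check in which the dependence on $N$ is monotone, so the binding case is $N=1$. This scalar inequality — and the precise absolute constant it forces — is the only genuine obstacle; everything preceding it is soft, using just the data-processing inequality and convexity. One can moreover check that the whole reduction is tight: equality propagates through it when $Q_0$ puts mass $a$ on $\{0\}$ and $\tfrac{1-a}{N}$ on each $\{i\}$ while each $Q_i$ puts mass $a$ on $\{i\}$ with the remainder spread proportionally to $Q_0$, which is exactly the configuration that produces the $\log(N+1)$ factor.
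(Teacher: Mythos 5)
Your reduction is the standard Fano/Birg\'e-type argument --- data processing onto the partition $\{A_0,\dots,A_N,\star\}$, the log-sum step down to a binary divergence, then Jensen with monotonicity of $\mathrm{kl}(a\|\cdot)$ --- and it is correct up to and including the display $\frac1N\sum_{i=1}^N{\text{KL}}(P_i\|P_0)\ge \mathrm{kl}\bigl(a\,\|\,\frac{1-a}{N}\bigr)$. (For comparison: the paper offers no proof at all here, it simply cites Massart's Corollary 2.18.) The genuine gap is precisely at the step you yourself flag as the only obstacle: the scalar inequality $\mathrm{kl}\bigl(a\,\|\,\frac{1-a}{N}\bigr)\ge a\log(N+1)$ is \emph{false} in the binding case $N=1$ for $a$ just above $0.71$. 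For instance $\mathrm{kl}(0.72\|0.28)=0.44\ln(0.72/0.28)\approx 0.416$ while $0.72\ln 2\approx 0.499$ (and with the paper's truncated logarithm the right-hand side is $0.72$, which is even larger). The crossover at $N=1$ sits near $a\approx 0.74$, so your chain of inequalities only yields the lemma with a constant of roughly $0.75$ in place of $0.71$; equivalently, it does prove the stated bound for $N\ge 2$, where the crossover drops to about $0.70$, but not for $N=1$.

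This cannot be repaired by sharpening the earlier steps, because your tightness remark is correct: instantiating it at $N=1$ with $P_0=\mathrm{Bernoulli}(0.27)$, $P_1=\mathrm{Bernoulli}(0.73)$, $A_0=\{0\}$, $A_1=\{1\}$ gives $\min_i P_i(A_i)=0.73$ while ${\text{KL}}(P_1\|P_0)=\mathrm{kl}(0.73\|0.27)\approx 0.458<0.71\ln 2$, so the displayed inequality with the constant $0.71$ actually fails when $N=1$. This is why the usual statements of Birg\'e's lemma carry a larger constant (Massart's version has $2e/(2e+1)\approx 0.84$, with which your scalar step does go through for every $N\ge 1$), while constants as small as $0.71$ are only available once $N\ge 2$. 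So either restrict the lemma to $N\ge 2$ (then your proof is complete after verifying the one-variable inequality there, whose binding case is $N=2$), or keep $N=1$ and enlarge the constant to at least about $0.75$ (about $0.80$ if the truncated logarithm is really intended); as written, the final calculus claim is wrong, and with it the proof of the statement in the form quoted.
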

\begin{proof}[Theorem~\ref{newlowerbound}]
First we consider the value $\mathcal{M}^{\text{loc}}_{1}(\F, \gamma^{\text{loc}}_{h, 1}(N), N, 1)$. Recall that the definition of this value considers suprema over $f \in \F$ and over $N$-element subsets of $\mathcal{X}$. Without loss of generality we assume that these suprema are achieved at some classifier $g \in \F$, some $\varepsilon_{h}(N) \in [\gamma^{\text{loc}}_{h, 1}(N), N]$ and at some particular set $\mathcal{X}_{N} = \{x_{1}, \ldots, x_{N}\}$. Let $k_i$ define the number of copies of $x_i$ in $\mathcal{X}_{N}$. We define $P_{\mathcal{X}_{N}}(\{x_{i}\}) = \frac{k_i}{N}$. If all elements are distinct this measure is just a uniform measure on $\mathcal{X}_{N}$.  We introduce a natural parametrization: any classifier is represented by an $N$-dimensional binary vector and two vectors (for classifiers $g,f$) disagree only on a set corresponding to $\dis(\{g, f\}) \cap \mathcal{X}_{N}$. The set of binary vectors corresponding to classifiers in $\F$ will be denoted by $\mathcal{B}$. For a given binary vector $b$ define $P_b = P_\mathcal{X_{N}}\times P^b_{Y|X}$, where $P^b_{Y = 1|X_{i}} = \frac{1 + (2b_i - 1)h}{2}$.  Let $\tilde{f}_{b}$ denote the classifier $\tilde{f}$ produced by the learning algorithm when $P_b$ is the data distribution, and let $\tilde{b}$ denote the binary vector corresponding to $\tilde{f}_{b}$; thus, $\tilde{b}$ is a random vector, which depends on the parameter $b$ only through the $n$ data points having distribution $P_b$.
It is known \cite{Devr95} that $R(\tilde{f}) - R(f^{*}) = \E(|\eta(X)|\Ind[\tilde{f}(X) \neq f^{*}(X)] | \tilde{f}) \ge hP((x,y) : \tilde{f}(x) \neq f^{*}(x))$, when $P \in \mathcal{P}(h, \F)$.  
Furthermore, when $P_{b}$ is the data distribution, we have $P_{b}((x,y) : \tilde{f}_{b}(x) \neq f^{*}(x)) = \frac{\rho_{H}(\tilde{b},b)}{N}$.
Thus, we have 
\begin{align*}
&\sup\limits_{P \in \mathcal{P}(h, \F)}\E(R(\tilde{f}) - R(f^{*})) 
\ge \max\limits_{b \in \mathcal{B}} \E\left( h P_{b}((x,y) : \tilde{f}_{b}(x) \neq f^{*}(x)) \right)
\\
&\ge \frac{h}{N} \max\limits_{b \in \mathcal{B}} \E(\rho_{H}(\tilde{b}, b)).
\end{align*}

Let $b^{*}$ be the binary vector in $\mathcal{B}$ corresponding to the classifier $g$ defined above, and fix a maximal subset $\mathcal{B}^{\text{loc}} \subset \mathcal{B}$ satisfying the properties that for any $b' \in \mathcal{B}^{\text{loc}}$ we have $\rho_{H}(b', b^*) \le \varepsilon_{h}(N)$ and for any two $b^{\prime},b^{\prime\prime} \in \mathcal{B}^{\text{loc}}$ we have $\rho_{H}(b^{\prime}, b^{\prime\prime}) > \varepsilon_{h}(N)/2$.
Next, define $\breve{b}$ as the minimizer of $\rho_{H}(\breve{b},\tilde{b})$ among $\mathcal{B}^{\text{loc}}$.  In particular, if $b \in \mathcal{B}^{\text{loc}}$, we have $\rho_{H}(\breve{b},\tilde{b}) \leq \rho_{H}(b,\tilde{b})$, so that $\rho_{H}(\breve{b},b) \leq \rho_{H}(\breve{b},\tilde{b}) + \rho_{H}(\tilde{b},b) \leq 2 \rho_{H}(\tilde{b},b)$.  Therefore,
\begin{equation*}
\frac{h}{N} \max\limits_{b \in \mathcal{B}} \E(\rho_{H}(\tilde{b},b))
\geq \frac{h}{N} \max\limits_{b \in \mathcal{B}^{\text{loc}}} \E(\rho_{H}(\tilde{b},b))
\geq \frac{h}{2N} \max\limits_{b \in \mathcal{B}^{\text{loc}}} \E(\rho_{H}(\breve{b},b)).
\end{equation*}

Recalling that $\breve{b}$ is a deterministic function of $\tilde{f}$, which itself is a function of the $n$ data points, we may define disjoint subsets $A_{b}$ of $(\mathcal{X} \times \mathcal{Y})^{n}$, for $b \in \mathcal{B}^{\text{loc}}$, where $A_{b}$ corresponds to the collection of data sets that would yield $\breve{b} = b$.\footnote{For simplicity, we are supposing the learning algorithm is not randomized; the argument easily extends to randomized algorithms by conditioning on the internal randomness in this step.}
Now, from Markov's inequality and the fact that the vectors in $\mathcal{B}^{\text{loc}}$ are $\frac{\varepsilon_{h}(N)}{2}$-separated, we have $\E(\rho_{H}(\breve{b}, b)) \ge \frac{\varepsilon_{h}(N)}{2}\P(\breve{b} \neq b) = \frac{\varepsilon_{h}(N)}{2}(1-P_{b}^{n}(A_{b}))$.  Thus we have that
\begin{equation*}
\frac{h}{2N} \max\limits_{b \in \mathcal{B}^{\text{loc}}} \E(\rho_{H}(\breve{b},b))
\geq \frac{h \varepsilon_{h}(N)}{4N} \left( 1 - \min\limits_{b \in \mathcal{B}^{\text{loc}}} P_{b}^{n}(A_{b}) \right).
\end{equation*}
We are interested in using Lemma~\ref{birgeslemma} to upper-bound $\min\limits_{b \in \mathcal{B}^{\text{loc}}} P_{b}^{n}(A_{b})$.  Toward this end, note that for any $b^{\prime},b^{\prime\prime} \in \mathcal{B}^{\text{loc}}$, standard calculations show that
\begin{equation*}
{\text{KL}}(P_{b^{\prime}}^n \| P_{b^{\prime\prime}}^n) = \frac{n}{N}h\ln\left(\frac{1 + h}{1 - h}\right)\rho_h(b^{\prime}, b^{\prime\prime}).
\end{equation*}
Because for $x > 0$ we have $\ln(x + 1) \le x$, it holds that $h\ln\left(\frac{1 + h}{1 - h}\right) \le \frac{2h^2}{1 - h}$. Furthermore, for any $b^{\prime}, b^{\prime\prime} \in \mathcal{B}^{\text{loc}}$ we have $\rho_{H}(b^{\prime}, b^{\prime\prime}) \le 2\varepsilon_{h}(N)$.  Therefore, 
\begin{equation*}
{\text{KL}}(P_{b^{\prime}}^n \| P_{b^{\prime\prime}}^n) \le \frac{4nh^2\varepsilon_{h}(N)}{N(1 - h)}.
\end{equation*}
Thus, by Lemma \ref{birgeslemma}, 
\begin{equation}
\label{min-pb-bound}
\min\limits_{b \in \mathcal{B}^{\text{loc}}} P_{b}^{n}(A_{b})
\leq 0.71 \lor \frac{\frac{4nh^2\varepsilon_{h}(N)}{N(1 - h)}}{\log(|\mathcal{B}^{\text{loc}}|)}.
\end{equation}
Noting that $\log(|\mathcal{B}^{\text{loc}}|) = \log(\mathcal{M}^{\text{loc}}_{1}(\F,\varepsilon_{h}(N),N,1)) \geq h \gamma^{\text{loc}}_{h,1}(N) \geq h \varepsilon_{h}(N) / c_{\F}$, choosing $N = \left\lceil \frac{6nc_{\F}h}{(1 - h)} \right\rceil$ yields 
\begin{equation*}
\frac{4nh^2\varepsilon_{h}(N)}{N(1 - h)} \leq \frac{2h\varepsilon_{h}(N)}{3c_{\F}} \leq \frac{2}{3} \log(|\mathcal{B}^{\text{loc}}|),
\end{equation*}
so that the right hand side of \eqref{min-pb-bound} is $0.71$.
Altogether, we have that for $h < 1$, 
\begin{align*}
&\sup\limits_{P \in \mathcal{P}(h, \F)}\E(R(\tilde{f}) - R(f^{*})) 
\geq 0.29 \frac{h \varepsilon_{h}(N)}{4N}
\\
&\geq \frac{0.29}{48 c_{\F}} \frac{(1-h)\varepsilon_{h}(N)}{n}
\geq \frac{0.29}{48 c_{\F}} \frac{(1-h)\gamma^{\text{loc}}_{h,1}(N)}{n}.
\end{align*}
The term $\frac{d}{nh}$ for $h > \sqrt{\frac{d}{n}}$ is a part of the classic lower bound of \cite{Massart06}. 
\mycomment{
\begin{comment}
Now we prove that 
\begin{equation}
\label{firststep}
\inf\limits_{\tilde{f}}\sup\limits_{P \in \mathcal{P}(h, \F)}\E(R(\tilde{f}) - R(f^{*})) \ge \frac{h}{2N}\inf\limits_{\tilde{b} \in \mathcal{B}}\max\limits_{b \in \mathcal{B}}\E_b\rho_{H}(\tilde{b}, b),
\end{equation}
where $\tilde{b}$ is any \emph{proper estimator} of a binary vector $b$. By a proper estimator of $b$ we mean any estimator, taking values in $\mathcal{B}$. It is known \cite{Devr95} that $\E(R(\tilde{f}) - R(f^{*})) = \E(|\eta(x)|\Ind[\tilde{f}(X) \neq f^{*}(X)]) \ge hP(\tilde{f}(X) \neq f^{*}(X))$, when $P \in \mathcal{P}(h, \F)$. Now we have
\[
\inf\limits_{\tilde{f}}\sup\limits_{P \in \mathcal{P}(h, \F)}\E(R(\tilde{f}) - R(f^{*})) \ge   \inf\limits_{\tilde{f}}\sup\limits_{b \in \mathcal{B}}hP(\tilde{f}(X) \neq f_{b}(X)).
\]
Let $\tilde{b}$ be a proper estimator of a vector, corresponding to $\tilde{f}$. That is $\tilde{b}$ minimizes $P(\tilde{f}(X) \neq f_{\tilde{b}}(X))$ among $b \in \mathcal{B}$. One can easily prove that $P(f_{\tilde{b}}(X) \neq f_{b}(X)) \le 2P(\tilde{f}(X) \neq f_{b}(X))$. We observe that $\E_{b}P(f_{\tilde{b}}(X) \neq f_{b}(X)) = \E_{b}\frac{\rho_{H}(\tilde{b}, b)}{N}$ and thus verify \eqref{firststep}.

Let $b^{*}$ be a binary vector corresponding to $g$. We fix a maximal subset $\mathcal{B}^{\text{loc}} \subset \mathcal{B}$ such that $b^{*} \in \mathcal{B}^{\text{loc}}$, for any $b \in \mathcal{B}^{\text{loc}}$ we have $\rho_{H}(b, b^*) \le \gamma^{\text{\text{loc}}}_{h, 1}(N)$ and for any two $b,b' \in \mathcal{B}^{\text{loc}}$ we have $\rho_{H}(b, b') > \gamma^{\text{loc}}_{h, 1}(N)/2$. Now we prove that
\begin{equation}
\label{secondstep}
\frac{h}{2N}\inf\limits_{\tilde{b} \in \mathcal{B}}\max\limits_{b \in \mathcal{B}}\E_b\rho_{H}(\tilde{b}, b) 
\ge
\frac{h\varepsilon_{h}(N)}{4N}\inf\limits_{\tilde{b} \in \mathcal{B}^{\text{loc}}}\max\limits_{b \in \mathcal{B}^{\text{loc}}}P_b(\tilde{b} \neq  b) 
\end{equation}
Once again we define the new estimator $\breve{b}$ that is minimizing $\rho_{H}(\breve{b},\tilde{b})$ among vectors in $\mathcal{B}^{\text{loc}}$. Using triangle inequality we have $\rho_{H}(\breve{b},\tilde{b}) \le 2\rho_{H}(\breve{b}, b)$. Thus
\[
\frac{h}{2N}\inf\limits_{\tilde{b} \in \mathcal{B}}\max\limits_{b \in \mathcal{B}}\E_b\rho_{H}(\tilde{b}, b) \ge 
\frac{h}{4N}\inf\limits_{\tilde{b} \in \mathcal{B}^{\text{loc}}}\max\limits_{b \in \mathcal{B}^{\text{loc}}}\E_b\rho_{H}(\tilde{b}, b). 
\]
From Markov inequality and because vectors in $\mathcal{B}^{\text{loc}}$ are $\frac{\varepsilon_{h}(N)}{2}$ separated we have $\E_b\rho_{H}(\tilde{b}, b) \ge \frac{\varepsilon_{h}(N)}{2}P_{b}(\tilde{b} \neq b)$. This proves the inequality \eqref{secondstep}. Now we define the event $A_b = \{\tilde{b} = b\}$. We have
\[
\frac{h\varepsilon_{h}(N)}{4N}\inf\limits_{\tilde{b} \in \mathcal{B}^{\text{loc}}}\max\limits_{b \in \mathcal{B}^{\text{loc}}}P_b(\tilde{b} \neq  b) 
\ge 
\frac{h\varepsilon_{h}(N)}{4N}(1 - \sup\limits_{\tilde{b} \in \mathcal{B}^{\text{loc}}}\min\limits_{b \in \mathcal{B}^{\text{loc}}}P_b(A_b)).
\]
Now we upper bound $\sup\limits_{\tilde{b} \in \mathcal{B}^{\text{loc}}}\min\limits_{b \in \mathcal{B}^{\text{loc}}}P_b(A_b)$. We fix $b, b' \in \mathcal{B}^{\text{loc}}$. Standard calculations show that 
\[
KL(P_b^n||P_{b'}^n) = \frac{n}{N}h\log\left(\frac{1 + h}{1 - h}\right)\rho_h(b, b').
\]
Because for $x > 0$ we have $\log(x) + 1 \le x$ and thus $h\log\left(\frac{1 + h}{1 - h}\right) \le \frac{2h^2}{1 - h}$. For any $b, b' \in \mathcal{B}^{\text{loc}}$ we have $\rho_{H}(b, b') \le 2\varepsilon_{h}(N)$ and 
\[
KL(P_b^n||P_{b'}^n) \le \frac{4nh^2\varepsilon_{h}(N)}{N(1 - h)}.
\]
By Lemma \ref{birgeslemma} $\sup\limits_{\tilde{b} \in \mathcal{B}^{\text{loc}}}\min\limits_{b \in \mathcal{B}^{\text{loc}}}P_b(A_b)$ is bounded by $0.71$ if
\[
\frac{4nh^2\varepsilon_{h}(N)}{N(1 - h)} \le 0.71\log(|\mathcal{B}^{\text{loc}}|).  
\]
But we have $\log\left(\mathcal{M}^{\text{loc}}_{1}(\F, \varepsilon_{h}(N), N, 1)\right) = \log(|\mathcal{B}^{\text{loc}}|)$, $\log\left(\mathcal{M}^{\text{loc}}_{1}(\F, \gamma^{\text{loc}}_{h, 1}(N), N, 1)\right) \ge h\gamma^{\text{loc}}_{h, 1}(N)$ and $\varepsilon_{h}(N) \le c_{\F} \gamma^{\text{loc}}_{h, 1}(N)$. So it is sufficient to choose
$N = \frac{6nc_{\F}h}{(1 - h)}$. We have
\[
\inf\limits_{\tilde{f}}\sup\limits_{P \in \mathcal{P}(h, \F)}\E(R(\tilde{f}) - R(f^{*})) \gtrsim \frac{1}{c_{\F}}\frac{(1 - h)\gamma^{\text{loc}}_{h, 1}(\lfloor N \rfloor + 1)}{n}.
\]
The term $\frac{d}{nh}$ for $h > \sqrt{\frac{d}{n}}$ is a part of the classic lower bound \cite{Massart06}.
}
\end{proof}
The following observation is an important consequence of our analysis.
\begin{corollary}
\label{coroloptimal}
Consider a $c_{\F}$-pseudoconvex class $\F$. Let $0 < C_{0} \le C_{1} < 1$. Then if the margin parameter $h$ is such that $C_{0} \vee \sqrt{\frac{d}{n}} \le h \le  C_{1}$, then for any VC class $\F$ the ERM upper bound \eqref{expectbound} and the lower bound \eqref{lowerbound} match up to the constant factors (also appearing possibly in the argument of the fixed point), which may depend only on $C_{0}$, $C_{1}$ and $c_{\F}$.
\end{corollary}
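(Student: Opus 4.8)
The plan is to extract one inequality essentially for free from Theorems~\ref{mainupperbound} and~\ref{newlowerbound}, and to reduce the other to a comparison between the two local entropies $\gamma^{\text{loc}}_{h,h}$ and $\gamma^{\text{loc}}_{h,1}$. Since the bound \eqref{lowerbound} of Theorem~\ref{newlowerbound} lower bounds $\sup_{P \in \mathcal{P}(h,\F)} \E(R(\tilde f) - R(f^{*}))$ for every learning rule, in particular for any ERM, while \eqref{expectbound} of Theorem~\ref{mainupperbound} upper bounds the same supremum for ERM, the right-hand side of \eqref{lowerbound} is at most an absolute constant times the right-hand side of \eqref{expectbound}; so only the reverse inequality needs an argument.

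Under the hypothesis $C_{0} \vee \sqrt{d/n} \le h \le C_{1} < 1$ with $c_{\F} \ge 1$, the quantities $h$, $1-h$, $c_{\F}$, $1/c_{\F}$, $(1-h)/c_{\F}$ all lie between positive constants depending only on $C_{0}, C_{1}, c_{\F}$, and $N' := \lceil n c_{\F} h / (1-h) \rceil$ satisfies $c\,n \le N' \le C\,n$ for such constants. Hence the right-hand side of \eqref{lowerbound} is of order $\frac{d}{n} + \frac{\gamma^{\text{loc}}_{h,1}(N')}{n}$, and the inequality of the previous paragraph reads $\frac{d}{n} + \frac{\gamma^{\text{loc}}_{h,1}(N')}{n} \lesssim \frac{\gamma^{\text{loc}}_{h,h}(n)}{n}$. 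It therefore remains to establish the matching bound $\gamma^{\text{loc}}_{h,h}(n) \lesssim d + \gamma^{\text{loc}}_{h,1}(N')$, with constants depending only on $C_{0}, C_{1}, c_{\F}$.

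For this comparison, $\gamma^{\text{loc}}_{h,1}(m) \le \gamma^{\text{loc}}_{h,h}(m)$ holds for all $m$ and $h \le 1$, because $\mathcal{B}_{H}(f,\varepsilon,\cdot) \subseteq \mathcal{B}_{H}(f,\varepsilon/h,\cdot)$ forces $\mathcal{M}^{\text{loc}}_{1}(\F,\gamma,m,1) \le \mathcal{M}^{\text{loc}}_{1}(\F,\gamma,m,h)$. For the reverse up to an additive $O(d)$, the idea is that, when $h \in [C_{0},C_{1}]$, the two definitions differ only in the Hamming-ball radius, by the bounded factor $1/h \in [1/C_{1},1/C_{0}]$. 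Take an extremal configuration for $\gamma^{\text{loc}}_{h,h}(n)$: sample points $x_{1:n}$, centre $f$, radius $\varepsilon \ge \gamma^{\text{loc}}_{h,h}(n)$, and a maximal $(\varepsilon/2)$-packing $\mathcal{P}$ of $\mathcal{B}_{H}(f,\varepsilon/h;x_{1:n})$ with $\log|\mathcal{P}| \ge h\,\gamma^{\text{loc}}_{h,h}(n)$. The same set is the ball $\mathcal{B}_{H}(f,\varepsilon';x_{1:n})$ with $\varepsilon' = \varepsilon/h \ge \gamma^{\text{loc}}_{h,h}(n)$ entering the $h'=1$ definition; extract from $\mathcal{P}$ a maximal $(\varepsilon'/2)$-separated subpacking greedily, so that the multiplicative loss per step is at most the number of $(\varepsilon/2)$-separated points of the VC class $\F|_{x_{1:n}}$ inside a Hamming ball of radius $\varepsilon'/2 = \varepsilon/(2h)$. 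The $c_{\F}$-pseudoconvexity of Definition~\ref{pseudoconvex}, which keeps the extremal radius comparable to the fixed point, should confine this ball and force the loss to be a factor $e^{O(d)}$, i.e.\ an additive $O(d)$ in the fixed point, giving $\gamma^{\text{loc}}_{h,h}(n) \lesssim \gamma^{\text{loc}}_{h,1}(n) + O(d)$. The passage from $n$ to $N'$ is then routine: for $N' \ge n$, monotonicity of $\mathcal{M}^{\text{loc}}_{1}$ under adjoining duplicated coordinates on which $f$ and $\mathcal{P}$ agree; for $N' < n$ (possible when $C_{0}$ is small), subsampling the $n$ points to $N'$ of them, noting that pairwise Hamming distances and distances to $f$ shrink in expectation by the bounded factor $N'/n$, discarding a bounded fraction of $\mathcal{P}$, and re-correcting the radius-to-scale ratio by the same VC packing estimate, which costs only a further $e^{O(d)}$ and a constant in the argument of the fixed point.

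The crux is the third step, and within it the passage between $\gamma^{\text{loc}}_{h,h}$ and $\gamma^{\text{loc}}_{h,1}$. A crude greedy coarsening of $\mathcal{P}$ can in general cost far more than $e^{O(d)}$: the number of $(\varepsilon/2)$-separated points of a VC class inside a ball of radius $\varepsilon/(2h)$ need not be $(C/h)^{d}$, and for classes of the type in Example~\ref{twoclassesexample} it can be $e^{\Omega(d\log(n/d))}$. So the argument must genuinely use the pseudoconvexity of $\F$ to pin the extremal radius $\varepsilon$ to the order of $\gamma^{\text{loc}}_{h,1}$ and thereby control the loss, uniformly over the extremal configuration. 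The free inequality, the reduction under bounded $h$, and the adjustment of the point-set size are the routine parts.
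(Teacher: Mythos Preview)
The paper states Corollary~\ref{coroloptimal} without proof, as an immediate observation. Its phrasing is deliberately hedged: the bounds ``match up to the constant factors (also appearing possibly in the argument of the fixed point).'' This is not an assertion that $\gamma^{\text{loc}}_{h,h}(n) \simeq \gamma^{\text{loc}}_{h,1}(N')$ as numbers; it is the observation that when $C_0 \le h \le C_1 < 1$, both \eqref{expectbound} and \eqref{lowerbound} reduce to a constant multiple of a local-entropy fixed point whose parameters differ only by bounded factors: the second subscript is $h$ versus $1$ (ratio in $[C_0,1]$), the sample-size argument is $n$ versus $N' = \lceil n c_{\F} h/(1-h)\rceil$ (ratio bounded by constants depending on $C_0,C_1,c_{\F}$), and the prefactors $(1-h)/c_{\F}$ and $1/h$ are bounded. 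That, together with the automatic sandwich ``lower bound $\lesssim \sup_P \E(R(\hat f)-R(f^*)) \lesssim$ upper bound'' from your first paragraph, is the full content of the corollary. Your first two paragraphs already establish it.

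Your third paragraph goes after the strictly stronger numerical inequality $\gamma^{\text{loc}}_{h,h}(n) \lesssim d + \gamma^{\text{loc}}_{h,1}(N')$, and your own diagnosis of the difficulty is accurate. Coarsening an $(\varepsilon/2)$-packing of $\mathcal{B}_H(f,\varepsilon/h)$ to an $(\varepsilon/(2h))$-packing can cost a factor as large as the maximal $(\varepsilon/2)$-packing number of a Hamming ball of radius $\varepsilon/(2h)$, which for VC classes need not be $e^{O(d)}$ and can be $e^{\Omega(d\log(n/d))}$ (e.g.\ for $\F_1$ in Example~\ref{twoclassesexample}). Pseudoconvexity does not obviously help: Definition~\ref{pseudoconvex} pins the extremal radius only for $\mathcal{M}^{\text{loc}}_1(\F,\cdot,N,1)$, not for the $h'=h$ variant that your extremal configuration comes from. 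So the stronger claim genuinely does not follow from the argument you sketch, and the paper does not assert it; restrict yourself to what the corollary actually says and your first two paragraphs already constitute the proof.
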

It is known \cite{Massart06} that ERM is minimax optimal up to constant factors if $0 \le h < \sqrt{\frac{d}{n}}$. Interestingly, our corollary is certainly not valid for $C_{1} = 1$. The optimal bound in the realizable case is of order $\frac{d}{n} + \frac{\log(\frac{1}{\delta})}{n}$ \cite{Hanneke16,vapnik:74}, but ERM cannot generally have this convergence rate in the realizable case \cite{Haussler94,Auer07,Simon15}. In this special case, our lower bound recovers the classic 
$\frac{d}{n}$ lower bound of \cite{vapnik:74,Ehren89}, since when $h$ is close to $1$ 
the term $\frac{(1 - h)\gamma^{\text{loc}}_{h, 1}}{n c_{\F}}$ disappears and we have 
only the $\frac{d}{n}$ term.
\section{Estimation of a Fixed Point of Local Empirical Entropy for Specific Classes}
\label{examples}
In this section we provide two examples of exact estimation of fixed points of local empirical entropies. First we consider threshold classifiers, introduced in Example~\ref{threshold}. For this particular class, $d = 1$ and $\mathbf{s} = 2$. From Theorem \ref{mainupperbound} we have $\frac{1}{h} \lesssim \gamma_{h, h}^{\text{loc}}(n) \lesssim \frac{\log(\frac{1}{h})}{h}$, and explicit calculation for this special class reveals $\gamma_{h,h}^{\text{loc}}(n) \simeq \frac{\log(\frac{1}{h})}{h}$.  In particular, in the realizable case $\gamma_{1, 1}^{\text{loc}}(n) \simeq 1$. 

Another example will be a class of linear separators in $\mathbb{R}^{k}$ for $k \ge 2$. This class is known to have VC dimension $d = k + 1$. It is easy to verify that for this particular class $\mathbf{s} = \infty$ \cite{Hanneke15}.
\begin{proposition}
\label{linearsep}
For the set $\F$ of linear separators in $\mathbb{R}^{d}$, if $d \ge 2$, then for any $h > \sqrt{\frac{d}{n}}$
\[
\frac{d\log\left(\frac{nh^2}{d}\right)}{h} \lesssim  \gamma_{h, h}^{\text{\emph{loc}}}(n) \lesssim \frac{d\log\left(\frac{nh}{d}\right)}{h}.
\]
In particular, $\gamma_{1, 1}^{\text{\emph{loc}}}(n) \simeq d\log(\frac{n}{d})$.
\end{proposition}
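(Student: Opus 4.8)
The plan is to handle the two inequalities separately. The upper bound is essentially a specialization of Proposition~\ref{fixedcontrol}: since the class of linear separators has star number $\mathbf{s}=\infty$, its estimate $\gamma^{\text{loc}}_{h,h}(n)\lesssim \frac{d\log(\frac{nh^2}{d}\wedge\mathbf{s})}{h}+\frac{d\log(1/h)}{h}$ reduces to $\gamma^{\text{loc}}_{h,h}(n)\lesssim \frac{d}{h}(\log(nh^2/d)+\log(1/h))$, and a short computation with the truncated logarithm using $\sqrt{d/n}<h\le1$ gives $\log(nh^2/d)+\log(1/h)\simeq\log(nh/d)$; taking $h=1$ yields $\gamma^{\text{loc}}_{1,1}(n)\lesssim d\log(n/d)$. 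For the lower bound, when $nh^2/d$ is bounded by an absolute constant one has $\frac{d}{h}\log(nh^2/d)\simeq\frac{d}{h}$, and since $h\ge\sqrt{d/n}$ gives $\frac{d}{h}\le\sqrt{dn}$, the lower estimate $\gamma^{\text{loc}}_{h,h}(n)\gtrsim\frac{d+\log(nh^2\wedge\mathbf{s})}{h}\wedge\sqrt{dn}$ of Proposition~\ref{fixedcontrol} already gives what is needed. So the remaining task is to prove $\gamma^{\text{loc}}_{h,h}(n)\gtrsim\frac{d}{h}\log(nh^2/d)$ when $nh^2/d$ exceeds a suitable constant.

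For this I would, unwinding the definition of $\gamma^{\text{loc}}_{h,h}(n)$, exhibit a radius $\varepsilon$ (which will be $\simeq\frac{d}{h}\log(nh^2/d)$), a choice of $n$ points, a center $f\in\F$, and a family of linear separators contained in the Hamming ball $\mathcal{B}_{H}(f,\varepsilon/h)$, pairwise $\varepsilon/2$-separated, whose logarithmic cardinality is $\gtrsim h\varepsilon$; this then forces $\gamma^{\text{loc}}_{h,h}(n)\gtrsim\varepsilon$. The construction places the $n$ points on the moment curve, so the realized classifications of the ordered points are exactly the sign patterns with at most $\simeq d$ sign changes, i.e.\ unions of $k:=\lfloor d/2\rfloor$ or fewer intervals; one takes $f$ to be the all-$(-1)$ labelling, cuts the ordered points into $k$ consecutive blocks of $\simeq n/k$ points, and lets each classifier in the family place a single $+1$-interval of a fixed length $\ell\simeq\varepsilon/(hk)$ at one of $q\simeq nh/\varepsilon$ grid positions inside each block. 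A block-position vector $(p_1,\dots,p_k)$ then determines a linear separator at Hamming distance $k\ell\le\varepsilon/h$ from $f$, and two such separators differ by $2\ell$ times the number of blocks in which the position vectors disagree; hence requiring the position vectors to form a code of length $k$ over an alphabet of size $q$ with minimum distance $\simeq\max(hk,1)$ — a Reed--Solomon code when $q\ge k$ — yields $\gtrsim q^{c d}$ such separators, pairwise $\varepsilon/2$-separated, inside the ball, for an absolute constant $c>0$. Balancing $h\varepsilon\simeq cd\log q$ with $q\simeq nh/\varepsilon$ produces $\varepsilon\simeq\frac{d}{h}\log(nh^2/d)$, and the specialization $h=1$ gives $\gamma^{\text{loc}}_{1,1}(n)\gtrsim d\log(n/d)$.

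The main obstacle is the lower bound, and specifically that the local packing we need must be \emph{realizable by linear separators}: a constant-weight code of the right parameters exists for purely combinatorial reasons, but halfspaces are far more rigid, so one must work with a structured family and verify both the packing property and that the whole family sits inside a single Hamming ball — this is why the interval-on-moment-curve plus error-correcting-code device is used. The delicate point is making this deliver the full $\frac{d}{h}\log(nh^2/d)$ \emph{uniformly} in $h\in(\sqrt{d/n},1]$: the Reed--Solomon step needs the alphabet size $q\simeq nh/\varepsilon\simeq \frac{nh^2/d}{\log(nh^2/d)}$ to be at least the code length $k\simeq d$, which fails in the intermediate regime where $nh^2/d$ is large but not polynomially large in $d$; bridging that regime seems to require either combining several such constructions or exploiting the self-similarity of the local geometry of halfspaces — the points near the boundary hyperplane of a fixed halfspace again carry a halfspace structure in one lower dimension — and unrolling this recursion $d$ times to recover the $d$-fold logarithmic factor.
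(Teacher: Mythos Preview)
Your upper-bound argument is the same as the paper's: both invoke Proposition~\ref{fixedcontrol} with $\mathbf{s}=\infty$ and simplify $\log(nh^2/d)+\log(1/h)$ to $\log(nh/d)$.

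For the lower bound, however, the paper takes a completely different and much shorter route. Rather than constructing a local packing directly, it exploits the already-established upper bound \eqref{expectbound} in the reverse direction: since $\frac{\gamma^{\text{loc}}_{h,h}(n)}{n}$ upper-bounds $\sup_{P\in\mathcal{P}(h,\F)}\E(R(\hat f)-R(f^*))$ for every ERM $\hat f$, any known lower bound on the latter is automatically a lower bound on $\gamma^{\text{loc}}_{h,h}(n)$. The paper then places the $n$ points at the vertices of a cyclic polytope, so that $\F$ contains the class $\F_{d/2}$ of all classifiers with at most $\lfloor d/2\rfloor$ ones, and invokes two off-the-shelf ERM lower bounds for that class: Massart--N\'ed\'elec's bound $\frac{(1-h)d\log(nh^2/d)}{nh}$ for $h>\sqrt{d/n}$, and (to absorb the troublesome $(1-h)$ factor when $h$ is near $1$) the realizable-case ERM lower bound $\frac{d\log(n/d)}{n}$ of Auer--Ortner/Simon, combined with the monotonicity $\gamma^{\text{loc}}_{1,1}(n)\le\gamma^{\text{loc}}_{h,h}(n)$. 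Taking the maximum of these two gives $\frac{d\log(nh^2/d)}{h}$ uniformly in $h$.

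This indirect trick entirely sidesteps the coding-theoretic obstruction you identified: there is no need to produce the packing explicitly, because its existence is already encoded in the minimax lower bounds. Your direct construction, if completed, would be more self-contained (not leaning on \cite{Massart06,Auer07,Simon15}), and the interval-on-moment-curve plus Reed--Solomon idea is sound in the regimes where $q\ge k$; but the intermediate-$h$ gap you flag is real, and your suggested fixes (patching regimes, or a dimension-reduction recursion) are only sketched. The paper's approach buys a two-line proof at the price of quoting external results; yours would buy an explicit witness to the packing at the price of a delicate case analysis you have not yet carried out.
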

\begin{proof}
The upper bound follows directly from the Theorem \ref{mainupperbound}.
At first we select a special set of points $x_{1}, \ldots, x_{n} \in \mathbb{R}^{d}$. It is known (Theorem $6.5$ in \cite{Edels87}) that in $\mathbb{R}^{d}$ there exists a so called \emph{cyclic polytope} with $n$ vertices, such that it has exactly $n \choose k$\  $(k - 1)$-dimensional faces for any $k \le \lfloor\frac{d}{2}\rfloor$. We choose $x_{1}, \ldots, x_{n}$, such that $x_{i}$ is a vertex of the cyclic polytope. We fix any linear separator $f_{1}$ such that all $x_{i}, \ldots ,x_{n}$ are in the same half-space with respect to this linear separator. Without loss of generality we may assume that $f_{1}(x_{1}) = \ldots = f_{1}(x_{n}) = -1$. In this notation using the property of cyclic polytopes we see that $\F$ contains all classifiers with at most $\lfloor\frac{d}{2}\rfloor$ ones. We denote this set by $\F_{d/2}$. Analysis of this particular set by Massart and N\'ed\'elec (Theorem $5$ in \cite{Massart06}) gives a $\frac{(1 - h)d\log(\frac{nh^2}{d})}{nh}$ lower bound for $R(\hat{f}) - R(f^*)$ provided that $h > \sqrt{\frac{d}{n}}$. From Theorem \ref{mainupperbound} we know that this lower bound is also a lower bound for $\gamma_{h, h}^{\text{loc}}(n)$. Thus $\frac{(1 - h)d\log(\frac{nh^2}{d})}{h} \lesssim \gamma_{h, h}^{\text{loc}}(n)$. Simultaneously, we have $\gamma_{1, 1}^{\text{loc}}(n) \le \gamma_{h, h}^{\text{loc}}(n)$. So, it is enough to lower bound $\gamma_{1, 1}^{\text{loc}}(n)$, which may be derived as a lower bound for ERM in the realizable case. It is known (theorem $6$ in \cite{Simon15}, or theorem $5$ in \cite{Auer07}) that for this particular class $\F_{d/2}$ in the realizable case there exists ERM such that with probability at least $\frac{1}{2}$ we have $\frac{d\log(\frac{n}{d})}{n} \lesssim R(\hat{f})$. This implies that $\frac{d\log(\frac{n}{d})}{n} \lesssim \E R(\hat{f})$ and thus $d\log(\frac{n}{d}) \lesssim \gamma_{1, 1}^{\text{loc}}(n)$. Summerizing, we have $d\log(\frac{n}{d}) \vee \frac{(1 - h)d\log(\frac{nh^2}{d})}{h} \lesssim \gamma_{h, h}^{\text{loc}}(n)$. We finish the proof by noticing that $\frac{d\log\left(\frac{nh^2}{d}\right)}{h} \lesssim d\log(\frac{n}{d}) \vee \frac{(1 - h)d\log(\frac{nh^2}{d})}{h}$.
\end{proof}
We note that the lower bound \eqref{lowerbound} may be applied for both classes. 
\section{Discussion and Open Problems}
Local entropies are well known in statistics since the early work of Le Cam \cite{Lecam73}. Since then local metric entropies have appeared in minimax lower bounds \cite{Yang99,Mendelson15,Lecue13} and in the necessary and sufficient conditions for consistency of ERM estimator in nonparametric regression \cite{Geer96}. Simultaneously, the upper bounds are usually given in terms of global entropies. Interestingly, it is sometimes possible to recover optimal rates by considering only global packings \cite{Yang99,Rakhlin13}. Generally, empirical covering numbers of classes in statistics have two types of behaviour. There are \emph{parametric} and \emph{VC-type} classes where the logarithm of covering numbers scales as $\log(\frac{1}{\varepsilon})$ and expressive \emph{nonparametric classes} where it scales as $\varepsilon^{-p}$ for some $p > 0$. It was proven in \cite{Yang99} that for these expressive nonparametric classes local and global entropies are of the same order. Thus for such classes localization of class does not give any significant improvement and minimax rates are usually obtained using only global entropies \cite{Rakhlin13}. The case of parametric and especially VC-type classes is more delicate and this paper is a first attempt to analyze the last tightly under bounded noise \footnote{We note that for some parametric classes, specifically for a bounded subset of finite dimensional linear space in $L_{2}$, optimal rates were obtained in \cite{Koltch11}}. Our results and examples show that localization of the class is usually needed for VC classes, but definitely not always. Some parametric classes have the features of nonparametric classes: their local entropies are of the same order as their global entropies, and for them bounds in terms of global entropies are essentially optimal. It is not difficult to show that, in the proof of Proposition \ref{linearsep}, we gave an example of such a VC class $\F_{d/2}$. Not surprisingly, Massart and N\'ed\'elec \cite{Massart06} named this class \emph{rich}. This class appears in almost all class-specific lower bounds \cite{Raginsky11, Massart06, Rakhlin13}, which are matched by global upper bounds. In contrast, there are still many interesting classes, for example, threshold classifiers, which are out of the scope of upper bounds based on global entropy.

We should note that a distribution-dependent local entropy has already appeared in the upper bounds in the classification literature under the name of the \emph{doubling dimension}. Given a class of classifiers $\F$ and a probability distribution $P_{X}$, define the doubling dimension by 
\begin{equation}
\label{doublingdimension}
D(\F, \gamma) = \max\limits_{f \in \F}\max\limits_{\varepsilon \ge \gamma}\log(\mathcal{N}(\mathcal{B}_{P_{X}}(f, \varepsilon), \varepsilon/2)),
\end{equation}
where $\mathcal{B}_{P_{X}}(f, \varepsilon) = \{g \in \F| P_{X}(f(X) \neq g(X)) \le \varepsilon\}$ and $\mathcal{N}(\G, \varepsilon)$ is the $\varepsilon$-covering number of $\G$ with respect to the pseudo-metric $P_{X}(g(X) \neq g'(X))$. It was proved by Bshouty, Li, and Long \cite{Bshouty09} that in the realizable case, for any $\varepsilon > 0$, if
\[
n \gtrsim \frac{d + D(\F, \varepsilon_{0})}{\varepsilon}\sqrt{\log\left(\frac{1}{\varepsilon}\right)} + \frac{\log(\frac{1}{\delta})}{\varepsilon},
\]
then with probability at least $1 - \delta$, for any ERM $\hat{f}$ we have $R(\hat{f}) \le \varepsilon$. Here $\varepsilon_{0} = \varepsilon\exp\left(-\sqrt{\log(\frac{1}{\varepsilon})}\right)$.
It is easy to show that when considering the distribution-free setting, this bound is weaker than ours at least because it contains a square root of an extra logarithmic factor. The following simple inequality compares distribution-free doubling dimension and the local empirical entropy.
For any $\gamma \in \mathbb{N}$,
\begin{equation}
\label{eqn:local-empirical-vs-doubling}
\log(\mathcal{M}^{\text{loc}}_{1}(\F, \gamma, n, 1)) \le 2\sup\limits_{P_{X}}D(\F, \gamma/n).
\end{equation}
To prove this inequality one may consider the uniform probability measure $P_{X}$ on the $n$ points maximizing the local packing number on the left hand side, in which case the pseudo-metric $P_{X}(g(X) \neq g'(X))$ is merely $1/n$ times the Hamming distance of the projections to these $n$ points.  The constant $2$ appears simply due to the fact that empirical local entropies involve packing numbers while the doubling dimension involves covering numbers.  Bshouty, Li, and Long \cite{Bshouty09} also study a non-ERM distribution-dependent learning algorithm in the realizable case, and obtain an error rate guarantee essentially bounded by a fixed point $\varepsilon \approx \frac{D(\F,\varepsilon/4)}{n} + \frac{\log(\frac{1}{\delta})}{n}$, with probability at least $1-\delta$.  In light of  \eqref{eqn:local-empirical-vs-doubling}, we see that in the worst case over distributions this is essentially no better than our Theorem~\ref{mainupperbound} (with $h=1$), which holds for the much-simpler learning algorithm ERM.

We note that questions similar to ours have been considered recently by Mendelson~\cite{Mendelson15} and by Lecu\'e and Mendelson \cite{Lecue13}. Both papers introduce distribution dependent fixed points of local entropies and show that in the convex regression setup for subgaussian classes they give optimal upper and lower bounds. However, the direct comparison with their results is problematic due to the fact that in the VC case we do not have convexity assumptions: they are replaced by noise assumptions and specifically used by our approach. Moreover, since in the realizable case ERM is not minimax optimal, it can be easily seen from our results that there may not exist a lower bound in terms of fixed points of the local empirical entropy in this case.

We have compared our bound with some of the best known relaxations of the bounds based on local Rademacher processes \eqref{koltchbound}. However, the title of our paper demands also a direct comparison with the bounds based \emph{solely} on local Rademacher complexities. For this, we need the following result.\begin{theorem}[Sudakov minoration for Bernoulli process \cite{Talagrand14}]
Let $V \subset \mathbb{R}^n$ be a finite set such that for any $v_{1}, v_{2} \in V$ if $v_{1} \neq v_{2}$ then $\|v_1 - v_2\|_{2} \ge a$ for some $a > 0$ and for any $v \in V$ it holds $\|v\|_{\infty} \le b$ for some $b > 0$. Then
\begin{equation}
\label{minoration}
\E_{\varepsilon} \sup\limits_{v \in V}\sum\limits_{i = 1}^{n}\varepsilon_{i}v_{i} \gtrsim a\sqrt{\log{|V|}} \wedge \frac{a^2}{b}.
\end{equation}
\end{theorem}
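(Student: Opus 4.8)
The plan is to deduce this from the classical \emph{Gaussian} Sudakov minoration by a two-regime analysis, the point being that a Rademacher sum $\sum_i\varepsilon_iw_i$ with $\|w\|_\infty$ small behaves, at the scale $\sqrt{\log|V|}$, like a centered Gaussian of the same variance. By rescaling $V$ one may assume $a=1$; writing $N=|V|$, the target becomes $\E_\varepsilon\sup_{v\in V}\sum_i\varepsilon_iv_i\gtrsim\sqrt{\log N}\wedge\tfrac1b$. When $b$ is bounded below by a fixed absolute constant the bound is immediate: for distinct $v,v'\in V$, Khintchine's inequality gives $\E_\varepsilon\sup_{v\in V}\sum_i\varepsilon_iv_i\ge\tfrac12\E_\varepsilon|\sum_i\varepsilon_i(v_i-v'_i)|\ge\tfrac{1}{2\sqrt2}\|v-v'\|_2\ge\tfrac{1}{2\sqrt2}$, which is $\gtrsim1/b$ since $1/b\lesssim1$ there; more generally, the same remark dispatches every case in which $\sqrt{\log N}\wedge\tfrac1b$ is itself bounded by an absolute constant. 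So the real content is a \emph{Gaussian-regime lemma}: if moreover $b\le c_0/\sqrt{\log N}$ for a small absolute constant $c_0$, then $\E_\varepsilon\sup_{v\in V}\sum_i\varepsilon_iv_i\gtrsim\sqrt{\log N}$.

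Granting the lemma, the intermediate range $c_0/\sqrt{\log N}\le b\le c_0/\sqrt2$ follows by restriction: the hypothesis forces $\log N\ge c_0^2/b^2\ge2$, so one may pick a subfamily $V'\subseteq V$ with $\log|V'|=\lfloor c_0^2/b^2\rfloor$ (hence $|V'|\le N$); being a subfamily, $V'$ is still $1$-separated with $\ell_\infty$-radius $\le b$, and $b\le c_0/\sqrt{\log|V'|}$, so the lemma applied to $V'$ gives $\E_\varepsilon\sup_{v\in V}\sum_i\varepsilon_iv_i\ge\E_\varepsilon\sup_{v\in V'}\sum_i\varepsilon_iv_i\gtrsim\sqrt{\log|V'|}\simeq1/b$. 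Together with the trivial case ($b\ge c_0/\sqrt2$) and the lemma itself ($b\le c_0/\sqrt{\log N}$, where $\sqrt{\log N}\wedge\tfrac1b=\sqrt{\log N}$), this gives the theorem.

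For the lemma I would first normalize: replacing each $v\in V$ by $v-v_0$ for a fixed $v_0\in V$ changes neither the left-hand side (as $\E_\varepsilon\sum_i\varepsilon_i(v_0)_i=0$) nor the pairwise separation, and at most doubles the $\ell_\infty$-radius, so I may assume $0\in V$, $\|v\|_2\ge1$ for $v\ne0$, and $\|v\|_\infty\le2b$. Two tail estimates are then needed. The upper one is the routine sub-Gaussian bound $\Prob(\sum_i\varepsilon_iw_i\ge t)\le\exp(-t^2/(2\|w\|_2^2))$. The lower (anti-concentration) one states that there are absolute constants $c_1,c_2,c_3>0$ with $\Prob(\sum_i\varepsilon_iw_i\ge t)\ge c_2\exp(-c_3t^2)$ whenever $\|w\|_2=1$, $\|w\|_\infty\le\beta$ and $1\le t\le c_1/\beta$; this is a routine exponential-tilting computation (Cram\'er's change of measure at rate $\lambda=t$, under which $\sum_i\varepsilon_iw_i$ has mean $\asymp t$ and variance $\asymp1$ because $\lambda\beta$ stays small, combined with $\E\exp(t\sum_i\varepsilon_iw_i)\le\exp(t^2/2)$), and rescaling by $\|v-v'\|_2\ge1$ transfers it to the increments $v-v'$ with $\beta=2b$. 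Since $b\le c_0/\sqrt{\log N}$, the admissible range of $t$ includes $t\asymp\sqrt{\log N}$, so one has two-sided sub-Gaussian control of all increments at that scale, and the classical Sudakov minoration argument then yields $\E_\varepsilon\sup_{v\in V}\sum_i\varepsilon_iv_i\gtrsim\sqrt{\log N}$.

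The step I expect to be the main obstacle is this last one --- converting two-sided \emph{pairwise} tail control into a lower bound on the \emph{expected maximum}. Naive union or second-moment arguments fail (already for Gaussians): the joint upper tail of $\langle\varepsilon,v-v_0\rangle$ and $\langle\varepsilon,v'-v_0\rangle$ is not small enough when the two increments are nearly parallel, and for a Rademacher process it is further worsened by the uncontrolled $\|v+v'-2v_0\|_2$. This is exactly where the Gaussian proof uses interpolation (Slepian / Sudakov--Fernique), a tool with no direct Rademacher analogue; in the bounded-coordinate regime the substitute is the comparison argument underlying the Bernoulli minoration in \cite{Talagrand14}, which may be summarized as: restrict to a subset of coordinates on which the family stays well separated and has controlled aspect ratio, and on it run a Gaussian-type argument (or, equivalently, recursively extract well-separated subfamilies at geometrically decreasing scales). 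Carrying out this comparison rigorously --- not any of the preliminary tail estimates --- is where the real work lies.
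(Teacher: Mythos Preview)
The paper does not prove this theorem at all: it is quoted verbatim as a known result from Talagrand's monograph \cite{Talagrand14} and is used only as a black box in the discussion section. There is therefore no ``paper's own proof'' to compare your proposal against.

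As for the proposal itself: the reductions you give (rescaling to $a=1$, the two-point Khintchine argument for the trivial regime, the subfamily-restriction trick for the intermediate regime, and the recentering $v\mapsto v-v_0$) are all correct and standard. The anti-concentration estimate via exponential tilting is also fine in the stated range. But, as you yourself say, the entire substance of the theorem lies in the step you have \emph{not} carried out --- turning two-sided pairwise sub-Gaussian tails into a lower bound on the expected supremum in the absence of a Slepian-type comparison. Your closing paragraph is an honest description of the situation rather than a proof: you defer to ``the comparison argument underlying the Bernoulli minoration in \cite{Talagrand14}'', which is precisely the theorem being stated. So what you have written is a correct outline of why the result is plausible and of how the easy regimes reduce to the hard one, but it is not a proof; it is circular at the one point that matters.
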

For simplicity we will consider only the realizable case, and distribution-free setting. However we note that similar arguments will also work under bounded noise and general distributions $P_{X}$. Fix a sample $x_{1}, \ldots, x_{n}$. Applying Corollary $5.1$ from \cite{Bartlett05} we have
\[
\E R(\hat{f}) \lesssim \sup\limits_{x_{1}, \ldots, x_{n}}r^{*}, 
\]
where $r^{*}$ is a fixed point of the local empirical Rademacher complexity, that is a solution of the following equality
\[
\frac{1}{n}\E_{\varepsilon} \sup\limits_{g \in \mstar(\G_{f^{*}}), P_{n}g \le 2r}\sum\limits_{i = 1}^{n}\varepsilon_{i}g(x_{i}) = r, 
\]
where $\mstar(\G)$ denotes the \emph{star-hull} of a class $\G$: that is, the class of functions $\alpha g$, where $g \in \G$ and $\alpha \in [0, 1]$. Since $\mstar(\G_{f^{*}})$ is star-shaped, it can be simply proven (see appropriate discussions in \cite{Mendelson15}) that local empirical entropies are not increasing in its radius. Using this fact together with \eqref{minoration} it can be shown 
\[
\E_{\varepsilon} \sup\limits_{g \in \mstar(\G_{f^{*}}), P_{n}g \le \frac{2\gamma}{n}}\sum\limits_{i = 1}^{n}\varepsilon_{i}g(x_{i}) \gtrsim \sqrt{\gamma}\sqrt{\log(\mathcal{M}^{\text{loc}}_{1}(\F, \gamma, n, 1))} \wedge \gamma.
\] From this it easily follows that $\frac{\gamma_{1, 1}^{\text{loc}}(n)}{n} \lesssim r^*$. Thus our bounds are not generally worse than the bounds based \emph{solely} on local Rademacher complexities. Conceptually we are looking for fixed points of the right hand side of \eqref{minoration}, while Rademacher analysis works directly with the fixed points of the suprema of localized processes. 

There are still interesting questions and possible directions that are out of the scope of this paper:
\begin{enumerate}
\item We are focusing on a distribution-free analysis. At the same time by just leaving the expectations with respect to the learning sample we may simply obtain a distribution-dependent version of Theorem \ref{mainupperbound}. Recently, Balcan and Long \cite{Balcan13} have proven that for some special distributions $P_{X}$, the class of homogenous linear separators admits faster rates of convergence of ERM, compared to worst-case distributions. It may be interesting to generalize our results using distribution-dependent fixed points of the local empirical entropy (based on random data, rather than worst-case data), and specifically to determine whether this yields rates as fast as \cite{Balcan13} under similar conditions on $P_{X}$.
\item Our approach here makes use of shifted processes and offset Rademacher processes, in place of explicit diameter-localization arguments such as used by \cite{Koltch06}.  It seems a natural direction to develop a more general theory of this use to understand the limitations of the approach. For example,  so far our analysis is specific to the well-specified case when $f^{*} \in \F$. It would be interesting to generalize our results to more general noise conditions and a miss-specified case.
\item It is also interesting to refine our bounds in situations when $h$ is close to zero: i.e., when the noise levels are high. It is known \cite{Massart06} that when $h < \sqrt{\frac{d}{n}}$ the control of Rademacher processes based on the \emph{Dudley integral} \cite{Dudley84} give minimax optimal $\sqrt{\frac{d}{n}}$ convergence rate. Moreover, it is known that bounds based on just one covering are suboptimal in this case. If we fix $h = \sqrt{\frac{d}{n}}$, then the bound of Gin\'e and Koltchinskii \eqref{koltchbound} (also based on the Dudley integral) will give us an optimal $\sqrt{\frac{d}{n}}$ rate in expectation. Simultaneously, we know that their bound is suboptimal when $h$ is close to $1$. Due to an extra term $\frac{d\log\left(\frac{1}{h}\right)}{h}$ in \eqref{controloffixedpoint} our bound \eqref{expectbound} can guarantee only a suboptimal $\sqrt{\frac{d}{n}}\log\left(\frac{n}{d}\right)$ rate when $h = \sqrt{\frac{d}{n}}$, but we know that for many other values of $h$ our bound is significantly better. Nonetheless, we believe that there is a transition, continuous in $h$, from the Dudley integral regime when $h < \sqrt{\frac{d}{n}}$ to the regime when the local empirical entropy provides the optimal characterization of the rates obtained by ERM.
\item We have already discussed that ERM may be suboptimal in the realizable case. Thus, when considering minimax optimality there is a third regime, when we have almost no noise. However, since ERM is such a natural and frequently-used method, it remains an interesting question to precisely characterize its risk.
Recall that the case when $h$ is bounded away from $0$ and $1$ is partially covered by our Corollary \ref{coroloptimal}. We hypothesize that in the realizable case (and even in a more general regime when $h$ is close to $1$) our bound \eqref{devbound} also characterizes the \emph{best possible} bound on the risk of the worst-case choice of empirical risk minimizer $\hat{f}$, up to an absolute constant factor. It follows directly from our discussions that our hypothesis is true for the classes presented in Section \ref{examples}. Partial analysis of the complexity of ERM has recently been performed by Hanneke \cite{Hanneke15a}. Specifically, he finds that the correct characterization of the risk of ERM is somewhere between the upper bounds \eqref{Ginecorollary}, \eqref{s-over-d-bound} and a lower bound
\begin{equation}
\label{loose-lb}
R(\hat{f}) - R(f^{*}) \gtrsim \frac{d}{nh} + \frac{\log(nh^2 \wedge \mathbf{s})}{nh} + \frac{\log(\frac{1}{\delta})}{nh},
\end{equation}
holding with probability greater than $1 - \delta$ for a worst-case choice of $P \in \mathcal{P}(h,\F)$ (and worst-case choice of ERM).  We know that in the realizable case, for the class presented in Example \ref{twoclassesexample}, the bound \eqref{loose-lb} is matched. At the same time, for the class of linear separators presented in Section \ref{examples}, this lower bound is not tight. This, in particular, leads to the obvious conclusion that $d$ and $\mathbf{s}$ are also not sufficient to fully characterize the risk of ERM, even in the realizable case.
\end{enumerate}
\section*{Acknowledgement}
The authors would like to thank Sasha Rakhlin for his suggestion to use offset Rademacher processes to analyze binary classification under Tsybakov noise conditions and anonymous reviewers of the short version of this paper for their helpful comments. NZ was supported solely by the Russian Science Foundation grant (project 14-50-00150).

\bibliographystyle{authordate1}

\appendix
\section{Proofs}

We now have the proof of Theorem~\ref{startheorem}.

\begin{proof}[Theorem~\ref{startheorem}]
Let $\dis_{0}$ be a disagreement set of the version space of first $\lfloor n/2 \rfloor$ instances of the learning sample. The random error set will be denoted by $E_{1} = \{x \in \mathcal{X}|\hat{f}(x) \neq f^{*}(x)\}$. 
We continue the notational conventions from the proof of Proposition \ref{coveringbound}.
Also recall that $R_{n}(\hat{f}) = 0$ in this context. 
Using symmetrization Lemma \ref{symmetrization} and Lemma \ref{expectmax} we have for any $c > 0$
\[
\E P(E_{1}) = \E R(\hat{f}) \le \E\sup\limits_{g \in \G_{f^*}}(Pg - (1 + c)P_{n}g) \le \frac{2\left(1 + \frac{c}{2}\right)^2}{c}\frac{\log\left(\mathcal{S}_{\mathcal{F}}\left(n\right)\right)}{n}.
\]
We fix $c = 2$ and prove that for any distribution $\E P(E_{1}) \le \frac{4\log\left(\mathcal{S}_{\mathcal{F}}\left(n\right)\right)}{n}$.
Observe that $E_{1} \subseteq \dis_{0}$. Now we may use that
$
R(\hat{f}) = P(E_{1}|\dis_{0})P(\dis_{0}).
$
Let $\xi = |\dis_{0} \cap \{X_{\lfloor n/2 \rfloor + 1}, \ldots, X_{n}\}|$. Conditionally on the first $\lfloor n/2 \rfloor$ instances $\xi$ has binomial distribution. Expectations with respect to the first and the last parts of the sample will be denoted respectively by $\E$ and $\E'$. Conditionally on $\{X_{1}, \ldots, X_{\lfloor n/2 \rfloor}\}$ we introduce two events
\begin{align*}
&A_{1}: \xi < \frac{nP(\dis_{0})}{4},
\\
&A_{2}: \xi > \frac{3nP(\dis_{0})}{4}.
\end{align*}
Using Chernoff bounds we have $P(A_{j}) \le \exp\left(-\frac{nP(\dis_{0})}{16}\right)$, $j = 1, 2$. Denote $A = A_{1}\cup A_{2}$. Then
\[
\E'P(E_{1}|\dis_{0}) =  \E'\left[P(E_{1}|\dis_{0})\Big|\overline{A}\right]P(\overline{A}) + \E'\left[P(E_{1}|\dis_{0})\Big|A\right]P(A).
\]
For the first term we have
\begin{equation*}
\E'\left[P(E_{1}|\dis_{0})\Big|\overline{A}\right]P(\overline{A})
\le \E'\left[P(E_{1}|\dis_{0})\Big|\overline{A}\right]
\le 
\frac{16\log\left(\mathcal{S}_{\mathcal{F}}\left(\left\lfloor \frac{3nP(\dis_{0})}{4} \right\rfloor\right)\right)}{nP(\dis_{0})}.
\end{equation*}
For the second term multiplied by $P(\dis_{0})$ we have
\begin{align*}
\E'\left[P(E_{1}|\dis_{0})\Big|A\right]P(\dis_{0})P(A) 
& \le 2\E'P(\dis_{0})\exp\left(-\frac{nP(\dis_{0})}{16}\right)
\\
= 2P(\dis_{0})\exp\left(-\frac{nP(\dis_{0})}{16}\right)
& \le
\frac{12}{n}.
\end{align*}
Combining the above we have
\[
\E'P(E_{1}|\dis_{0})P(\dis_{0}) \le \frac{16\log\left(\mathcal{S}_{\mathcal{F}}\left(\left\lfloor \frac{3nP(\dis_{0})}{4} \right\rfloor\right)\right)}{n}
 + \frac{12}{n}.
\]
It easy to see that for all $k, r \in \mathbb{N}$
\[
\left(\mathcal{S}_{\mathcal{F}}(kr)\right)^{\frac{1}{r}} \le \mathcal{S}_{\mathcal{F}}(k).
\]
Therefore, we have

\begin{align*}
\E R(\hat{f}) 
&\le \E\left( \frac{16\log\left(\mathcal{S}_{\mathcal{F}}\left(\left\lfloor \frac{3nP(\dis_{0})}{4} \right\rfloor \right)\right)}{n}  + \frac{12}{n} \right)
\\
&\le \E\frac{16\log\left(\mathcal{S}_{\mathcal{F}}\left(\mathbf{s} \max\left\{1, \left\lceil \frac{3nP(\dis_{0})}{4\mathbf{s}} \right\rceil \right\}\right)\right)}{n} + \frac{12}{n}
\\
&\le
\frac{16\E\max\left\{1, \left\lceil \frac{3nP(\dis_{0})}{4\mathbf{s}} \right\rceil \right\}\log\left(\mathcal{S}_{\mathcal{F}}\left(\mathbf{s}\right)\right)}{n}  + \frac{12}{n} 
\\
&\le
\frac{16\left(1 + \frac{3}{2}\right)\log\left(\mathcal{S}_{\mathcal{F}}\left(\mathbf{s}\right)\right)}{n}  + \frac{12}{n}
=
\frac{40\log\left(\mathcal{S}_{\mathcal{F}}\left(\mathbf{s}\right)\right)}{n}  + \frac{12}{n},
\end{align*}
where the fourth inequality uses \eqref{starlabel}.
The proof of the deviation bound is completely analogous, but slightly more technical. We refer to the proof of Theorem $11$ in \cite{Hanneke15a}, which can be easily generalized to our case.
\end{proof}

We now present the proof of the deviation bound in Proposition~\ref{coveringbound}.

\begin{proof}[Proposition \ref{coveringbound} Deviation Bound]
The proof in deviation is based on the symmetrization Lemma \ref{symmetrizationdev} and 
Corollary \ref{sym}. First we notice that $\mathcal{G}_{f^{*}}$ is a $(1, 1)$-Bernstein class. 
Thus fixing any $c_1, c_2$, such that $0 < c_2 < c_1$ and $t \ge \frac{(1 + c_{2})^2}{nc_{2}}$ 
it is sufficient to control 
$\Prob\left(\sup\limits_{g \in \G_{f^{*}}}\left(\frac{1 + c'/2}{n}\sum\limits_{i = 1}^{n}\varepsilon_{i}g_i - c'P_{n}g/2\right) \ge y\right)$ for some $y > 0$, where $g_i = g(Z_i)$.
Using the same decomposition as in Lemma \ref{expectmaxnew}  we have for a fixed 
$\lambda > 0$, $x > 0$ and $c'' = (1 + c'/2)(1 +\frac{c'}{4(1 + c'/2)})$
\begin{align*}
&\Prob\left(\sup\limits_{g \in \G_{f^{*}}}\left(\frac{1 + c'/2}{n}\sum\limits_{i = 1}^{n}\varepsilon_{i}g_i - c'P_{n}g/2\right) \ge x + \frac{\gamma c''}{n}\right) 
\\
&\le\! \Prob\left(\!\frac{\gamma c''}{n}\! +\! \sup\limits_{g \in \G_{f^{*}}}\!\left(\frac{1\! +\! c'/2}{n}\sum\limits_{i = 1}^{n}\varepsilon_{i}p(g)_i\! -\! c'P_{n}p(g)/2\right)\! \ge\! x\! +\! \frac{\gamma c''}{n}\right) 
\\
&\le \exp(-\lambda{x}n)\E\E_{\varepsilon}\exp\left(\lambda(1\!+\!c'/2)\sup\limits_{g \in \G_{f^{*}}}\!\!\left(\sum\limits_{i = 1}^{n}\varepsilon_{i}p(g)_i \!-\!\frac{c'}{c' + 2}p(g)_i\right)\right),
\end{align*}
where, as in Lemma \ref{expectmaxnew}, the operator $p$ denotes the nearest element in the $\gamma$-covering.
By denoting $c''' = \frac{c'}{c' + 2}$ and $\lambda' = \lambda(1 + c'/2)$ we have
\begin{align*}
&\E_{\varepsilon}\exp\left(\lambda'\sup\limits_{g \in \G_{f^{*}}}\left(\sum\limits_{i = 1}^{n}\varepsilon_{i}p(g)_i - c'''P_{n}p(g)\right)\right) 
\\
&\le \mathcal{M}^{*}_{1}(\F, \gamma, n)\exp\left(\sum\limits_{i = 1}^{n}\left(\frac{(\lambda')^2}{2}p(g)_i - \lambda'c'''p(g)_i\right)\right).
\end{align*}
Setting $\lambda = 2 c''' / (1 + c' / 2)$ so $\lambda' = 2c'''$ we have
\begin{align*}
&\Prob\left(\sup\limits_{g \in \G_{f^{*}}}\left(\frac{1 + c'/2}{n}\sum\limits_{i = 1}^{n}\varepsilon_{i}g_i - c'P_{n}g/2\right) \ge x + \frac{\gamma c''}{n}\right) 
\\
&\le \exp\left(-\frac{4c'xn}{(2 + c')^2}\right)\mathcal{M}^{*}_{1}(\F, \gamma, n).
\end{align*}
We set $x = \frac{(2 + c')^2}{4c'}\left(\frac{\log(\mathcal{M}^{*}_{1}(\F, \gamma, n))}{n} + \frac{\log(\frac{4}{\delta})}{n}\right)$ and choose $c_{1} = 3$ and $c_{2} = 1$. Then with probability at least $1 - \delta$,
\[
\sup\limits_{g \in \G_{f^{*}}}(P - (1 + c_1)P_{n})g \lesssim \frac{\gamma}{n} + \frac{\log(\mathcal{M}^{*}_{1}(\F, \gamma, n))}{n} + \frac{\log(\frac{1}{\delta})}{n}.
\]
We finish the proof by setting $\gamma = \gamma^{*}_{\frac{1}{2}}(n) + 1$. The upper bound \eqref{fixedpointupperbound} easily follows from the general bound on packing numbers for VC classes \cite{Haussler95}.
\end{proof}

Next, we have the proof of Lemma~\ref{localization}.
\begin{proof}[Lemma \ref{localization}]
Once again, given $X_1,\ldots,X_n$, let $V = \{ (g(X_1),\ldots,g(X_n)) : g \in \G \}$ denote the set of vectors corresponding to the values of functions in $\G$.
As above, for a fixed $\gamma$ and fixed minimal $\gamma$-covering subset $\mathcal{N}_{\gamma} \subseteq V$, for each $v \in V$, $p(v)$ will denote the closest vector to $v$ in $\mathcal{N}_{\gamma}$.  We will denote by $\E_{\xi}$ the conditional expectation over the $\xi_i$ variables, given $X_1,\ldots,X_n$.  Note that
\begin{align*}
&\frac{1}{n}\E_{\xi}\max\limits_{v \in V}\left(\sum\limits_{i = 1}^{n}\xi_{i}v_{i}- c|v_{i}|\right)
\\
&\le
\frac{1}{n}\E_{\xi}\max\limits_{v \in V}\left(\sum\limits_{i = 1}^{n}\xi_{i}\!\left(v_{i} \!-\! p(v)_{i}\right)\right) 
\\
&+\frac{1}{n}\E_{\xi}\max\limits_{v \in V}\left(\sum\limits_{i = 1}^{n}\frac{c}{4}|p(v)_{i}| \!-\! c|v_{i}|\right)
\\
&+
\frac{1}{n}\E_{\xi}\max\limits_{v \in V}\left(\sum\limits_{i = 1}^{n}\xi_{i}p(v)_{i} \!-\! \frac{c}{4}|p(v)_{i}|\right).
\end{align*}
The first term is $\lesssim \frac{\gamma}{n}$ by the $\gamma$-cover property and the fact that $|\xi_i| \lesssim 1$.
Furthermore, as in the proof of Lemma~\ref{expectmaxnew}, the second term is at most $\frac{c}{4} \frac{\gamma}{n}$.
Now we analyze the last term carefully.  First we use the standard peeling argument. Given a set $W$ of vectors we define $W[a, b] = \{w \in W|a \le \rho_{H}(w, 0) < b\}$.  
\begin{align*}
&\frac{1}{n}\E_{\xi}\max\limits_{v \in V}\left(\sum\limits_{i = 1}^{n}\xi_{i}p(v)_{i}- \frac{c}{4}|p(v)_{i}|\right) 
\\
&=\frac{1}{n}\E_{\xi}\max\limits_{v \in \mathcal{N}_{\gamma}}\left(\sum\limits_{i = 1}^{n}\xi_{i}v_{i}- \frac{c}{4}|v_{i}|\right) 
\\
&\le \frac{1}{n}\E_{\xi}\max\limits_{v \in \mathcal{N}_{\gamma}[0, 2\gamma/c]}\left(\sum\limits_{i = 1}^{n}\xi_{i}v_{i}- \frac{c}{4}|v_{i}|\right) 
\\
&+ \frac{1}{n}\sum\limits_{k = 1}^{\infty}\E_{\xi}\max\limits_{\mathcal{N}_{\gamma}\left[2^{k}\gamma/c, 2^{k + 1}\gamma/c\right]}\left(\sum\limits_{i = 1}^{n}\xi_{i}v_{i}- \frac{c}{4}|v_{i}|\right)_{+}
\end{align*}
The first term is upper bounded by $\frac{2\log(\mathcal{M}^{\text{loc}}_{1}(\F, \gamma, n,c))}{cn}$ by Lemma~\ref{expectmax} and by noting that 
$|\mathcal{N}_{\gamma}[0, 2\gamma/c]| \leq \mathcal{M}_{1}(\mathcal{B}_{H}(0, (2\gamma)/c,\{X_1,\ldots,X_n\}), (2\gamma)/2) \leq \mathcal{M}^{\text{loc}}_{1}(\F,\gamma,n,c)$.
Now we upper-bound the second term. We start with an arbitrary summand. 
For $\lambda = \frac{c}{8}$, we have

\begin{align*}
&\E_{\xi}\max\limits_{v \in \{0\} \cup \mathcal{N}_{\gamma}\left[2^{k}\gamma/c, 2^{k + 1}\gamma/c\right]}\left(\sum\limits_{i = 1}^{n}\xi_{i}v_{i}- \frac{c}{4}|v_{i}|\right)
\\
&\le \frac{1}{\lambda}\ln\E_{\xi}\max\limits_{v \in \{0\} \cup\mathcal{N}_{\gamma}\left[2^{k}\gamma/c, 2^{k + 1}\gamma/c\right]}\exp\left\{\sum\limits_{i = 1}^{n}\lambda\xi_{i}v_{i} - \frac{\lambda c}{4} |v_{i}|\right\}
\\
&\le
\frac{1}{\lambda}\ln\left(\sum\limits_{v \in \mathcal{N}_{\gamma}\left[2^{k}\gamma/c, 2^{k + 1}\gamma/c\right]}\E_{\xi}\exp\left\{\sum\limits_{i = 1}^{n}\lambda\xi_{i}v_{i} - \frac{\lambda c}{4} |v_{i}|\right\}  + 1\right)
\\
&\le
\frac{1}{\lambda}\ln\left(\left|\mathcal{N}_{\gamma}\left[2^{k}\gamma/c, 2^{k + 1}\gamma/c\right]\right|\exp\left\{2^{k-2}\gamma(4\lambda^2 - \lambda c)/c\right\}  + 1\right)
\\
&\le
\frac{1}{\lambda}\ln\left(\left|\mathcal{N}_{\gamma}\left[0, 2^{k + 1}\gamma/c\right]\right|\exp\left\{2^{k-2}\gamma(4\lambda^2 - \lambda c)/c\right\}  + 1\right)
\\
&\le
\frac{1}{\lambda}\ln\left(\left(\mathcal{M}^{\text{loc}}_{1}(\F, 2\gamma, n, c)\right)^{2^{k + 1}}\exp\left\{2^{k-2}\gamma(4\lambda^2 - \lambda c)/c\right\}  + 1\right).
\end{align*}
Here we used that any minimal covering is also a packing, and $\left|\mathcal{M}_{\gamma}\left[0, 2^{k + 1}\gamma/c\right]\right| \le \left|\mathcal{M}^{\text{loc}}_{1}(\F, 2\gamma, n, c)\right|^{2^{k + 1}}$, where $\mathcal{M}_{\gamma}$ is a $\gamma$-packing (by the arguments as in Lemma $2.2$ in \cite{Mendelson15} and monotonicity of the local entropy \eqref{localentropy} with respect to the radius).
We fix $\gamma = K\gamma^{\text{loc}}_{c, c}(n)$ for some $K > 2$. Observe that local entropy is nonincreasing and $K\gamma^{\text{loc}}_{c, c}(n) > 2\gamma^{\text{loc}}_{c,c}(n) \geq \gamma^{\text{loc}}_{c, c}(n) + 1$. Thus,
\begin{align*}
&\frac{1}{\lambda}\ln\left(\exp\left(2^{k + 1}\log\left(\mathcal{M}^{\text{loc}}_{1}(V, 2K\gamma^{\text{loc}}_{c, c}(n), n, c)\right)\! +\! 2^{k-2}K\gamma^{\text{loc}}_{c, c}(n)(4\lambda^2 - \lambda c)/c\right)\! +\! 1\right)
\\
&\le\frac{1}{\lambda}\ln\left(\exp\left(2^{k + 1}c(\gamma^{\text{loc}}_{c, c}(n) + 1) + 2^{k-2}K\gamma^{\text{loc}}_{c, c}(n)(4\lambda^2 - \lambda c)/c\right)  + 1\right).
\end{align*}
Then we have
\begin{align*}
&\sum\limits_{k = 1}^{\infty}\frac{8}{c}\ln\left(\exp\left(2^{k + 1}\log\left(\mathcal{M}^{\text{loc}}_{1}(\G, 2K\gamma^{\text{loc}}_{c, c}(n), n, c)\right)\right)\exp\left(-2^{k - 6}Kc\gamma^{\text{loc}}_{c, c}(n)\right)+ 1\right)
\\
&\le \sum\limits_{k = 1}^{\infty}\frac{8}{c}\ln\left(\exp\left(2^{k + 2}c\gamma^{\text{loc}}_{c, c}(n) -2^{k - 6}Kc\gamma^{\text{loc}}_{c, c}(n)\right)+ 1\right).
\end{align*}
We set $K = 2^{9}$ and have 
$\sum\limits_{k = 1}^{\infty}\ln\left(\exp\left(2^{k + 2}c\gamma^{\text{loc}}_{c, c}(n) -2^{k - 6}Kc\gamma^{\text{loc}}_{c, c}(n)\right)+ 1\right) \le C,
$
where $C > 0$ is an absolute constant. Here we used that $\ln(x + 1) \le x$ for $x > 0$ and $c\gamma^{\text{loc}}_{c, c} \gtrsim 1$.
Finally, we have
\begin{equation*}
\frac{1}{n}\E_{\xi}\max\limits_{v \in V}\left(\sum\limits_{i = 1}^{n}\xi_{i}v_{i}- c|v_{i}|\right)
\!\lesssim\!
\frac{\gamma^{\text{loc}}_{c, c}(n)}{n}
\!+\!
\frac{\log(\mathcal{M}^{\text{loc}}_{1}(\F, \gamma^{\text{loc}}_{c, c}(n), n, c))}{cn}
\!+\!
\frac{1}{cn} \!\lesssim\! \frac{\gamma^{\text{loc}}_{c, c}(n)}{n}.
\end{equation*}
\end{proof}

Now we present the proof of the deviation bound in Theorem~\ref{mainupperbound}.

\begin{proof}[Theorem \ref{mainupperbound} Deviation Bound]
We will provide a detailed outline of the proof. This proof technically repeats the arguments from our previous results. The constants will be denoted by $c_{i}$ for $i \in \mathbb{N}$. The idea is to combine the technique we previously used for Theorem \ref{mainupperbound} in expectation with the symmetrization lemma (Lemma \ref{symmetrizationdev}). Once again, let $\hat{f}$ be any ERM and $\hat{g}$ be a corresponding function in the excess loss class $\G_{\mathcal{Y}}$. We have $R(\hat{f}) - R(f^{*}) = P\hat{g}$ and $P_{n}\hat{g} \le 0$. Then for any $c > 0$
\[
R(\hat{f}) - R(f^{*}) \le P\hat{g} - (1 + c)P_{n}\hat{g} \le \sup\limits_{g \in \G_{\mathcal{Y}}}(Pg - (1+c)P_{n}g).
\]
Now due to the fact that $\G_{\mathcal{Y}}$ is a $(\frac{1}{h}, 1)$-Bernstein class we have, using Lemma \ref{symmetrizationdev},
\[
\Prob\left(\sup\limits_{g \in \G_{\mathcal{Y}}}(P - (1 + c_1)P_{n})g \ge t\right) \le 2\Prob\left(\sup\limits_{g \in \G_{\mathcal{Y}}}((1\! +\! c_{2})P'_{n}\! -\! (1\! +\! c_{1})P_{n})g\! \ge\! t/2\right),
\]
provided that  $0 < c_2 < c_1 $ and $t \ge \frac{1}{nh}\frac{(1 + c_{2})^{2}}{c_{2}}$. Now we use the same argument as in the proof of Proposition \ref{coveringbound}. Specifically, to control the deviation of the value $\sup\limits_{g \in \G_{\mathcal{Y}}}(P'_{n} - (1 + c_3)P_{n})g$ it is enough to control the deviation of
\begin{equation}
\label{devterm}
\sup\limits_{g \in \G_{\mathcal{Y}}}\left(\frac{1}{n}\sum\limits_{i = 1}^{n}\varepsilon_{i}g(X_{i}, Y_{i}) - c_{4}P_{n}g\right).
\end{equation}
Now we use the second claim of Lemma \ref{contraction}. To control \eqref{devterm} it is enough to upper bound 
\begin{align*}
&\Prob_{\varepsilon}\left(\sup\limits_{f'\in \F^*}\left(\sum\limits_{i = 1}^{n}\varepsilon_{i}f'(X_i) - \frac{1}{2}hc_4|f'(X_i)|\right) \ge \frac{x}{2}\right)
\\
&\quad+ \Prob_{\xi}\left(\sup\limits_{g'\in \G_{f^*}}\left(\sum\limits_{i = 1}^{n}\xi_{i}g'(X_i) - \frac{1}{3}hg'(X_i)\right) \ge \frac{x}{3c_4}\right).
\end{align*}
Both summands are analyzed similarly. We proceed with the second one. Fix $\gamma \in \mathbb{N}$ and use the decomposition as in the beginning of the proof of Lemma \ref{localization}. Now the problem is reduced to the analysis of a $\gamma$-covering as before:
\begin{align*}
&\sup\limits_{g' \in\mathcal{G}_{f^{*}}}\left(\sum\limits_{i = 1}^{n}\xi_{i}g'(X_{i}) - \frac{h}{3}g'(X_i)\right) 
\\
&\le \gamma\left(1 + \frac{h}{12}\right)
\!+\!
\sup\limits_{g' \in\mathcal{G}_{f^{*}}}\left(\sum\limits_{i = 1}^{n}\xi_{i}p(g'(X_{i})) - \frac{h}{12}p(g'(X_i))\right). 
\end{align*}
The first term is deterministic. The concentration of the last term is given by a combination of Chernoff bound (as in Proposition \ref{coveringbound}) and an upper bound for the exponential moment of 
\[
\sup\limits_{g' \in\mathcal{G}_{f^{*}}}\left(\sum\limits_{i = 1}^{n}\xi_{i}p(g'(X_{i})) - \frac{h}{12}p(g'(X_i))\right)
\]
from the proof of Lemma \ref{localization}.
\end{proof}
\end{document}